\documentclass[12pt]{amsart}
\usepackage{fullpage,graphicx,amsfonts,amssymb,amsmath,amsthm,xcolor,float,subcaption}
\usepackage[all]{xy}
\usepackage[export]{adjustbox} 

\theoremstyle{plain} 
\newtheorem{theorem}    {Theorem}
\newtheorem{theoremletter}{Theorem}

\newtheorem{conjectureletter}[theoremletter]{Conjecture}
\newtheorem{lemma}      [theorem]{Lemma}
\newtheorem{corollary}  [theorem]{Corollary}
\newtheorem{proposition}[theorem]{Proposition}

\newtheorem{conjecture} [theorem]{Conjecture}

\theoremstyle{definition}

\theoremstyle{remark}
\newtheorem{remark}              {Remark}

\numberwithin{equation}{section}

\newenvironment{spmatrix}{\left(\begin{smallmatrix}}{\end{smallmatrix}\right)}

\newcommand{\w}{\operatorname{w}}
\newcommand{\BP}{\operatorname{BP}}
\newcommand{\Z}{\mathbb Z}

\newcommand{\h}{\mathfrak{H}}
\newcommand{\SL}{\operatorname{SL}}
\newcommand{\Res}{\operatorname{Res}}
\renewcommand{\Re}{\operatorname{Re}}
\renewcommand{\Im}{\operatorname{Im}}

\title{The variation of zeros of the Miller basis}
\author{Liubomir Chiriac\and Andrei Jorza}
\date{}

\begin{document}
\maketitle
\begin{abstract}
We exhibit a connection between the variation of zeros in the Miller basis of modular forms $q^m+O(q^{\ell+1})$ and a logarithmic version $\mathcal{S}_\delta$ of the Szeg\H{o} curve, where $\delta=m/\ell$. When $\delta<0.6194$ we show that all the zeros are on the unit arc for $k\gg 0$, while if $\delta$ is asymptotically close to 1, we show that all the zeros lie on $\mathcal{S}_{\delta}$. In general, we posit that for all $\delta$, the zeros are located on the union of the unit arc and the log Szeg\H{o} curve, obtaining a partial result, and find conjectural thresholds for $m/\ell$ with all zeros on the unit arc, and no zeros on the arc. Finally, we enumerate all algebraic zeros of Miller forms up to $\ell-m\leq 25$. 
\end{abstract}

\section{Introduction}

Denote by $M_k$ the space of all holomorphic modular forms of even weight $k\ge 4$ and level one on the upper half-plane $\h$. For any element of $M_k$ we consider its zeros in the standard fundamental domain $\mathcal{F}$. The study of the location of these zeros was revolutionized by Rankin and Swinnerton-Dyer \cite{rankin-swinnerton-dyer}, who proved that all non-elliptic zeros, i.e., distinct from $i$ and $\rho=e^{2\pi i/3}$, of the Eisenstein series \[ E_k(z)=\frac{1}{2} \sum_{ (c,d)=1} \frac{1}{(cz+d)^k}\] in $\mathcal{F}$ lie on the arc $\mathcal{A}$ of the unit circle joining $i$ and $\rho$. Their elegant approach, which involves approximating $E_k$ by an elementary function on $\mathcal{A}$, continues to serve as a blueprint for most subsequent results on the subject.

Building on this foundational idea, Duke and Jenkins \cite{duke-jenkins} incorporated the circle method as a crucial new component to show that the zeros of certain weakly holomorphic modular forms also lie on the unit circle. Raveh \cite{raveh:miller} recently adapted their proof to study the zeros of certain elements of the Miller basis of $M_k$ denoted by 
\begin{equation}\label{def-Miller-basis}
g_{k,m}=q^m+O\left( q^{\ell +1} \right),\text{ where } \ell+1=\dim M_k, 0\le m\le \ell.
\end{equation}

He showed that for fixed $m\ge 1$, once $\ell$ exceeds a certain linear bound in $m$, the zeros of $g_{k,m}$
lie on $\mathcal{A}$, a property that always holds true for $m=1$. In contrast, when the difference $D=\ell-m$ is fixed, Rudnick \cite{rudnick:faber} used Faber polynomials to show that the zeros of $g_{k,m}$ cluster near vertical lines as $k\to \infty$. 

One of the main motivations for this paper is to provide a framework that encompasses the entire family of Miller basis. Our first result provides a lower-bound for the proportion of non-elliptic zeros on the arc depending on the ratio $\delta=m/\ell$. 

\begin{theoremletter}(Theorem \ref{t:main-arc})\label{tl:main-arc}
As $k\to \infty$ we have that
\[\frac{1}{D}
\left|
\left\{
\parbox{3.3cm}{
\centering
non-elliptic zeros\\
of $g_{k,m}$ on $\mathcal{A}$
}
\right\}
\right|\geq \begin{cases}1&\delta<0.6194\\ 1-2.9832(\delta-0.6194)& 0.6194\leq \delta<0.9546\\ 0&\delta\geq 0.9546.\end{cases}\]

\end{theoremletter}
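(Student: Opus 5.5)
We follow the Rankin--Swinnerton-Dyer/Duke--Jenkins strategy: find an elementary approximation to $g_{k,m}$ on $\mathcal{A}$ and count its sign changes. Write $k=12\ell+k'$ with $k'\in\{4,6,8,10,14\}$. Then $g_{k,m}=\Delta^{\ell}E_{k'}\,F_{D}(j)$, where $F_D$ is a monic polynomial of degree $D=\ell-m$ (up to the elliptic factors coming from $E_{k'}$). We use the Duke--Jenkins generating function
\[\sum_{m} g_{k,m}(\tau)\,q(z)^{m}=\frac{\Delta^{\ell}(\tau)E_{k'}(\tau)}{\Delta^{\ell}(z)E_{k'}(z)}\cdot\frac{E_{14-k'}(z)}{j(z)-j(\tau)}\,,\]
valid for $\Im z$ large. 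Extracting the coefficient of $q(z)^{m}$ expresses $g_{k,m}(\tau)$ as a contour integral over $z$ with $\Im z=A$ fixed. We then shift the contour toward the unit circle and pick up residues at $z=\gamma\tau$, $\gamma\in\SL_2(\Z)$, for those images with $\Im\gamma\tau>A$.

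For $\tau=e^{i\theta}\in\mathcal{A}$ the dominant residues are $z=\tau$ and $z=-1/\tau$. After multiplying by $e^{ik\theta/2}$ they combine into
\[2\cos\!\Big(\tfrac{k\theta}{2}-2\pi m\cos\theta\Big)\]
times a positive factor (the $q^{-m}$ at $z=\tau$ gives $e^{-2\pi i m\tau}$). The remaining contribution is the integral along the lowered contour. Roughly, it is bounded by $e^{2\pi m\Im z}\,|\Delta(z)|^{-\ell}$ relative to $|\Delta(\tau)|^{-\ell}$-type quantities. The key estimate is therefore a comparison:
\[\text{error}\ll\exp\!\big(\ell\,\Phi_\delta(\theta)\big)\times(\text{main term}),\]
where $\Phi_\delta(\theta)$ compares $2\pi\delta\sin\theta-\log|\Delta(e^{i\theta})|$ with the minimum over the contour of $2\pi\delta\Im z-\log|\Delta(z)|$. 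We write $\Phi_\delta$ after normalizing by $|\Delta(z)|(\Im z)^6$, so that $\Delta$ contributes through $\log|\Delta|$. The set where $\Phi_\delta(\theta)<0$ is the region where the cosine dominates, and its boundary should be the log Szeg\H{o} curve $\mathcal{S}_\delta$ meeting $\mathcal{A}$.

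On the subarc $\{\theta:\Phi_\delta(\theta)<-\epsilon\}$ the error is exponentially small in $\ell$. The intermediate value theorem then gives one zero per sign change of the cosine, and the phase $\tfrac{k\theta}{2}-2\pi m\cos\theta$ is monotone in $\theta$. So the count of zeros on that subarc is $\frac{1}{2\pi}$ times the phase change, which is of size $\ell\cdot(\ldots)$. Normalizing by $D=\ell(1-\delta)$ gives a proportion $p(\delta)$. The whole arc is admissible exactly when $\delta<\delta_0$, where $\delta_0$ solves $\max_{\theta}\Phi_{\delta}(\theta)=0$; numerically this should give $\delta_0\approx 0.6194$. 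The total phase change over $\mathcal{A}$ from $\theta=\pi/2$ to $2\pi/3$ is $\tfrac{k\pi}{12}-\pi m=\pi(\ell-m)+O(1)$, which yields all $D$ zeros. Valence formula bookkeeping for the elliptic zeros fixes the $O(1)$.

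For $\delta>\delta_0$ the admissible set shrinks to an arc $[\theta_\delta,2\pi/3]$ (or near $i$; whichever end the computation gives). The proportion is the phase change over it divided by $\pi D$. We then replace this exact function by the stated linear lower bound $1-2.9832(\delta-0.6194)$, checked via convexity or monotonicity of the exact expression, and use the trivial bound $0$ beyond $0.9546$.

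The main obstacle is the uniform error bound on the shifted contour: controlling $|\Delta(z)|^{-\ell}e^{2\pi m\Im z}$ together with the other residues ($\gamma\tau$ with $c\ge 2$) uniformly in $\theta$. This requires a clean description of $\Phi_\delta$ and rigorous numerics, for example interval arithmetic on $\log|\Delta|$ via the product expansion, to certify the constants $0.6194$, $2.9832$ and $0.9546$. We would first try choosing the contour height $A$ depending on $\delta$ to optimize this bound.
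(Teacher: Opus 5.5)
Your outline has the same architecture as the paper's proof: the Duke--Jenkins kernel, residues at $\tau$ and $-1/\tau$, a $|\Delta|$-ratio bound on the shifted contour, sign changes counted on the subarc where the approximation survives, and numerics for the constants. Two steps, however, fail as written.

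\textbf{The sign of the main term.} Since $g_{k,m}=q^m+O(q^{\ell+1})$, the generating function is in $q(z)^{-m}$. Its $z$-part is $E_{14-k'}(z)/\Delta^{\ell+1}(z)$, of weight $2-k$; yours has the wrong weight. So the extraction integrand carries $e^{2\pi i m z}$, the residue at $z=\tau$ is $e^{2\pi i m\tau}$ of modulus $e^{-2\pi m\sin\theta}$, and the main term is $2\cos(k\theta/2+2\pi m\cos\theta)$. This is not cosmetic. With your sign the main term would have size $e^{2\pi m\sin\theta}$ against $e^{2\pi mB}$ on the contour, so larger $m$ would help rather than hurt, and no upper threshold in $\delta$ could arise. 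The correct criterion is
\[2\pi\delta(\sin\theta-B)+\log|\Delta(e^{i\theta})|-\log|\Delta(iB)|<0,\]
from Lemma~\ref{l:IB}; the worst point of the contour is $x=0$ because $q/\Delta$ has nonnegative coefficients. Your phase count $\tfrac{k\pi}{12}-\pi m$ is the one for the correct sign. With that sign, monotonicity of the phase needs $\delta<3/\pi$, which is the hypothesis of Proposition~\ref{p:arc-count}. Also, the good subarc sits at $i$, not at $\rho$. The admissible thresholds decrease from about $3/\pi$ near $i$ to $0.6194$ near $\rho$. So for $\delta>0.6194$ you get $[\pi/2,\theta^*(\delta)]$ and the proportion $\bigl(\tfrac{6}{\pi}\theta^*+2\delta\cos\theta^*-3\bigr)/(1-\delta)$, whose graph lies above the stated line.

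\textbf{The extra residues.} These break your claim that for $\delta<\delta_0$ the error is exponentially small on the whole arc. Images with $c\ge2$ lie below height $1/3$ and never enter. What matters are $1/(1-\tau)$, at height $\tfrac12\cot\tfrac{\theta}{2}\le\tfrac12$, and $-1/(\tau+1)=-\tfrac12+\tfrac{i}{2}\tan\tfrac{\theta}{2}$.
\begin{itemize}
\item The latter merges with $\tau$ and $-1/\tau$ at $\rho$. Its contribution to $\overline{g}_{k,m}$ has modulus $|\tau+1|^{-k}e^{2\pi m(\sin\theta-\frac12\tan\frac{\theta}{2})}$, which equals $1$ at $\theta=2\pi/3$.
\item Putting the contour above this pole forces $B\in(\tfrac12\tan\tfrac{\theta}{2},\sin\theta)$. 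That interval collapses at $\rho$, where $\log|\Delta(\rho)|-\log|\Delta(i\sqrt{3}/2)|\approx0.21>0$ makes the integral bound exponentially large.
\item So no contour height gives an exponentially small error up to $\rho$. Your criterion $\max_\theta\Phi_\delta(\theta)=0$ is then either degenerate (this term has exponent $0$ at $\rho$ for every $\delta$) or omits a term as large as the main term. Yet you need the approximation all the way to $\rho$ to get all $D$ zeros.
\end{itemize}
The repair is that counting only requires $|\overline{g}_{k,m}-2\cos(\cdot)|<2$. With the contour below this pole the integral is $o(1)$, and this residue is at most $1$ for $\delta\le3/\pi$, because $\log u\ge1-1/u$ for $u=|\tau+1|^2\ge1$. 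Numerically, $0.6194$ is, to four digits, what the integral bound gives on the last subarc with $B\approx0.665$, i.e.\ below this pole.

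\textbf{Two smaller points.} A single contour height depending only on $\delta$ is coarser than the paper's separate $B_r$ on each of $1000$ subarcs, each reduced to endpoint checks via monotonicity of $|\Delta(e^{i\theta})|$ and $|ce^{i\theta}+d|$. It matches the paper only if the height optimal at $\theta^*$ works on all of $[\pi/2,\theta^*]$, which you would need to check. And the boundary of the admissible set is not $\mathcal{S}_\delta$: the method stops at $0.6194$, below the conjectural $\delta_{\mathcal{A}}^+=0.6265$.
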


In particular, for sufficiently large weights $k$, all the zeros of $g_{k,m}$ in the fundamental domain lie on the arc $\mathcal{A}$ whenever $\delta<61.94\%$, and $g_{k,m}$ has 
at least one zero on $\mathcal{A}$ as long as $\delta<95.46\%$. These theoretical thresholds align closely with our numerical experiments, which we ran for many $k$ up to $600,000$. A sample of the data is shown in the following table.

\begin{center}
\begin{tabular}{llll}
$k$ & $\ell$ & Not all roots on $\mathcal{A}$ & No roots on
                                 $\mathcal{A}$\\
  \hline
$18000$ & $1500$ & $m\geq 936$ & $m\geq 1433$\\
$18002$ & $1500$ & $m\geq 947$ & $m\geq 1421$\\
$18004$ & $1500$ & $m\geq 942$ & $m\geq 1434$\\
$18006$ & $1500$ & $m\geq 937$ & $m\geq 1421$\\
$18008$ & $1500$ & $m\geq 946$ & $m\geq 1434$\\
$18010$ & $1500$ & $m\geq 943$ & $m\geq 1421$
\end{tabular}
\end{center}

In Section~\ref{Weakly-holom-Miller} we derive a parallel result for the Miller basis of the space of weakly modular forms, complementing the work of Duke and Jenkins.

A central insight of Rudnick's result, as discussed above, is that one can estimate the coefficients of the Faber polynomials associated to the Miller basis when $D$ is fixed. In section 5, we extend the method by allowing $D\to \infty$,  where the analysis becomes substantially more delicate and requires the growth of $D$ to remain sufficiently slow relative to $k$. This refinement reveals a new phenomenon in the asymptotic behavior of the zeros: they converge toward a logarithmic analogue of the classical Szeg\H{o} curve \[ \mathcal{S}=\{z\in \mathbb{C}: |z e^{1-z}|=1\}.\]  Its $x$-intercepts occur at $-W(e^{-1})\approx -0.27846$, where $W$ is the Lambert $W$-function, and $1$, while its $y$-intercepts at $\pm e^{-1}$. 

\begin{figure}[H]
\centering
\includegraphics[scale=0.5]{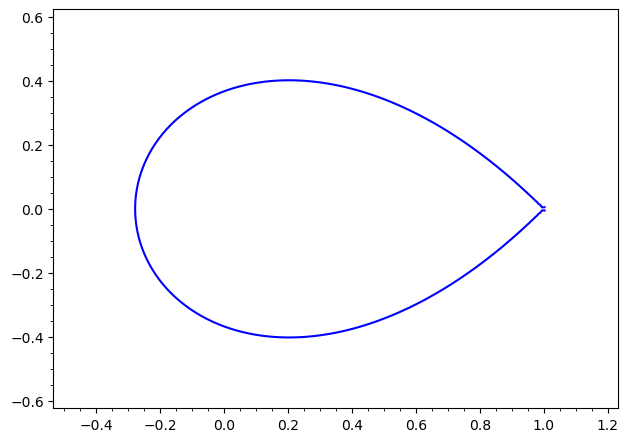}
\caption{The Szeg\H{o} curve}
\end{figure}

The logarithmic Szeg\H{o} curve
$\mathcal{L}_\pm=\{ \tau \in \h: |\Re \tau |\le 1/2, \pm
24e^{2\pi i \tau}\in \mathcal{S}\}$ is the graph of the function
$g_{\pm}:[-0.5,0.5]\to \h$ given by
\[g_\pm(x)=\frac{\ln 24 - \ln u^{-1}(\pm \cos(2\pi x))}{2\pi},\] where $u:[W(e^{-1}),1]\to [-1,1]$ is the bijection $u(x)=(1+\ln x)/x$. 

\begin{theoremletter}(Theorem \ref{t:conj-tinyD}) \label{tl:conj-tinyD}
Suppose $D\to\infty$ such that $D<\frac{\alpha \log |k|}{\log\log |k|}$ for some $\alpha\in (0,1)$.
Then the zeros of $g_{k,m}$ asymptotically approach
$\mathcal{S}_\delta=\mathcal{L}_\pm -\frac{1}{2\pi}\log|1-\delta|$,
where $\delta=\frac{m}{\ell}$ and $\pm$ is the sign of $k$.

\begin{figure}[H]
     \centering
     \begin{subfigure}[b]{0.45\textwidth}
         \centering
         \includegraphics[width=\textwidth]{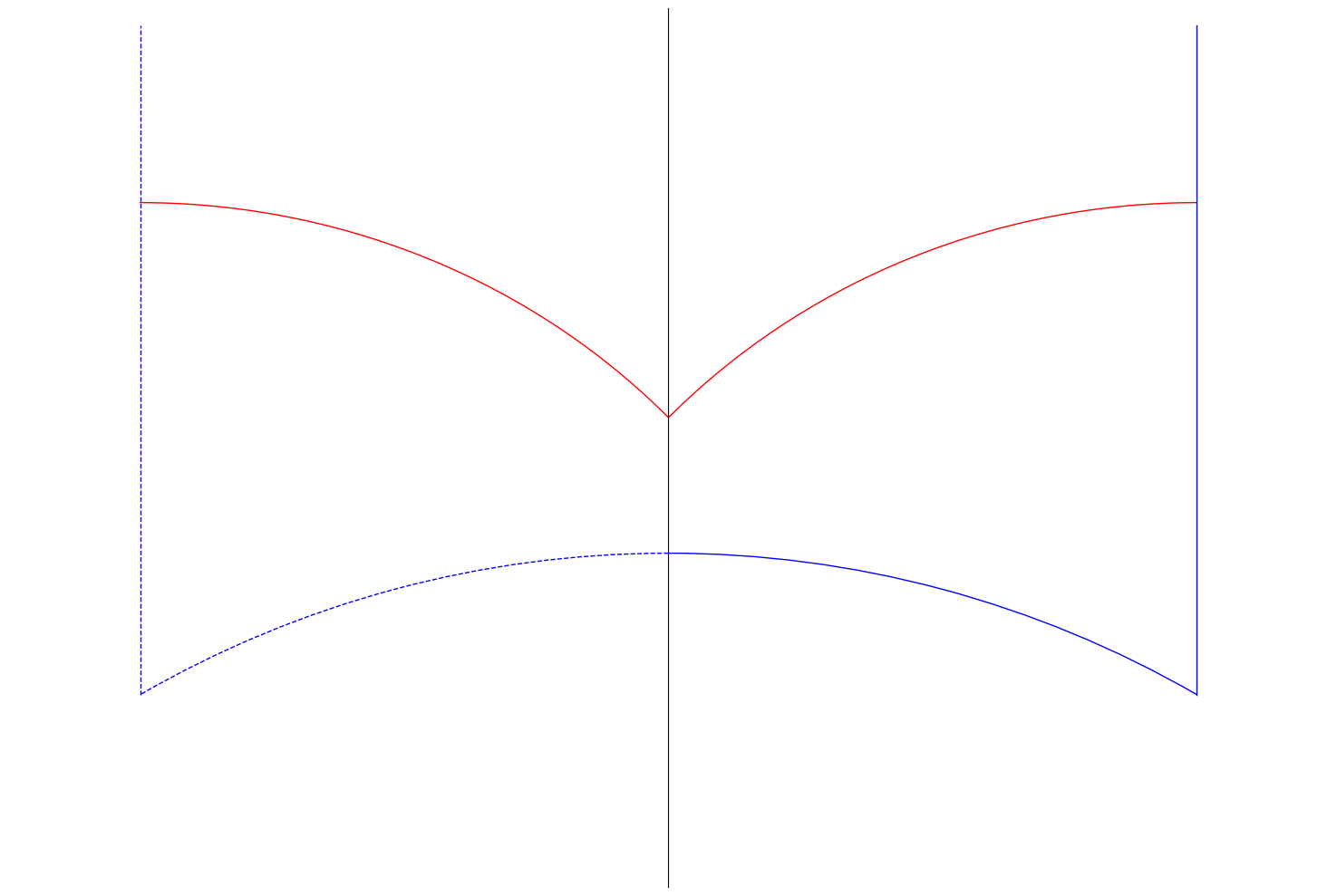}
         \subcaption{$\mathcal{S}_\delta$ when $\delta=0.98$}
         \label{fig:positive_delta}
     \end{subfigure}
     \hfill
     \begin{subfigure}[b]{0.45\textwidth}
         \centering
         \includegraphics[width=\textwidth]{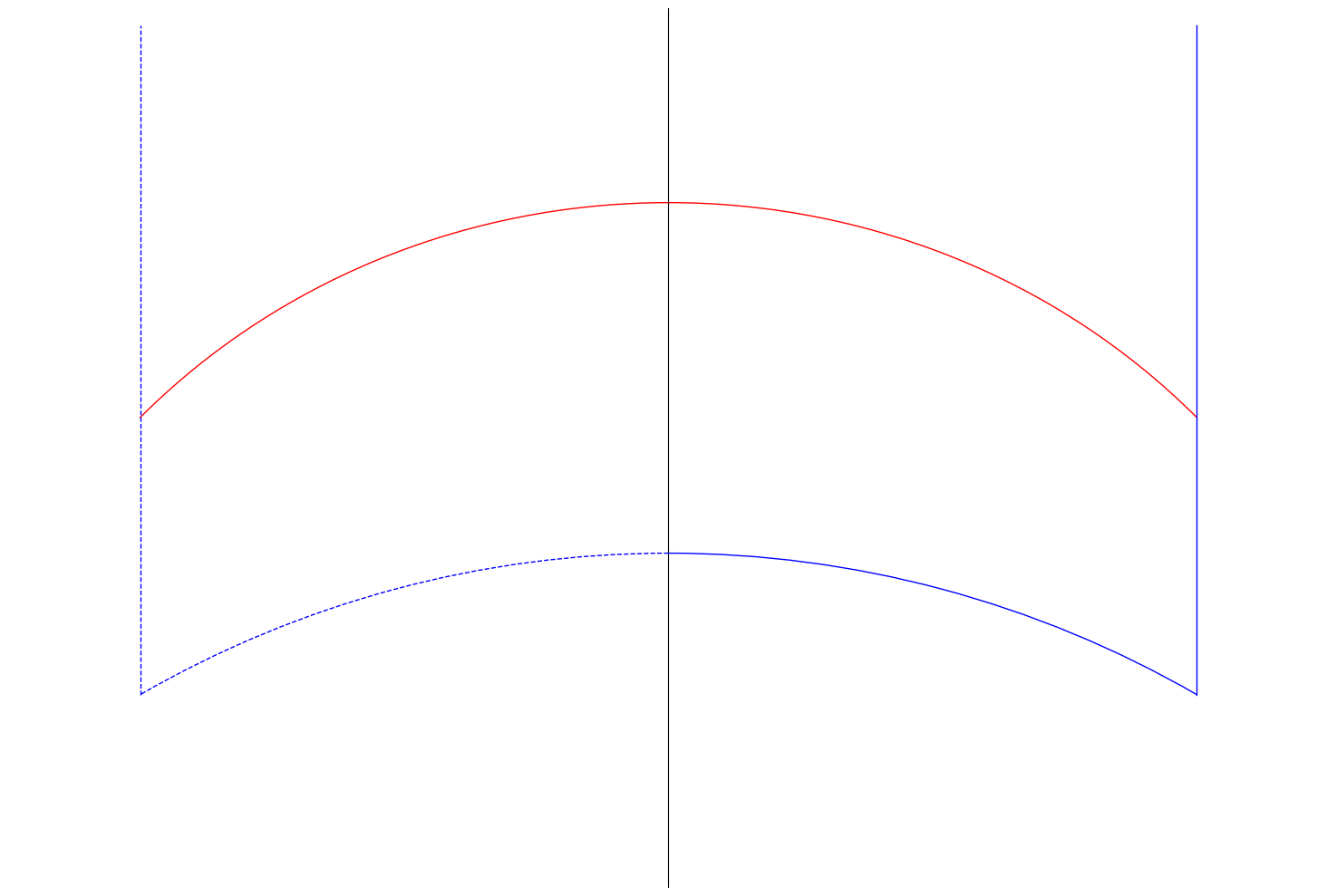}
         \subcaption{$\mathcal{S}_\delta$ when $\delta=1.02$}
         \label{fig:negative_delta}
     \end{subfigure}
     \label{fig:szego_comparison}
     \caption{Asymptotic location of zeros as $\delta\to 1$}
\end{figure}

\end{theoremletter}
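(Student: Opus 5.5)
Write $k=12\ell+k'$ with $k'\in\{0,4,6,8,10,14\}$, so that
\[g_{k,m}=\Delta^{\ell}E_{k'}\,F_D(j),\]
where $F_D$ is a monic polynomial of degree $D$. Equivalently, $g_{k,m}/(\Delta^\ell E_{k'})=q^{m-\ell}+O(q)$ is a polynomial in $j$. This is the Faber-polynomial set-up Rudnick uses for fixed $D$, and the plan is to push it to $D\to\infty$ slowly. The non-elliptic zeros of $g_{k,m}$ in $\mathcal F$ are then the preimages under $j$ of the roots of $F_D$. The statement follows if the rescaled roots of $F_D$ in the variable $w=q^{-1}$ approach a Szeg\H{o}-type curve, since the map $\tau\mapsto e^{2\pi i\tau}$ then carries that curve to $\mathcal{L}_\pm$, shifted by $-\frac1{2\pi}\log|1-\delta|$.

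\textbf{Step 1: the coefficients.} Write $F_D(j)=\sum_{n=0}^D a_n j^{D-n}$. Compare $q$-expansions:
\[\Delta^{-\ell}E_{k'}^{-1}g_{k,m}=q^{-D}+O(q),\qquad j^{D-n}=q^{-(D-n)}(1+744q+\dots)^{D-n}.\]
This shows that the $a_n$ are determined by the principal part of $q^{-D}\,\Delta(q)^{-\ell}q^{\ell}E_{k'}^{-1}$ re-expanded in powers of $j$. The key point is the large-$\ell$ behaviour of $(\prod(1-q^n)^{-24})^{\ell}=\exp\bigl(24\ell\sum\sigma_{-1}(n)q^n\bigr)$. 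Each coefficient of $q^n$ with $n\le D$ is dominated by its leading term $(24\ell)^n/n!$, up to a relative error $O(n^2/\ell)$. Since $q^{-1}=j-744+O(q)$ is a lower-order correction, I expect
\[a_n=(-1)^n\frac{(24c\,\ell)^n}{n!}\bigl(1+O(D^2/\ell)\bigr),\]
where $c$ is an effective constant encoding $\delta$. Matching exponents suggests that the relevant parameter is $\ell-m$ measured against $\ell$, which produces the factor $(1-\delta)$, and hence the shift $\log|1-\delta|$. The sign $\pm$ comes from $k'$ (equivalently, the sign of $k$ in the paper's convention) through the $E_{k'}^{-1}$ factor and the parity of the alternation. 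I would make this precise by writing $a_n$ as a contour integral around $q=0$ and evaluating it by saddle point, uniformly for $n\le D$.

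\textbf{Step 2: reduction to the Szeg\H{o} theorem.} With $a_n\approx (\mp X)^n/n!$, where $X$ is proportional to $D$ times the scale above, we get
\[F_D(j)\approx j^D\, s_D\!\left(\mp X/j\right),\]
with $s_D$ the $D$-th Taylor section of $\exp$. After rescaling $j=DY$, the roots of $s_D(Dz)$ accumulate on $\mathcal S=\{|ze^{1-z}|=1\}$ by Szeg\H{o}'s classical theorem. Moreover, the Buckholtz/Newman--Rivlin estimates give a quantitative distance of order $\log D/D$, which I will need.

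\textbf{Step 3: translating back to $\tau$.} Unwinding $j\approx q^{-1}$ (valid since $|q|$ is small in the relevant region) together with the scaling gives
\[\pm 24\,e^{2\pi i\tau}(1-\delta)^{\pm\cdots}\in\mathcal S\]
up to $o(1)$. This is exactly $\tau\in\mathcal S_\delta$ after taking $\frac{1}{2\pi}\log$ of the modulus.

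\textbf{Main obstacle.} The hard part is the perturbation step: showing that the relative coefficient errors $O(D^2/\ell)$, and the $744$-type corrections, move the roots of the degree-$D$ polynomial by only $o(1)$ in the rescaled variable. Roots of Taylor sections are exponentially sensitive near the arc of $\mathcal S$ where $|e^{-Dz}s_D(Dz)|$ is small. I would first apply Rouch\'e on small discs around the Szeg\H{o} zeros, comparing $F_D$ with $s_D$. The error term has size roughly $D!^{-1}(\text{const}\cdot D)^D D^2/\ell$, which must beat the lower bound $e^{-O(D\log D)}$ for $s_D$ on the circle boundaries. This is precisely where the hypothesis $D<\alpha\log k/\log\log k$, that is $D^{D}\ll k^{\alpha}$, enters.
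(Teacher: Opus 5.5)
The overall architecture (Faber polynomial, truncated exponential, Szeg\H{o}, root perturbation, $j\approx q^{-1}$) is the same as the paper's. There is a genuine error, though: your Step 1 and the rescaling in Steps 2--3 go wrong exactly where the limit curve is decided, so as written the argument does not produce $\mathcal{S}_\delta$.

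First, $g_{k,m}/(\Delta^\ell E_{k'})=q^{-D}h(q)+O(q)$ with $h=(q^{-1}\Delta)^{-\ell}E_{k'}^{-1}$, not $q^{-D}+O(q)$. Carry out your contour-integral idea: take Cauchy's formula for the coefficient of $j^{D-n}$ on a large circle and pull it back to a small circle in $q$. This gives the exact formula
\[
a_n=[q^n]\,h(q)\,J(q)^{-(D-n)}\Bigl(1-\frac{qJ'(q)}{J(q)}\Bigr),\qquad J=qj=1+744q+\cdots.
\]
For $D=1$ this gives $a_1=24\ell-744+2k'/B_{k'}$, as in Section~\ref{Algebraic-zeros}. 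A Cauchy estimate on $|q|\asymp n/|\ell|$ then gives $a_n=\frac{(24\ell)^n}{n!}\bigl(1+O(D^2/|\ell|)\bigr)$, which is Proposition~\ref{p:faber-coefficients}.

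These coefficients do not depend on $\delta$. They alternate exactly when $\ell<0$, because of the factor $\prod(1-q^n)^{-24\ell}$. The factor $E_{k'}^{-1}$ only contributes $1+O(n/|\ell|)$, so $k'$ has nothing to do with the sign.

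The parameter $\delta$ enters only through Szeg\H{o}'s normalization by $D$. The function $j^Ds_D(24\ell/j)$ vanishes where $24\ell/j\approx Dz$ with $z\in\mathcal{S}$, and since $D/\ell=1-\delta$ this reads $\frac{24}{(1-\delta)j}\approx z$. So the right substitution is $j=\frac{24\ell}{D}\cdot\frac1z$, not $j=DY$. With $j=q^{-1}(1+O(q))$ and $|q|\asymp D/|\ell|\to0$, one gets $\frac{24}{1-\delta}e^{2\pi i\tau}\in\mathcal{S}+o(1)$. This is $\mathcal{S}_\delta$ with $\pm=\operatorname{sign}(1-\delta)=\operatorname{sign}(k)$. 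With your $(-1)^n$ for $k>0$ you would land on $\mathcal{L}_-$ rather than $\mathcal{L}_+$, and the exponent of $(1-\delta)$ in your Step 3 is never determined.

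Once this is corrected, your route differs from the paper's at both technical steps, and usefully so.

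For the coefficients, the paper argues by induction through the triangular system $\sum_r y_rc_{D-r,-(D-d)}=\mathcal{D}_{\ell,d}$. That needs Stirling/Laguerre estimates for $\mathcal{D}_{\ell,d}$ and the Brisebarre--Philibert bounds on all coefficients $c_{r,n}$ of $j^r$. The residue formula above absorbs every ``$744$-type'' correction at once: on $|q|\asymp D/|\ell|$ the factor $J^{-(D-n)}(1-qJ'/J)$ is $1+O(D^2/|\ell|)$. So those corrections belong in Step 1, not in your perturbation step.

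For the roots, the paper invokes Ostrowski's theorem. Its $D$-th root $\bigl(\sum|a_n-b_n|\Gamma^{-n}\bigr)^{1/D}\approx|k|^{-\alpha/D}$ is what forces $D\log D\lesssim\log|k|$. Your comparison with Szeg\H{o}'s asymptotics needs more input, namely uniform asymptotics for $s_D(Dz)$ with care near $z=1$. It is simpler not to pair zeros via Rouch\'e but to show the perturbed polynomial has no zeros at distance $\ge\epsilon$ from $\mathcal{S}$.

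There, the true lower bounds are $|s_D(Dz)|\gg e^{D\Re z}$ inside $\mathcal{S}$ and $|s_D(Dz)|\gg|Dz|^D/D!$ outside, not $e^{-O(D\log D)}$. The error is at most $\eta\,s_D(D|z|)$ with $\eta=O(D^2/|\ell|)$. So you only need $e^{O(D)}D^2/|k|\to0$, and the stated hypothesis $D<\alpha\log|k|/\log\log|k|$ is far more than enough.
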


The Szeg\H{o} curve arises naturally as the limiting locus of the zeros of truncated exponential polynomials under an appropriate normalization. A noteworthy aspect of our theorem is that the relevant truncation threshold is governed solely by the ratio $m/\ell$, as in the preceding result. This observation leads us to formulate the following general conjecture.

\begin{conjectureletter}(Conjecture \ref{conj}) \label{cl:conj}
For each $\delta>0$, the zeros of $g_{k,m}$ asymptotically
approach the curve $\mathcal{C}_\delta$, defined as the upper
hull of the union of $\mathcal{A}$ and $\mathcal{S}_\delta$.
\end{conjectureletter}

The cutoffs of $0.6194$ and $0.9546$ in Theorem
\ref{tl:main-arc}, as well as $1.1026$ and $1.1598$ for weakly holomorphic modular forms from Section \ref{Weakly-holom-Miller}, can be recontextualized in light of
Conjecture \ref{cl:conj}, as approximations of the conjectural
cutoffs (see Section \ref{Conject}, particularly Remark \ref{r:conjcutoffs}):
\begin{align*}
  \delta_{\mathcal{A}}^+&=1-\frac{24}{W(e^{-1})e^{\sqrt{3}\pi}}=0.6265\ldots&\delta_{\mathcal{A}}^-&=1+\frac{24}{W(e^{-1})e^{2\pi}}=1.1609\ldots\\
  \delta_{\mathcal{S}}^+&=1-\frac{24}{e^{2\pi}}=0.9551\ldots&\delta_{\mathcal{S}}^-&=1+\frac{24}{e^{\sqrt{3}\pi}}=1.1040\ldots
\end{align*}

Theorems \ref{tl:main-arc} and \ref{tl:conj-tinyD} prove parts of Conjecture \ref{cl:conj}. In general, we have the following partial result:
\begin{theoremletter}[Theorem \ref{t:logk}]
If $k\to \infty$ then all the zeros of the Miller modular forms $g_{k,m}$ lie in the region $\{z\in \mathcal{F}\mid \Im z < \frac{2\log k}{1-c}+O(1)\}$, where $c=\frac{e^{2\pi}}{1728}$.
\end{theoremletter}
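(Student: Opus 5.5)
The plan is to show that high in the cusp the leading term $q^m$ dominates $g_{k,m}$, so $g_{k,m}$ has no zeros there. I will use the standard generating function for the Miller basis (Duke--Jenkins style), or equivalently write $g_{k,m}=\Delta^{m}E_{k'}(z)F_{D}(j(z))$ with $F_D$ a monic polynomial of degree $D=\ell-m$. Here $k'\in\{0,4,6,8,10,14\}$ is the residue weight and $k=12\ell+k'$. The Faber-type polynomial $F_D$ is the $q$-expansion truncation making $E_{k'}\Delta^{m}F_D(j)=q^m+O(q^{\ell+1})$. Thus $F_D(j)=q^{-D}\,(E_{k'}\Delta^m/q^m)^{-1}\big|_{\text{principal part}}$. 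Write $1/(E_{k'}(q)\prod(1-q^n)^{24m})=\sum_{n\ge0}a_n q^n$. Then $F_D(j)$ is characterised by $F_D(j)=\sum_{n=0}^{D}a_nq^{n-D}+O(q)$. Zeros of $g_{k,m}$ in $\mathcal F$ away from $i,\rho$ are exactly zeros of $F_D(j(z))$, so I need to show $F_D(j(z))\ne0$ once $\Im z\ge \frac{2\log k}{1-c}+C$.

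Key step 1 is a coefficient bound. The series $1/(E_{k'}\prod(1-q^n)^{24m})$ is governed by $\prod(1-q^n)^{-24m}$. Its radius of convergence in $q$ is $1$, but a crude bound suffices. I compare with $j$: near the cusp, the relevant size parameter is $|q|^{-1}=e^{2\pi y}$ against $1728$ (the value $j(i)$), which is where $c=e^{2\pi}/1728$ enters. A cleaner route is to write $F_D(j)=j^D+\sum_{r<D}b_r j^r$ and bound the $b_r$. Using $j=q^{-1}+744+\dots$, the coefficients satisfy $|b_{D-s}|\le \binom{24m+s}{s}A^s$ for an explicit constant $A$, obtained by comparing with the expansion of $\prod(1-q^n)^{-24m}$ in powers of $q$. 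I expect $|b_{D-s}|\le (Ck)^{s}\cdot 1728^{s}$-type bounds, since $m\le k/12$.

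Key step 2 is dominance. For $\Im z=y$ large, $|j(z)|\ge e^{2\pi y}-744-O(e^{-2\pi y})$. I then want
\[\sum_{s\ge1}|b_{D-s}||j|^{-s}<1,\]
i.e.\ $|j|$ beats the growth rate of the $b$'s. If $|b_{D-s}|\le (k^{2}K)^{s}$ with something like $K\asymp 1728$, the sum is a geometric series. It is $<1$ once $|j|>2k^2K$, i.e.\ $2\pi y>2\log k+O(1)$. The factor $\frac1{1-c}$ should come from a more careful accounting: the comparison between $|j|$ and $e^{2\pi y}$ uses $j=1728\,E_4^3/(E_4^3-E_6^2)$. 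Bounding the ratio of $1728/|j|$ to $e^{-2\pi y}$ uniformly on $\mathcal F$ gives a constant $c=e^{2\pi}/1728<1$, which amounts to $|j(z)|\ge (1-c)$-weighted exponentials. I would therefore derive the threshold in the form $(1-c)\cdot 2\pi y\ge 2\log k+O(1)$, up to normalisation, and match constants at the end.

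The main obstacle is step 1: getting a coefficient bound for $F_D$ that is uniform in both $m$ and $D$ with polynomial-in-$k$ base (exponent $2$). I would attack it by Cauchy's formula on $F_D(j)=\frac{1}{2\pi i}\oint \frac{g_{k,m}\text{-generating kernel}}{\cdots}$: express the coefficients of $F_D$ as contour integrals over $|q|=r$ of $q^{-D-1}/(E_{k'}\Delta^m q^{-m})$. I would then optimise $r$ against $m$, choosing $r\approx 1/k$ so that $\prod(1-q^n)^{-24m}$ stays bounded by $e^{O(m r)}=O(1)$. This yields $|a_n|\ll k^{n}$ and, after converting from $q$-powers to $j$-powers (another factor of $k$ from the binomial conversion), the $k^{2s}$ growth giving the $2\log k$.
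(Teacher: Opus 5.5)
Your overall architecture (bound the coefficients of the degree-$D$ polynomial $F_D$ in $j$, then show $j^D$ dominates) matches the paper's, and your Step 2 is the same Lagrange-type root bound the paper uses. The gap is in Step 1, at exactly the point you pass over.

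First a correction: the factorization is $g_{k,m}=\Delta^{\ell}E_{k'}F_D(j)$, since $\Delta^{m}E_{k'}F_D(j)$ has the wrong weight. So the series to control is $h(q)=1/(E_{k'}\prod_{n\ge1}(1-q^n)^{24\ell})$, with exponent $24\ell$ rather than $24m$.

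Your Cauchy estimate on $|q|=1/k$ does give $|a_n|\ll k^n$ for the $q$-coefficients of $h$. But the $b_r$ are coefficients in powers of $j$. They come from the $a_n$ by inverting the unipotent triangular matrix of coefficients $c_{r,n}$ of $j^r$, and nothing you wrote controls that inverse. The bound $|b_{D-s}|\le\binom{24m+s}{s}A^s$ ``by comparison with $\prod(1-q^n)^{-24m}$'' is asserted, not derived, and the ``binomial conversion'' is never justified.

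This is not a formality. With only $c_{r,n}\le e^{2\pi n}1728^r$, even the single-step terms $c_{D-i,-(D-d)}$ of the Neumann series are bounded only by $1728^{d-i}c^{-(D-d)}$. That is exponential in $D-d$ rather than in $d-i$, so for $D\asymp k$ and small $d$ it gives nothing polynomial in $k$. This conversion is the entire content of the paper's proof: it uses Brisebarre--Philibert's refined bound for $n\le -rc$ together with the chain optimization in Lemma~\ref{l:BP}. That is where $\frac{1}{1-c}$ comes from. Your explanation via $j=1728E_4^3/(E_4^3-E_6^2)$ is not correct, since $\log|j(z)|=2\pi\Im z+O(1)$ near the cusp, with no factor $1-c$.

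The missing idea is to apply Cauchy's formula in the variable $w=j$ rather than $q$. The map $q\mapsto j=q^{-1}+744+\cdots$ is univalent near $q=0$, so the inverse $q(w)=w^{-1}+744w^{-2}+\cdots$ is holomorphic on $|w|\ge R_0$ for some absolute $R_0$. Since $g_{k,m}q^{-m}=1+O(q^{D+1})$, you have $F_D(j)=q^{-D}h(q)+O(q)$. Hence $F_D(w)-q(w)^{-D}h(q(w))$ is holomorphic at $w=\infty$ and vanishes there, so for $R\ge R_0$
\[
b_{D-s}=\frac{1}{2\pi i}\oint_{|w|=R}q(w)^{-D}\,h(q(w))\,w^{s-D-1}\,dw .
\]
Take $R=k$. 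Then $|q(w)|^{-1}\le k+O(1)$, and $|h(q(w))|=O(1)$ because $24\ell|q(w)|\le 2+o(1)$. This gives $|b_{D-s}|\le k^{s}(1+O(1/k))^{D}\,O(1)=O(k^s)$ uniformly in $s\le D\le\ell$. Your Step 2 then yields $|j(z)|=O(k)$ at every zero, i.e.\ $\Im z\le\frac{\log k}{2\pi}+O(1)$. That is stronger than the stated bound, so there is no $\frac{1}{1-c}$ to match.
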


Beyond their geometric location, the arithmetic nature of these zeros is also of significant interest. Kohnen \cite{kohnen:eisenstein} proved that all non-elliptic zeros of Eisenstein series are transcendental. Alongside a theorem of Schneider and classical results from the theory of complex multiplication, the fact that all such zeros lie on the arc $\mathcal{A}$ plays a decisive role in his analysis. In Section~\ref{Algebraic-zeros}, we conclude this paper by contrasting this phenomenon with the Miller basis, exhibiting forms $g_{k,m}$ that possess algebraic zeros other than $\rho$ or $i$. More precisely, we determine all such forms in the range $\ell-m\le 25$. 

\section{Approximating the Miller forms}

If $k\ge 4$ is an even weight and $\ell=\dim M_k -1$, we can write 
$ k=12\ell +k'$ for some \[ k'\in \{0, 4,6, 8, 10, 14\}.\]
Consider the Miller basis of $M_k$ given by $g_{k,m}=q^m+O(q^{\ell+1}).$
Since each $g_{k,m}$ has integer Fourier coefficients, it is clear that $\overline{g_{k,m}(z)}=g_{k,m}(-\overline{z}).$ Setting $z=e^{i\theta}$ we obtain
\[\overline{g_{k,m}(e^{i\theta})}=g_{k,m}(-e^{-i\theta})=g_{k,m}(\begin{spmatrix}
&-1\\1&\end{spmatrix}e^{i\theta})=
e^{ik\theta}g_{k,m}(e^{i\theta}).\]
Therefore $g_{k,m}$ can be rescaled along the arc to obtain the
real-valued function $e^{ik\theta/2}g_{k,m}(e^{i\theta})$. We denote
\[\overline{g}_{k,m}(e^{i\theta}) = e^{ik\theta/2+2\pi m\sin\theta} g_{k,m}(e^{i\theta}).\]

One can show (see, for example, \cite[Lemma 3.7]{raveh:miller}) that for each $R>0$ there exists an $A>1$ such that for all $z\in \mathcal{F}$ with $|j(z)|<R$ we have
\[g_{k,m}(z)=\int_{-\frac{1}{2}+iA}^{\frac{1}{2}+iA}G(z,\tau)d\tau,\] where
\[G(z,\tau)=\frac{\Delta^\ell(z)E_{k'}(z)E_{14-k'}(\tau)}{\Delta^{\ell+1}(\tau)(j(\tau)-j(z))}e^{2\pi
  i m \tau}.\]
Since we are primarily interested in roots on $\mathcal{A}$, we may take $R>1728$ and a corresponding $A$, so that the above integral formula holds. Duke-Jenkins and Raveh use the circle method to find good
estimates of the above integral. Abstracting their process, for
each $B>0$, we will integrate on the following contour
\begin{figure}[H]
\centering
\includegraphics[scale=0.25]{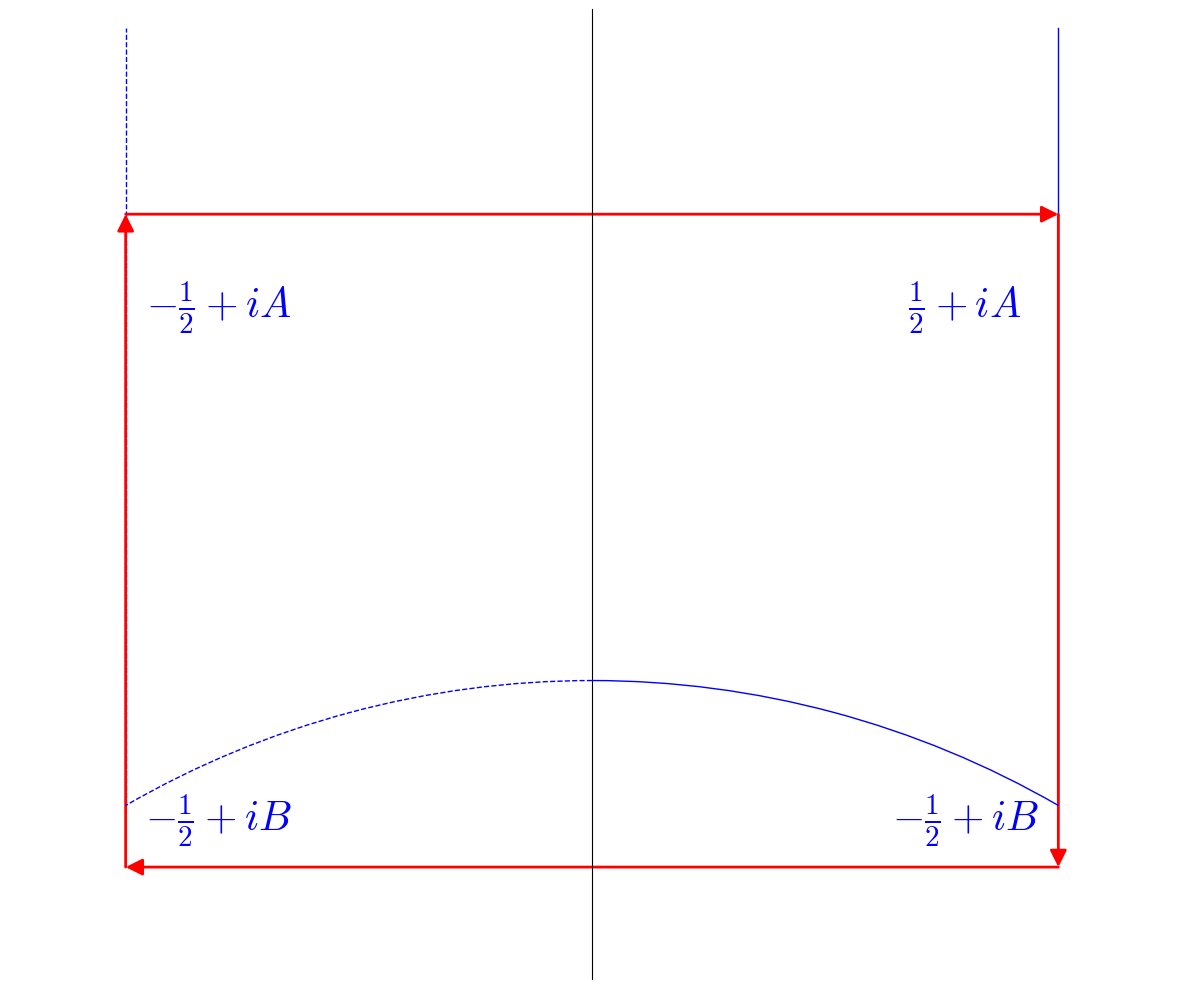}
\caption{Contour of integration}
\end{figure}
This yields
\begin{equation}\label{eq:G}g_{k,m}(e^{i\theta})=\int_{-\frac{1}{2}+iB}^{\frac{1}{2}+iB}G(\tau,e^{i\theta})d\tau-2\pi
i\sum_{\gamma}\Res_{\tau = \gamma
  e^{i\theta}}G(\tau,e^{i\theta}),\end{equation}
where the sum is over all $\gamma\in \SL_2(\mathbb{Z})$ such
that $\Re \gamma e^{i\theta}=-\frac{1}{2}$ and $\Im \gamma
e^{i\theta}>B$, as well as the identity matrix $I_2$ and $S=\begin{spmatrix}
&-1\\1&\end{spmatrix}$. The main idea is that the residues at $I_2$ and $S$ control the size of $\overline{g}_{k,m}(e^{i\theta})$, the other terms being errors that one can estimate.


\begin{lemma}\label{l:poles}
For any $e^{i\theta}\in \mathcal{F}$, the function $j(\tau)-j(e^{i\theta})$ has finitely many roots inside the contour above. At any root $\gamma e^{i\theta}$, which corresponds to a simple pole of $G(\tau,e^{i\theta})$, we have
\[\Res_{\tau = \gamma
  e^{i\theta}}G(\tau,e^{i\theta})=-\frac{1}{2\pi i} (ce^{i\theta}+d)^{-k}
e^{2\pi i m (\gamma e^{i\theta})},\]
giving
\[|-2\pi i\Res_{\tau = \gamma
  e^{i\theta}}G(\tau,e^{i\theta})|=|ce^{i\theta}+d|^{-k}
e^{-2\pi  m \sin\theta|ce^{i\theta}+d|^{-2}}.\]
\end{lemma}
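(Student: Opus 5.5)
Finiteness comes first. The region inside the contour is compact: it is bounded above by the horizontal segment at height $B$ and below by the boundary of $\mathcal{F}$ near the arc. Since $j(\tau)-j(e^{i\theta})$ is holomorphic and non-constant on $\h$, its zeros are isolated, so only finitely many lie in this compact set. The zeros themselves are exactly the points $\gamma e^{i\theta}$ with $\gamma\in\SL_2(\Z)$, because $j$ is a bijection $\SL_2(\Z)\backslash\h\to\mathbb{C}$.

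To compute the residue at $\tau_0=\gamma e^{i\theta}$, I assume $e^{i\theta}\neq i,\rho$, so that $j'(\tau_0)\neq 0$ and the pole is simple. The elliptic points can be handled by continuity, or they are excluded by the hypothesis on non-elliptic zeros. Then
\[\Res_{\tau=\tau_0}G(\tau,e^{i\theta})=\frac{\Delta^\ell(e^{i\theta})E_{k'}(e^{i\theta})E_{14-k'}(\tau_0)}{\Delta^{\ell+1}(\tau_0)\,j'(\tau_0)}e^{2\pi i m\tau_0},\]
where I take the denominator as $j(\tau)-j(z)$ with derivative $j'(\tau)$ in $\tau$. The sign convention, $G(\tau,z)$ versus $G(z,\tau)$, is the thing to keep straight; the stated minus sign comes from the orientation of $j(\tau)-j(z)$. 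The key identity is $j'=-2\pi i\,E_{14}/\Delta=-2\pi i\,E_4^2E_6/\Delta$. Since $E_{k'}E_{14-k'}=E_{14}$ for every admissible $k'$ (with $E_0=1$), the factors combine to give
\[\frac{E_{14-k'}(\tau_0)}{\Delta^{\ell+1}(\tau_0)j'(\tau_0)}=-\frac{1}{2\pi i}\frac{1}{\Delta^{\ell}(\tau_0)E_{k'}(\tau_0)}.\]
The residue therefore equals $-\frac{1}{2\pi i}\frac{f(e^{i\theta})}{f(\gamma e^{i\theta})}e^{2\pi i m\tau_0}$ with $f=\Delta^\ell E_{k'}$, a modular form of weight $12\ell+k'=k$. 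Modularity gives $f(\gamma z)=(cz+d)^k f(z)$, so the ratio is $(ce^{i\theta}+d)^{-k}$. This yields the first formula, provided $f(e^{i\theta})\neq 0$, which holds since $\Delta$ never vanishes and $E_{k'}$ vanishes on $\mathcal{A}$ only at $i$ or $\rho$.

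For the absolute value, $|e^{2\pi i m\tau_0}|=e^{-2\pi m\Im\tau_0}$. Using $\Im(\gamma z)=\Im z/|cz+d|^2$ with $\Im e^{i\theta}=\sin\theta$ gives the second formula.

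The main obstacle is bookkeeping rather than a genuine difficulty. One point is the sign and orientation convention relating $G(z,\tau)$ to $G(\tau,z)$, together with the Ramanujan-type identity $j'=-2\pi iE_{14}/\Delta$. The other is checking that $E_{k'}E_{14-k'}=E_{14}$ for every admissible $k'$, which is true since each product lies in the one-dimensional space $M_{14}$ and has constant term $1$.
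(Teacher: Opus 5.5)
Your proof is correct and follows the paper's argument. You evaluate the simple-pole residue as the numerator over $j'(\gamma e^{i\theta})$, combine $E_{k'}E_{14-k'}=E_{14}$ with $j'=-2\pi i E_{14}/\Delta$, move $\Delta^\ell E_{k'}$ from $e^{i\theta}$ to $\gamma e^{i\theta}$ by weight-$k$ modularity, and take absolute values via $\Im(\gamma z)=\Im z/|cz+d|^2$; you only perform the cancellation and modularity steps in the opposite order.

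One small slip in your finiteness paragraph: the contour encloses the rectangle $|\Re\tau|\le\frac12$, $B\le\Im\tau\le A$, so the poles picked up are the $\gamma e^{i\theta}$ with $\Im\gamma e^{i\theta}>B$, not a region below height $B$. Your compactness argument applies verbatim to that rectangle.
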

\begin{proof}
Indeed,
\begin{align*}
\Res_{\tau = \gamma
  e^{i\theta}}G(\tau,e^{i\theta})&=\frac{\Delta^\ell(e^{i\theta})E_{k'}(e^{i\theta})E_{14-k'}(\gamma e^{i\theta})}{\Delta^{\ell+1}(\gamma e^{i\theta})\frac{dj}{d\tau}(\gamma e^{i\theta})}e^{2\pi
                                   i m \gamma e^{i\theta}}\\
  &=(c e^{i\theta}+d)^{-k}\frac{\Delta^\ell(\gamma e^{i\theta})E_{k'}(\gamma e^{i\theta})E_{14-k'}(\gamma e^{i\theta})}{\Delta^{\ell+1}(\gamma e^{i\theta})\frac{dj}{d\tau}(\gamma e^{i\theta})}e^{2\pi
    i m \gamma e^{i\theta}}\\
  &=-\frac{1}{2\pi i} (ce^{i\theta}+d)^{-k}
e^{2\pi i m (\gamma e^{i\theta})},
\end{align*}
as $E_{k'}(\tau)E_{14-k'}(\tau)=E_{14}(\tau)$ and $\frac{dj}{d\tau}=-2\pi i \frac{E_{14}(\tau)}{\Delta(\tau)}$.
\end{proof}

In particular, the residues at $z$ and $\begin{spmatrix}
&-1\\1&\end{spmatrix}z$ have a combined contribution of
\begin{align*}
-2\pi i \left(\Res_{\tau=e^{i\theta}}+\Res_{\tau=-e^{-i\theta}}\right) &= e^{2\pi i m e^{i\theta}}+ e^{-ik\theta}e^{-2\pi i m e^{-i\theta}} \notag \\
&=e^{-2\pi m\sin\theta-ik\theta/2} 2\cos\left(2\pi m\cos (\theta)+k\theta/2\right).
\end{align*}

The technical core of this section, and of the zeros counting argument in the next, relies on estimating each of the remaining terms in \eqref{eq:G}. First, it is clear that the error satisfies
\begin{align*}
E &=|\overline{g}_{k,m}(e^{i\theta})-2\cos\left(2\pi m\cos
(\theta)+k\theta/2\right)| \notag \\
& \leq e^{2\pi m\sin\theta}\int_{-\frac{1}{2}+iB}^{\frac{1}{2}+iB}|G(\tau,e^{i\theta})|d\tau+e^{2\pi m\sin\theta}\sum_{\gamma\neq I_2,S}|ce^{i\theta}+d|^{-k}
e^{-2\pi  m \sin\theta|ce^{i\theta}+d|^{-2}}.
\end{align*}

\begin{lemma}\label{l:IB}
Let $\frac{\pi}{2}\leq \alpha\leq \beta \leq\frac{2\pi}{3}$. 
For each $B>0$ such that $\Im \gamma e^{i\theta}\neq B$ for any $\gamma$ as above and all $\theta\in [\alpha,\beta]$, there is a constant $h_{\alpha,\beta,B}$ such that
\[\int_{-\frac{1}{2}+iB}^{\frac{1}{2}+iB}|G(\tau,e^{i\theta})|d\tau\leq e^{-2\pi m B+h_{\alpha,\beta,B}}\left(\frac{|\Delta(e^{i\theta})|}{|\Delta(iB)|}\right)^\ell .\]
\end{lemma}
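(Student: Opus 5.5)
We bound $|G(\tau,e^{i\theta})|$ pointwise on the horizontal segment $\tau=x+iB$, $x\in[-\tfrac12,\tfrac12]$, and integrate over an interval of length one. Writing out the definition,
\[
|G(\tau,e^{i\theta})|=\left(\frac{|\Delta(e^{i\theta})|}{|\Delta(\tau)|}\right)^{\ell}\cdot\frac{|E_{k'}(e^{i\theta})|\,|E_{14-k'}(\tau)|}{|\Delta(\tau)|\,|j(\tau)-j(e^{i\theta})|}\cdot e^{-2\pi m B},
\]
since $|e^{2\pi i m\tau}|=e^{-2\pi m\Im\tau}=e^{-2\pi mB}$. The factor $e^{-2\pi mB}$ already appears in the claim.

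\textbf{Step 1: the middle factor.} This factor is independent of $\ell$ and $m$. Since $k'$ ranges over the finite set $\{0,4,6,8,10,14\}$, it suffices to bound it for each $k'$ separately and take the maximum.
\begin{itemize}
\item The numerator is continuous in $(x,\theta)$ on the compact set $[-\tfrac12,\tfrac12]\times[\alpha,\beta]$, so it is bounded.
\item $\Delta$ has no zeros on $\h$, so $|\Delta(\tau)|$ is bounded below on the compact segment.
\item $|j(\tau)-j(e^{i\theta})|$ vanishes exactly when $\tau=\gamma e^{i\theta}$ for some $\gamma\in\SL_2(\Z)$. The hypothesis that $\Im\gamma e^{i\theta}\neq B$ for all such $\gamma$ and all $\theta\in[\alpha,\beta]$ says this never happens on the segment. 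This function is continuous and nonvanishing on the compact set $[-\tfrac12,\tfrac12]\times[\alpha,\beta]$, hence bounded below by a positive constant.
\end{itemize}
Together these give a bound $e^{h_1}$, with $h_1$ depending only on $\alpha,\beta,B$.

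\textbf{Step 2: the $\ell$-th power.} We must replace $|\Delta(x+iB)|^{-\ell}$ by $|\Delta(iB)|^{-\ell}$ without losing a factor exponential in $\ell$, since the constant may not depend on $\ell$. This is the step needing care. Use the product formula $\Delta(\tau)=q\prod_{n\ge1}(1-q^n)^{24}$. On the line $\Im\tau=B$ we have $|q|=e^{-2\pi B}$, which is independent of $x$. Hence
\[
\frac{|\Delta(iB)|}{|\Delta(x+iB)|}=\prod_{n\ge1}\frac{(1-e^{-2\pi nB})^{24}}{|1-q^n|^{24}}\le 1,
\]
because $|1-q^n|\ge 1-|q|^n=1-e^{-2\pi nB}$. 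So $|\Delta(x+iB)|\ge|\Delta(iB)|$: the minimum of $|\Delta|$ on the horizontal line is attained at $x=0$. Therefore $|\Delta(e^{i\theta})/\Delta(\tau)|^{\ell}\le\left(|\Delta(e^{i\theta})|/|\Delta(iB)|\right)^{\ell}$ uniformly, with no constant depending on $\ell$.

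\textbf{Conclusion.} Integrating the pointwise bound over the segment of length one gives the claim with $h_{\alpha,\beta,B}=h_1$. The finiteness of poles inside the contour (Lemma~\ref{l:poles}) is not needed here; only the non-vanishing of $j(\tau)-j(e^{i\theta})$ on the segment is, and that is exactly the hypothesis on $B$.
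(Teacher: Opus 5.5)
Your proof is correct and follows the paper's argument. You factor out $e^{-2\pi mB}$ and the $\ell$-th power of the $\Delta$-ratio, bound the remaining $\ell$-independent factor by compactness (using that the segment avoids all poles), and prove $|\Delta(x+iB)|\ge|\Delta(iB)|$. The one cosmetic difference is in that last inequality: the paper derives it from the nonnegativity of the coefficients of $q/\Delta=\prod_{n\ge1}(1-q^n)^{-24}$, while you apply $|1-q^n|\ge 1-|q|^n$ factor by factor, which is the same triangle-inequality idea. Both versions tacitly use $\ell\ge 0$, which holds for holomorphic $M_k$.
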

\begin{proof}
We follow Duke-Jenkins in estimating the integral:

\[\int_{-\frac{1}{2}+iB}^{\frac{1}{2}+iB}|G(\tau,e^{i\theta})|d\tau\leq e^{-2\pi m B}\max_{|x|\leq \frac{1}{2}}\left(\frac{|\Delta(e^{i\theta})|}{|\Delta(x+iB)|}\right)^\ell \int_{-\frac{1}{2}+iB}^{\frac{1}{2}+iB}\frac{|E_{k'}(e^{i\theta})E_{14-k'}(\tau)|}{|\Delta(\tau)(j(\tau)-j(e^{i\theta})|}d\tau.\]

Remark that $\min_{|x|\leq
  \frac{1}{2}}|\Delta(x+iB)|=|\Delta(iB)|$. Indeed, the
$q$-expansion of $\frac{q}{\Delta}=\sum c_nq^n$ has nonnegative
coefficients $c_n\geq 0$. Plugging in $z=x+iB$, so $|q|=e^{-2\pi B}$, we get: 
\[\frac{e^{-2\pi B}}{|\Delta(x+iB)|}\leq \sum c_n e^{-2\pi
  nB}=\frac{e^{-2\pi B}}{|\Delta(iB)|}.\]
Finally, the function $f(\theta,\tau)=\frac{|E_{k'}(e^{i\theta})E_{14-k'}(\tau)|}{|\Delta(\tau)(j(\tau)-j(e^{i\theta})|}$ is analytic on the compact set $[\sigma,\beta]\times \{x+iB\mid x\in [-\frac{1}{2},\frac{1}{2}]\}$ and thus achieves a maximum $e^{h_{\alpha,\beta,B}}$.
\end{proof}

\begin{lemma}\label{l:miller-approx-arc}
Let $\alpha, \beta$ as above and $B\in (0,\sin \alpha)$.
There exists a constant $I(\alpha,\beta,B)$ such that 
for all $\theta\in [\alpha,\beta]$ we have
\[\overline{g}_{k,m}(e^{i \theta})=2\cos\left(2\pi m\cos
(\theta)+k\theta/2\right)+o(1),\]
as $k\to \infty$ and $\frac{m}{\ell}<I(\alpha,\beta,B)$.
\end{lemma}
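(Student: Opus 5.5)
We bound the error $E$ from the previous display, splitting it into the horizontal integral and the sum over residues at $\gamma\neq I_2,S$, and show both tend to $0$ uniformly for $\theta\in[\alpha,\beta]$ when $k\to\infty$ with $m/\ell$ below a suitable threshold $I(\alpha,\beta,B)$. Since $B<\sin\alpha\le\sin\theta$, every $\gamma$ with $\Im \gamma e^{i\theta}>B$ has $|ce^{i\theta}+d|^2<\sin\theta/B<1/B$. Hence only finitely many pairs $(c,d)$ occur, uniformly in $\theta$. If necessary we shrink $B$ slightly, so that no $\Im\gamma e^{i\theta}$ equals $B$ on $[\alpha,\beta]$ and Lemma~\ref{l:IB} applies.

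\emph{The integral term.} By Lemma~\ref{l:IB}, the first part of $E$ is at most
\[
e^{h_{\alpha,\beta,B}}\exp\Big(2\pi m(\sin\theta-B)+\ell\log\frac{|\Delta(e^{i\theta})|}{|\Delta(iB)|}\Big).
\]
Write $m=\delta\ell$. The exponent is $\ell\big(2\pi\delta(\sin\theta-B)-\lambda(\theta,B)\big)$, where $\lambda(\theta,B)=\log|\Delta(iB)|-\log|\Delta(e^{i\theta})|$. We need $\lambda>0$, i.e.\ $|\Delta(iB)|>|\Delta(e^{i\theta})|$. The function $|\Delta(iy)|y^6$ is maximal at $y=1$, and $|\Delta|$ on the arc is bounded by its value at $i$. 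For $B<\sin\alpha\le 1$ we do not automatically have $|\Delta(iB)|$ large, so this must be checked. The point is the identity $|\Delta(iB)|=B^{-12}|\Delta(i/B)|\approx B^{-12}e^{-2\pi/B}$, which for small $B$ is huge compared to $\max_{\mathcal A}|\Delta|$. Thus $\lambda_0:=\min_{\theta\in[\alpha,\beta]}\lambda(\theta,B)>0$ for the relevant $B$, and this positivity is the hypothesis implicit in the lemma. Setting
\[
I_1=\frac{\lambda_0}{2\pi(\sin\beta'-B)},
\]
with $\sin\theta$ replaced by its maximum over $[\alpha,\beta]$, we get that $\delta<I_1$ makes the exponent at most $-\epsilon\ell$. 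So this term is $O(e^{-\epsilon\ell})=o(1)$.

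\emph{The residue terms.} By Lemma~\ref{l:poles}, each remaining residue contributes
\[
\exp\Big(-k\log|ce^{i\theta}+d|+2\pi m\sin\theta\big(1-|ce^{i\theta}+d|^{-2}\big)\Big).
\]
For $\gamma\neq I_2,S$ with the pole in the region, the points are translates to $\Re=-1/2$. In these cases $c\neq0$, and $|ce^{i\theta}+d|>1$ except at the elliptic point $\rho$. Here $|ce^{i\theta}+d|^2=1-2\cos\theta\cdot(\pm1)+\dots$ can equal $1$; for instance $|e^{i\theta}+1|^2=2+2\cos\theta\ge1$, with equality only at $\theta=2\pi/3$. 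The finite set of pairs gives $r:=\min|ce^{i\theta}+d|\geq 1$, so each term is $\exp(-\ell\,\phi(\theta))$ with
\[
\phi(\theta)=12\log r-2\pi\delta\sin\theta\,(1-r^{-2}).
\]
Since $\log r\ge \tfrac12(1-r^{-2})$, we have $\phi\ge(1-r^{-2})(6-2\pi\delta)$. So for $\delta<3/\pi$ each term is at most $1$, and it decays whenever $r>1$.

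\emph{The main obstacle} is near $\theta=2\pi/3$ (when $\beta=2\pi/3$), where $r\to1$ and the decay degenerates; the same degeneracy also affects the $I_2,S$ normalization. There I would try to use the factor $(1-r^{-2})\asymp(\theta-2\pi/3)$ together with $k$ large. This gives smallness except in an $O(1/k)$-window, where the extra residue in fact merges with the main term, matching the elliptic zero at $\rho$. If this cannot be made uniform, I would state the lemma for $\beta<2\pi/3$ (or absorb the window into the $o(1)$ by continuity). Finally, take $I(\alpha,\beta,B)=\min(I_1,3/\pi)$ minus a margin, and sum the finitely many exponentially small terms to obtain $E=o(1)$.
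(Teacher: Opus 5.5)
\paragraph{A gap at $\beta=2\pi/3$.} Your route is the paper's: Lemma~\ref{l:IB} for the horizontal integral, Lemma~\ref{l:poles} for the remaining residues, each exponent written as $\ell$ times an expression affine in $\delta$, and $I$ taken as the smallest resulting threshold. Your inequality $\log r\ge\frac12(1-r^{-2})$ shows that $\delta<3/\pi$ already makes every residue with $\min_\theta|ce^{i\theta}+d|>1$ exponentially small. That is a tidy replacement for the paper's list of per-$\gamma$ ratios.

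The gap is the degenerate case you flagged, and neither remedy you propose works. Suppose $\beta=2\pi/3$ and $B<\sqrt3/2$. The pole $-\frac12+\frac i2\tan\frac\theta2$ coming from $(c,d)=(1,1)$ lies above height $B$ near $\beta$, and $|e^{i\theta}+1|^2=2+2\cos\theta\to1$, so $r=1$. Your $I=\min(I_1,3/\pi)$ is still positive whenever $\lambda_0>0$; for example $\alpha=2.05$, $B=0.8$ works, since $|\Delta(0.8i)|>|\Delta(\rho)|$. Yet the conclusion fails:
\begin{itemize}
\item that residue has modulus about $e^{-\ell(1+2\cos\theta)(6-2\pi\delta\sin\theta)}$, which is of order $1$ for $\theta$ within $O(1/k)$ of $2\pi/3$;
\item for $k'=4$ one has $g_{k,m}(\rho)=0$ because $g_{k,m}=\Delta^\ell E_4P(j)$, while $2\cos\left(2\pi m\cos\frac{2\pi}{3}+\frac{k\pi}{3}\right)=\pm1$.
\end{itemize}
So nothing can be absorbed into the $o(1)$ ``by continuity''. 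Restricting to $\beta<2\pi/3$ proves a weaker statement than the lemma, which allows $\beta=2\pi/3$.

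The fix is what the paper does. Put into the minimum defining $I$ the per-$\gamma$ thresholds $\frac{12\log|ce^{i\theta}+d|}{2\pi\sin\theta\,(1-|ce^{i\theta}+d|^{-2})}$, made uniform on $[\alpha,\beta]$ via the monotonicity of $c^2+d^2+2cd\cos\theta$. These vanish when $|ce^{i\beta}+d|=1$, so $I=0$ and the lemma is vacuous exactly in the bad case. Equivalently, keep your $I$ when $r>1$ and set $I=0$ when $r=1$.

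\paragraph{Incorrect side claims.} None of these is load-bearing.
\begin{itemize}
\item On $[\pi/2,2\pi/3]$ the sine is decreasing, so $\sin\theta\le\sin\alpha$, not $\ge$. The maximum to use in $I_1$ is therefore $\sin\alpha$.
\item $|\Delta(e^{i\theta})|$ increases from $i$ to $\rho$ (Raveh's Prop.~3.1, which the paper uses to write $\lambda_0=\log|\Delta(iB)|-\log|\Delta(e^{i\beta})|$). Its maximum on the arc is at $\rho$, not at $i$.
\item $B^{-12}e^{-2\pi/B}\to0$ as $B\to0$, so $|\Delta(iB)|$ is tiny, not huge, for small $B$. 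Also $\lambda_0>0$ can fail: for instance $|\Delta(i\sqrt3/2)|\le|\Delta(\rho)|$, by the minimality at $x=0$ shown in the proof of Lemma~\ref{l:IB}.
\end{itemize}
You do not need $\lambda_0>0$ at all. If $\lambda_0\le0$, then $I_1\le0$ and the statement is vacuous, exactly as with the paper's explicit $I$.

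Finally, ``shrinking $B$ slightly'' need not avoid poles at height $B$. A pole height such as $\frac12\tan\frac\theta2$ sweeps an interval as $\theta$ varies. The paper also applies Lemma~\ref{l:IB} without checking this hypothesis.
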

\begin{proof}
Let $\mathcal{R}$ be the finite set of elements $\gamma\in \SL_2(\mathbb{Z})$ such that $\Im \gamma e^{i\theta}\geq B$ for some $\theta\in [\alpha,\beta]$, excluding $I_2$ and $S$.
From Lemma \ref{l:IB} we conclude that 
\[E\leq e^{2\pi m (\sin\theta-B)+h_B -\ell
  (\ln|\Delta(iB)|-\ln|\Delta(e^{i\theta})|)}+\sum_{\gamma\in \mathcal{R}} e^{2\pi
  m\sin\theta(1-|ce^{i\theta}+d|^{-2})-k\ln|ce^{i\theta}+d|}.\]
Writing $\delta=\frac{m}{\ell}\in [0,1]$ we get
\[E\leq e^{\ell(2\pi \delta (\sin\theta-B) -
  (\ln|\Delta(iB)|-\ln|\Delta(e^{i\theta})|))+h_B}+\sum_{\gamma\in \mathcal{R}} e^{\ell(2\pi
  \delta \sin\theta(1-|ce^{i\theta}+d|^{-2})-12\ln|ce^{i\theta}+d|)-k'\ln|ce^{i\theta}+d|}.\]

When $k\to\infty$, the error term above approaches 0 as long as
each of the following terms is negative:
\begin{align}\label{eq:upperbound}
2\pi \delta (\sin\theta-B) -
  (\ln|\Delta(iB)|-\ln|\Delta(e^{i\theta})|)&<0\\
  2\pi
  \delta \sin\theta(1-|ce^{i\theta}+d|^{-2})-12\ln|ce^{i\theta}+d|&<0\ \ \ \forall \gamma\in \mathcal{R}.\notag
\end{align}
The bounds \eqref{eq:upperbound} are guaranteed for
all such $\theta\in [\alpha,\beta]$ if
\begin{align}\label{eq:upperboundab}
2\pi \delta (\sin\alpha-B) -
  (\ln|\Delta(iB)|-\ln|\Delta(e^{i\beta})|)&<0\\
  2\pi
  \delta\sin\alpha(1-|ce^{i\alpha}+d|^{-2})-12\ln|ce^{i\beta}+d|&<0\ \ \ \forall \gamma\in \mathcal{R}\ \ cd\geq 0\notag\\
  2\pi
  \delta \sin\alpha(1-|ce^{i\beta}+d|^{-2})-12\ln|ce^{i\alpha}+d|&<0\ \ \ \forall \gamma\in \mathcal{R}\ \ cd<0.\notag
\end{align}
Indeed, we use that $|\Delta(e^{i\theta})|$ is increasing on
$[\alpha,\beta]$ (\cite[Prop 3.1]{raveh:miller}) and that
$|ce^{i\theta}+d|^2=c^2+d^2+2cd\cos\theta$ is $\geq 1$ on $[\frac{\pi}{2},\frac{2\pi}{3}]$, and is increasing if and
only if $cd<0$. Finally, consider $I(\alpha, \beta, B)$ the
minimum of the following:
\begin{itemize}
\item $\dfrac{\ln|\Delta(iB)|-\ln|\Delta(e^{i\beta})|}{2\pi
  (\sin\alpha-B)}$
\item $\dfrac{12\ln|ce^{i\beta}+d|}{2\pi\sin \alpha
  (1-|ce^{i\alpha}+d|^{-2})}$ for all $cd\geq 0$ and
\item $\dfrac{12\ln|ce^{i\alpha}+d|}{2\pi\sin \alpha (1-|ce^{i\beta}+d|^{-2})}$ for all $cd< 0$.
\end{itemize}
\end{proof}
Thus, for each $\theta$ one can obtain a set of upper bounds for
$\delta=\frac{m}{\ell}$ which guarantee the error $E\to 0$ as
$k\to \infty$. One may reinterpret the approach of Duke-Jenkins and Raveh as bounding $I(\frac{\pi}{2}, 1.9, 0.75)$ and $I(1.9,\frac{2\pi}{3}, 0.65)$. 
Our method relies on the fact that different values of $B>0$ are better suited for different ranges of $\theta$, which we will execute using Sage Math
\cite{sage} in the following section.

\section{Proof of Theorem A}
The approximation from Lemma \ref{l:miller-approx-arc} provides the
control needed to count zeros of $g_{k,m}$ on any portion of the arc $\mathcal{A}$. 

\begin{proposition}\label{p:arc-count}
Suppse $\frac{\pi}{2}\leq \alpha<\beta\leq \frac{2\pi}{3}$ and
$B<\sin \alpha$. As $k\to\infty$ such that
$\delta=\frac{m}{\ell}<\min(I(\alpha,\beta,B),\frac{3}{\pi})$, the Miller form
$g_{k,m}$ at least
\[\left\lfloor\frac{k \beta}{2\pi}+2 m \cos
\beta\right\rfloor - \left\lceil\frac{k \alpha}{2\pi}+2 m
\cos \alpha \right\rceil\]
roots on $\mathcal{A}_{\alpha,\beta}=\{e^{i\theta}\mid
\alpha\leq \theta\leq \beta\}$.
\end{proposition}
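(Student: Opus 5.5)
The approach is the standard intermediate value argument of Rankin--Swinnerton-Dyer applied to the real-valued function $\overline{g}_{k,m}(e^{i\theta})$ on $[\alpha,\beta]$. Since $e^{ik\theta/2+2\pi m\sin\theta}$ never vanishes, the zeros of $g_{k,m}$ on $\mathcal{A}_{\alpha,\beta}$ coincide with those of $\overline{g}_{k,m}$. Moreover $\overline{g}_{k,m}(e^{i\theta})$ is real for real $\theta$, by the symmetry $\overline{g_{k,m}(e^{i\theta})}=e^{ik\theta}g_{k,m}(e^{i\theta})$ established above. By Lemma \ref{l:miller-approx-arc}, under the hypothesis $\delta<I(\alpha,\beta,B)$ we have
\[\overline{g}_{k,m}(e^{i\theta})=2\cos\phi(\theta)+o(1),\qquad \phi(\theta)=\frac{k\theta}{2}+2\pi m\cos\theta,\]
uniformly for $\theta\in[\alpha,\beta]$ as $k\to\infty$. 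The approximation must be uniform in $\theta$, and it is: the constants $h_{\alpha,\beta,B}$ and the finite set $\mathcal{R}$ do not depend on $\theta$, and the exponents in \eqref{eq:upperboundab} are bounded away from $0$ uniformly.

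The next step is to show that $\phi$ is strictly increasing on $[\alpha,\beta]$. We have $\phi'(\theta)=\frac{k}{2}-2\pi m\sin\theta\ge \frac{k}{2}-2\pi m$, and since $k\ge 12\ell$ this is $\ge 6\ell-2\pi m=\ell(6-2\pi\delta)>0$ exactly when $\delta<3/\pi$. This is the role of the second condition $\delta<\frac{3}{\pi}$. Hence, as $\theta$ runs from $\alpha$ to $\beta$, the quantity $\phi(\theta)/\pi$ increases monotonically from $\frac{k\alpha}{2\pi}+2m\cos\alpha$ to $\frac{k\beta}{2\pi}+2m\cos\beta$.

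For each integer $n$ with $\lceil \frac{k\alpha}{2\pi}+2m\cos\alpha\rceil\le n\le \lfloor\frac{k\beta}{2\pi}+2m\cos\beta\rfloor$ there is a unique $\theta_n\in[\alpha,\beta]$ with $\phi(\theta_n)=n\pi$, and there $2\cos\phi(\theta_n)=2(-1)^n$. Since the error is $o(1)$, for $k$ large it is below $1$ in absolute value, so $\overline{g}_{k,m}(e^{i\theta_n})$ has sign $(-1)^n$. Consecutive points $\theta_n<\theta_{n+1}$ therefore give opposite signs, and by the intermediate value theorem each interval $(\theta_n,\theta_{n+1})$ contains a zero. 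The number of such consecutive pairs is the count of integers in the range minus one, which is precisely $\lfloor\cdot\rfloor-\lceil\cdot\rceil$, giving the claimed lower bound. The intervals are disjoint, so the zeros are distinct.

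The main obstacle is not the counting itself but confirming that the $o(1)$ in Lemma \ref{l:miller-approx-arc} is genuinely uniform in $\theta\in[\alpha,\beta]$ and in the admissible $m$ with $\delta$ bounded below the threshold. This is needed so that a single $k_0$ works for all sample points $\theta_n$ simultaneously. I would handle this by fixing $\delta\le\delta_0<\min(I,3/\pi)$, so that all exponents in \eqref{eq:upperboundab} are at most $-\epsilon\ell$ for some fixed $\epsilon>0$; the error is then $O(e^{-\epsilon\ell})$ uniformly. At the endpoints $\theta=\pi/2$ or $2\pi/3$ one might worry about elliptic zeros, but the count only uses sign changes strictly between sample points, so no adjustment is needed.
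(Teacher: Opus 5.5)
Your proposal is correct and follows essentially the same route as the paper's proof. Both use the uniform approximation of Lemma \ref{l:miller-approx-arc} with error below the amplitude $2$, the monotonicity of $k\theta/2+2\pi m\cos\theta$ coming from $\delta<3/\pi$, and an intermediate value count of sign changes between consecutive multiples of $\pi$. Your explicit derivative bound and your remark that $\delta$ should stay bounded away from $I(\alpha,\beta,B)$ to make the $o(1)$ uniform are useful details that the paper leaves implicit.
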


\begin{proof}
Lemma \ref{l:miller-approx-arc} guarantees that, as $k\to\infty$:
\[|\overline{g}_{k,m}(e^{i\theta})-2\cos(k\theta/2+2\pi m\cos\theta)|<2,\]
for all $\theta\in \mathcal{A}_{\alpha,\beta}$ as long as
$\frac{m}{\ell}<I(\alpha,\beta,B)$. When this inequality is satisfied,
as in \cite[Lemma 3]{duke-jenkins} and \cite[\S
3.4]{raveh:miller} we see that $k\theta/2+2\pi m\cos\theta$ is
increasing on $[\alpha,\beta]$ as
$\delta\leq\frac{3}{\pi}$. This means that the expression takes
at least
\[\left\lfloor\frac{k \beta}{2\pi}+2 m \cos
\beta\right\rfloor - \left\lceil\frac{k \alpha}{2\pi}+2 m
\cos \alpha \right\rceil+1\]
consecutive integer multiple of $\pi$ values in $[\alpha,\beta]$, and thus $2\cos(k\theta/2+2\pi m\cos\theta)$ changes sign in $[\alpha,\beta]$ at least $\left\lfloor\frac{k \beta}{2\pi}+2 m \cos
\beta\right\rfloor - \left\lceil\frac{k \alpha}{2\pi}+2 m
\cos \alpha \right\rceil$ times.
\end{proof}

The stage is now set for the proof of our first main result.

\begin{theorem}\label{t:main-arc}
As $k\to \infty$ we have that
\[\frac{1}{D}
\left|
\left\{
\parbox{3.3cm}{
\centering
non-elliptic zeros\\
of $g_{k,m}$ on $\mathcal{A}$
}
\right\}
\right|\geq \begin{cases}1&\delta<0.6194\\ 1-2.9832(\delta-0.6194)& 0.6194\leq \delta<0.9546\\ 0&\delta\geq 0.9546.\end{cases}\]
Moreover, as $k\to \infty$ a subset of the roots of $g_{k,m}$ will become equidistributed on the arc from $\pi/2$ to 
\[\Theta(\delta)=
	\begin{cases}
		2\pi/3 &\delta<0.6194\\
  		2\pi/3- 2.9832(\delta-0.6194)\pi / 6 & 0.6194\leq \delta<0.9546\\
  		\pi/ 2 &\delta\geq 0.9546.
\end{cases}\]

More precisely, there exists a decreasing piece-wise linear function
$\mathcal{P}:[0,1]\to [0,1]$, explicitly computed in Sage
and whose graph is given below, such that for $k$ large enough
\[\frac{1}{D}
\left|
\left\{
\parbox{3.3cm}{
\centering
non-elliptic zeros\\
of $g_{k,m}$ on $\mathcal{A}$
}
\right\}
\right|
\geq
\mathcal{P}\left(\frac{m}{\ell}\right).\]
\end{theorem}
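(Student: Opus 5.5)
The plan is to cover $[\frac{\pi}{2},\Theta(\delta)]$ by finitely many subintervals and apply Proposition \ref{p:arc-count} on each one. Concretely, I choose a partition $\frac{\pi}{2}=\theta_0<\theta_1<\dots<\theta_N=\frac{2\pi}{3}$, and for each piece $[\theta_{i-1},\theta_i]$ a height $B_i<\sin\theta_{i-1}$ that avoids the finitely many values $\Im\gamma e^{i\theta}$. This choice is made numerically in Sage, in the spirit of Lemma \ref{l:miller-approx-arc}. Each piece then has an admissible threshold $I_i=I(\theta_{i-1},\theta_i,B_i)$, which is a minimum of finitely many explicit quantities. For a given $\delta$, let $r(\delta)$ be the largest index such that $\delta<\min(I_1,\dots,I_r,\frac{3}{\pi})$. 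Adding the counts from Proposition \ref{p:arc-count} over $i\le r$, the floor/ceiling terms telescope up to $O(r)=O(1)$ errors. This gives at least
\[\frac{k\theta_r}{2\pi}+2m\cos\theta_r-\frac{k}{4}-O(1)\]
non-elliptic zeros on $\mathcal{A}_{\pi/2,\theta_r}$, where $\cos\frac{\pi}{2}=0$. One checks that sign changes at adjacent interval endpoints are not double counted; the $O(1)$ slack absorbs this.

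Next I normalize by $D=\ell-m$. With $k=12\ell+k'$ and $m=\delta\ell$, the count divided by $D$ tends to
\[\frac{6\theta_r/\pi-3+2\delta\cos\theta_r}{1-\delta}.\]
At $\theta_r=\frac{2\pi}{3}$ this equals exactly $1$. So whenever $\delta$ is below every $I_i$, all $D$ non-elliptic zeros lie on $\mathcal{A}$ (the total number of non-elliptic zeros in $\mathcal{F}$ is $\ell-m$ by the valence formula, since $g_{k,m}$ vanishes to order $m$ at $\infty$). The first threshold is $0.6194=\min_i I_i$ for the optimized partition. As $\delta$ grows, the pieces nearest $\frac{2\pi}{3}$ drop out, so the admissible endpoint $\Theta(\delta)$ decreases. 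Defining $\mathcal{P}(\delta)$ as the resulting step/interpolated lower bound gives the decreasing piecewise linear function. The displayed linear bound $1-2.9832(\delta-0.6194)$ and the formula for $\Theta$ come from taking a linear minorant between the two computed endpoints $0.6194$ and $0.9546$; the latter is the value where even the first piece near $\frac{\pi}{2}$ fails. Equidistribution on $[\frac{\pi}{2},\Theta(\delta)]$ follows because the zeros found are interlaced with the points where $k\theta/2+2\pi m\cos\theta\in\pi\mathbb{Z}$, and this phase has derivative $k/2-2\pi m\sin\theta$, which is asymptotically proportional to a fixed positive density.

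The main obstacle is the numerical optimization together with its certification. For each $\theta$-window one must choose $B$ and enumerate the finite set $\mathcal{R}$ of $\gamma$ with $\Im\gamma e^{i\theta}\ge B$, then evaluate $|\Delta(iB)|$ and $|\Delta(e^{i\beta})|$ rigorously. The windows must also be refined finely enough near $\frac{2\pi}{3}$ so that the minimum threshold reaches $0.6194$ rather than the weaker Duke--Jenkins/Raveh constants. I would first run a grid over $(\theta,B)$ in Sage, computing the pointwise bound from \eqref{eq:upperbound}. I would then pick the windows greedily from $\frac{\pi}{2}$ upward, using \eqref{eq:upperboundab} to make the bound uniform on each window. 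Finally I would check that the resulting piecewise linear $\mathcal{P}$ dominates the stated closed-form bound.
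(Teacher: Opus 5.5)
Your strategy is the paper's: cut $[\frac{\pi}{2},\frac{2\pi}{3}]$ into finitely many windows, optimize $B$ on each in Sage, apply Lemma~\ref{l:miller-approx-arc} and Proposition~\ref{p:arc-count}, and normalize by $D$ to get $\frac{6\theta_r/\pi-3+2\delta\cos\theta_r}{1-\delta}$. The gap is in the case $\delta<0.6194$.

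\textbf{Exact count for $\delta<0.6194$.} You pass from ``count$/D\to 1$'' to ``all $D$ non-elliptic zeros lie on $\mathcal{A}$''. But your count is only $\geq D-O(N)$, which allows a bounded number of zeros off the arc. The statement ($\mathcal{P}=1$ for $k$ large) says there are none.

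The loss comes from applying Proposition~\ref{p:arc-count} window by window. Put $\phi(\theta)=k\theta/2+2\pi m\cos\theta$. The sign change between the last point of $\phi^{-1}(\pi\mathbb{Z})$ in one window and the first point in the next window is never counted. The danger is undercounting, not double counting: the zeros produced lie in disjoint open intervals between consecutive such points.

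The paper avoids this loss. An approximation that is uniform on each of finitely many windows is uniform on their union. Moreover $\phi'=k/2-2\pi m\sin\theta>0$ on all of $[\frac{\pi}{2},\frac{2\pi}{3}]$ for $\delta<\frac{3}{\pi}$. So one runs the sign-change count once on the whole arc. This yields $\lfloor \frac{k}{3}-m\rfloor-\lceil \frac{k}{4}\rceil$ zeros, which equals exactly $\ell-m$ for every $k'\in\{0,4,6,8,10,14\}$. The valence bound then forces every non-elliptic zero onto $\mathcal{A}$. In the intermediate range your $O(1)$ loss is harmless, since it is absorbed by the $\varepsilon$ built into $\mathcal{P}$.

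\textbf{The window ending at $\frac{2\pi}{3}$.} A second issue will surface when you certify this window. To get a threshold near $0.62$ there you need $B\approx 0.66<\frac{\sqrt3}{2}$. Taking $B>\frac{\sqrt3}{2}=\frac12\tan\frac{\pi}{3}$ to keep $\mathcal{R}$ empty is useless: then $\ln|\Delta(iB)|<\ln|\Delta(\rho)|$, so the first bound defining $I$ is negative.

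Hence the $\gamma$ with $(c,d)=(1,1)$ belongs to $\mathcal{R}$; its pole $-1/(e^{i\theta}+1)$ sits on $\Re\tau=-\frac12$ at height $\frac12\tan\frac{\theta}{2}$. Its exponent in \eqref{eq:upperbound} is exactly $0$ at $\theta=\frac{2\pi}{3}$. So \eqref{eq:upperboundab} returns threshold $0$, and Lemma~\ref{l:miller-approx-arc} as stated gives nothing on that window. This is not an artifact: for $k'=4$ one has $g_{k,m}(\rho)=0$ while $|2\cos\phi(\frac{2\pi}{3})|=1$, so the error at $\rho$ is not $o(1)$.

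The fix is that this residue contributes modulus at most $1$ on $[\frac{\pi}{2},\frac{2\pi}{3}]$ when $\delta<\frac{3}{\pi}$. To see this, use $\ln u\geq 1-u^{-1}$ with $u=|e^{i\theta}+1|^2\geq 1$. Hence the total error is $\leq 1+o(1)<2$, and error $<2$ is all that the proof of Proposition~\ref{p:arc-count} actually uses. The paper's remark that $\mathcal{R}$ is empty passes over this point too.
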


\begin{proof}
We subdivide the interval $[\frac{\pi}{2},\frac{2\pi}{3}]$ into
$N=1000$ equal intervals $I_r=[\frac{\pi}{2}+\frac{\pi r}{6N},
\frac{\pi}{2}+\frac{\pi(r+1)}{6N}]$, for $0\leq r<N$, and approximate the Miller form $g_{k,m}$ on each arc $\mathcal{A}_r = e^{i I_r}$ separately.

On each interval $I_r=[\alpha_r,\beta_r]$ we choose $B_r\in (\frac{1}{2}\tan\frac{\beta_r}{2},\sin \alpha_r)$ and compute
the bound $\delta_r(B_r)=I(\alpha_r,\beta_r,B_r)$. Lemma \ref{l:miller-approx-arc}
guarantees that, as $k\to\infty$,
\[\overline{g}_{k,m}(e^{i\theta})=2\cos(k\theta/2+2\pi
m\cos\theta)+o(1)\]
for all $\theta\in I_r$ as long as $\frac{m}{\ell}<\delta_r(B_r)$ and $\frac{3}{\pi}$. We remark that the lower bound $B_r> \frac{1}{2}\tan\frac{\beta_r}{2}\geq \frac{1}{2}\geq \frac{1}{2}\cot\frac{\alpha_r}{2}$ implies that the set $\mathcal{R}$ (as defined in Lemma  \ref{l:miller-approx-arc}) is empty.

We want to choose $B_r\in (\frac{1}{2}\tan\frac{\beta_r}{2},\sin \alpha_r)$ such that $\delta_r(B_r)$ is as large as possible. We perform an incremental search in Sage: as $B_r$ varies from $\frac{1}{2}\tan\frac{\beta_r}{2}$ to $\sin \alpha_r$ in increments of $0.0005$ we compute $\delta_r(B_r)$ and in the end choose $B_r$ such that $\delta_r=\delta_r(B_r)$ is maximal. The following graph shows the values of $B_r$ maximizing $\delta_r(B_r)$ under each
corresponding arc $\mathcal{A}_r$, as well as the choice of
$B_{\operatorname{DJ}}=0.65$ and $B_{\operatorname{DJ}}=0.75$
from \cite{duke-jenkins}. By inspection, $B_r$ and $\delta_r$ are decreasing in $r$.
\begin{figure}[H]
\centering
\includegraphics[width=0.5\textwidth]{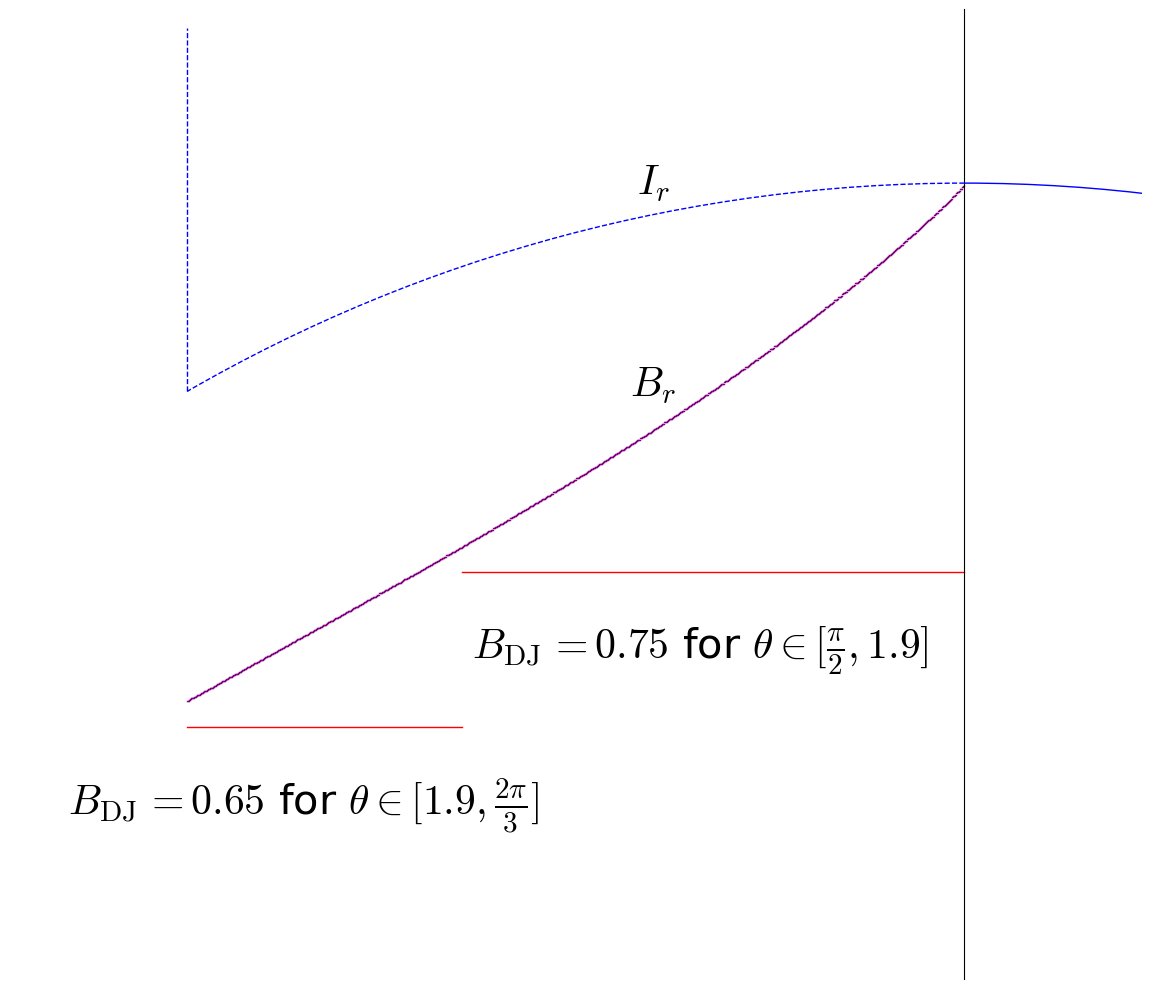}
\caption{Choice of $B_r$ for each arc $\mathcal{A}_r$}
\end{figure}

Suppose, now, that $\delta<\frac{3}{\pi}\approx 0.9549$. Since $\delta_r$ is decreasing in $r$, there exists a maximum $r_0$ such that
$\delta<\delta_{r}$ for all $r\leq r_0$. We conclude that
\[\overline{g}_{k,m}(e^{i\theta})=2\cos(k\theta/2+2\pi
m\cos\theta)+o(1)\]
for all $\theta\in [\frac{\pi}{2},\beta_{r_0}]$. 
Then
Proposition \ref{p:arc-count} applies for this interval and so
the Miller form $g_{k,m}$ will have at least
\[\left\lfloor \frac{k \beta_{r_0}}{2\pi}+2 m\cos \beta_{r_0}\right\rfloor-\left\lceil \frac{k}{4}\right\rceil\]
roots on the arc $\mathcal{A}$, all of them lying on the arc corresponding to the angle interval $[\frac{\pi}{2},\beta_{r_0}]$.

The computation in Sage gives the value
\[\delta_N=\delta_{1000}=0.6194\ldots\]
which implies that if $\frac{m}{\ell}<\delta_N$ for $k\gg 0$,
$g_{k,m}$ has at least
\[\left\lfloor \frac{k}{3}- m\right\rfloor-\left\lceil
\frac{k}{4}\right\rceil=\ell-m\]
roots on $\mathcal{A}$. Since $g_{k,m}$ can have at most
$\ell-m$ roots in the upper half plane, we conclude that every
root of $g_{k,m}$ lies on $\mathcal{A}$ and define
$\mathcal{P}(\delta)=1$ for $\delta<0.6194$.

Suppose that $\delta_{r_0+1}\leq \delta<\delta_{r_0}$. Then
$g_{k,m}$ has at least
\[\left\lfloor \frac{k \beta_{r_0}}{2\pi}+2 m\cos
\beta_{r_0}\right\rfloor-\left\lceil
\frac{k}{4}\right\rceil>\frac{k \beta_{r_0}}{2\pi}+2 m\cos
\beta_{r_0}- \frac{k}{4}-2>\frac{6\ell \beta_{r_0}}{\pi}+2 m\cos
\beta_{r_0}- 3\ell-2\]
roots on $\mathcal{A}$. 
This implies that
\[\frac{1}{D}\left|\left\{\textrm{zeros of }g_{k,m}\textrm{ on
}\mathcal{A}\right\}\right|\geq
\frac{\frac{6}{\pi}\beta_{r_0}+2 \delta \cos
  \beta_{r_0}-3}{1- \delta}-\frac{2}{\ell}.\]
Fixing a small $\varepsilon$, say $\varepsilon = 10^{-10}$, if
$k>24/\varepsilon$ we may define
\[\mathcal{P}(\delta)=\min(0,\frac{\frac{6}{\pi}\beta_{r_0}+2 \delta \cos
  \beta_{r_0}-3}{1- \delta}-\varepsilon)\]
for all $\delta\in (\delta_{r_0+1},\delta_{r_0}]$.

Finally, set $\mathcal{P}(\delta)=0$ for $\delta\in
[\frac{3}{\pi},1]$. We notice that $\mathcal{P}(\delta)=0$ for $\delta\geq 0.9546$. 

\begin{figure}[H]
\centering
\includegraphics[scale=0.15]{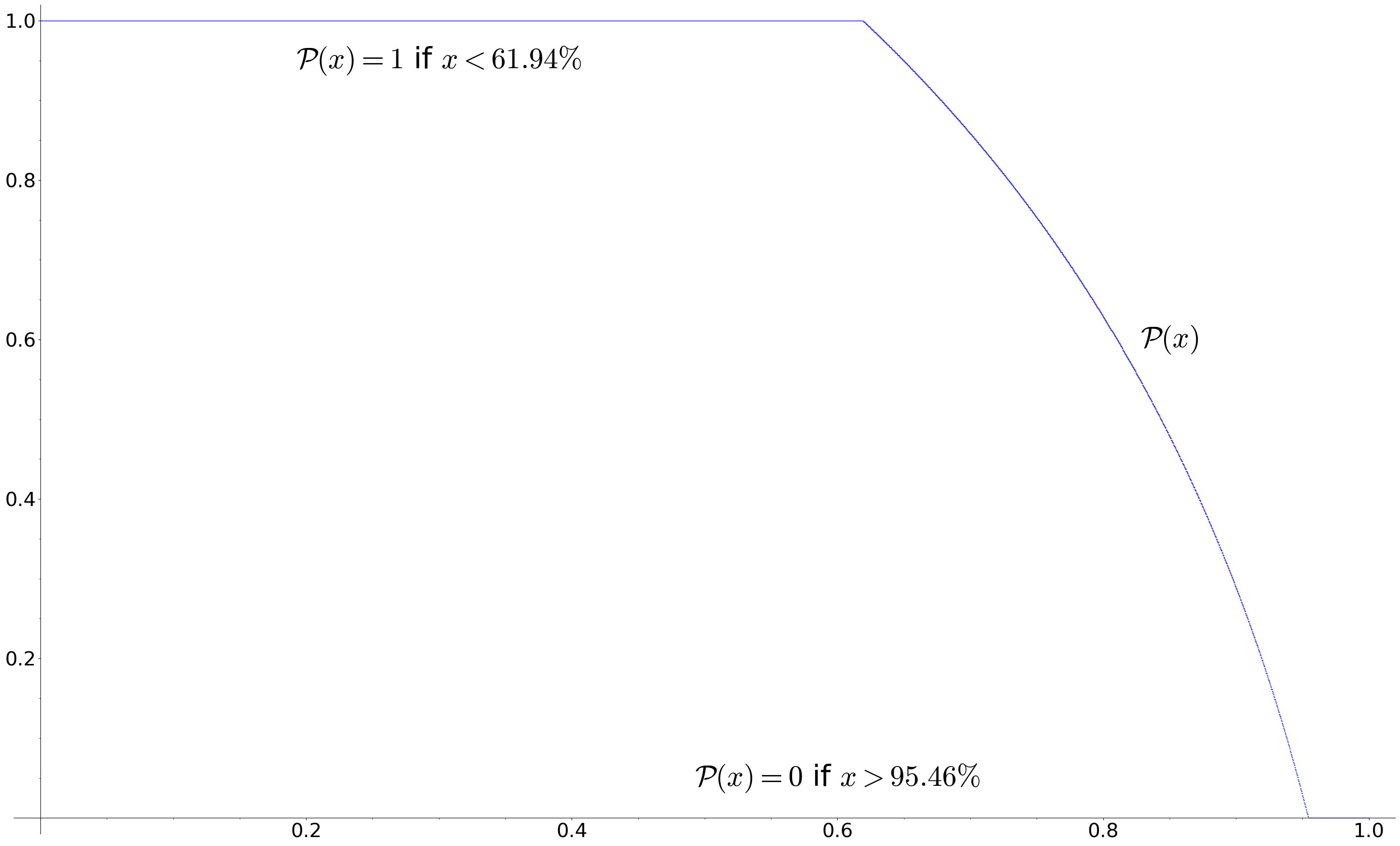}
\caption{Lower bound $\mathcal{P}$ for the proportion of roots on $\mathcal{A}$}
\end{figure}


The first lower bound comes from the fact that the linear function connecting $(0.6194,1)$ and $(0.9546,0)$ lies below the graph of $\mathcal{P}$.

We may similarly define a function $\mathcal{T}(\delta)$ by
setting $\mathcal{T}(\delta)=\frac{2\pi}{3}$ for
$\delta<0.6194$, $\mathcal{T}(\delta)=\beta_{r_0}$ if
$\delta_{r_0+1}\leq \delta<\delta_{r_0},\frac{3}{\pi}$, and
$\mathcal{T}(\delta)=\frac{\pi}{2}$ otherwise.
\begin{figure}[H]
\centering
\includegraphics[scale=0.15]{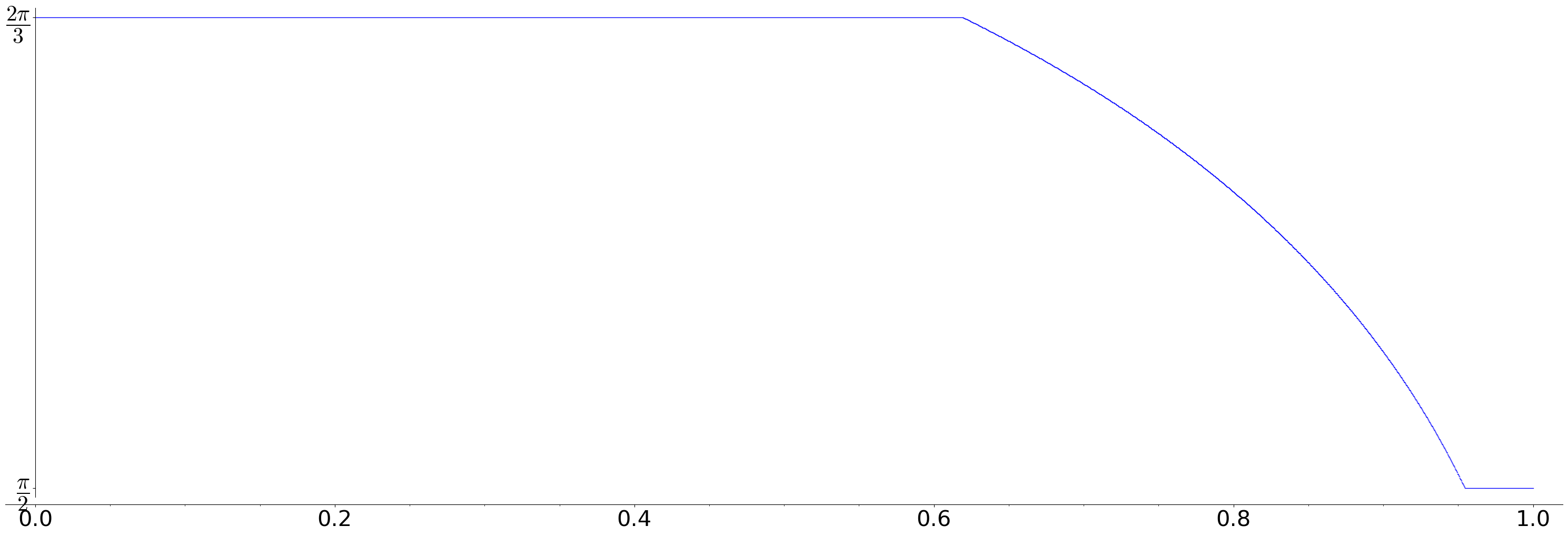}
\caption{Angle $\mathcal{T}(\delta)$ with $\mathcal{A}_{\frac{\pi}{2},\mathcal{T}(\delta)}$ containing Miller zeros\label{fig:calT1000}}
\end{figure}
By the above, $\overline{g}_{k,m}(e^{i\theta})=2\cos(k\theta/2+2\pi
m\cos\theta)+o(1)$ on the arc $[\frac{\pi}{2},\mathcal{T}(\delta)]$ and the equidistribution was proved in \cite[\S3.4]{raveh:miller} on any arc where this asymptotics holds. 

The conclusion follows form the fact that $\Theta$ is the piece-wise linear lower hull of $\mathcal{T}$.
\end{proof}

\section{Zeros of weakly holomorphic forms}\label{Weakly-holom-Miller}

In this section, we extend our previous result to weakly holomorphic modular forms, i.e., 
modular forms that are holomorphic on $\h$ with poles allowed at the cusp $i\infty$. Writing again $k=12\ell+k'$ for some $\ell\in \Z$ and $k'\in\{0,4,6,8,10,14\}$, the space of weight $k$ weakly holomorphic modular forms for $\SL_2(\mathbb{Z})$ is infinite-dimensional with a basis consisting of forms $g_{k,m}=q^{m}+O(q^{\ell+1})$ for $m\leq \ell$.




If $m\le \ell-|\ell|$, Duke and Jenkins \cite{duke-jenkins} showed that all roots of
$g_{k,m}$ are on the arc $\mathcal{A}$. We refine their result by showing that, for $k\ll 0$, all finite roots of $g_{k,m}$ are
on $\mathcal{A}$ whenever $\dfrac{m}{\ell}>115.98\%$, and moreover $g_{k,m}$
has some roots on $\mathcal{A}$ if $\dfrac{m}{\ell}>110.26\%$. This
is close to the experimentally obtained bounds, as seen in the following table. 
\begin{center}
\begin{tabular}{llll}
$k$ & $\ell$ & Not all roots on $\mathcal{A}$ & No roots on
                                 $\mathcal{A}$\\
  \hline
  $-12000$ & $-1000$ & $m\geq -1154$ & $m\geq -1090$ \\
  $-12000+4$ & $-1000$ & $m\geq -1155$ & $m\geq -1096$ \\
  $-12000+6$ & $-1000$ & $m\geq -1151$ & $m\geq -1089$ \\
  $-12000+8$ & $-1000$ & $m\geq -1156$ & $m\geq -1105$ \\
  $-12000+10$ & $-1000$ & $m\geq -1152$ & $m\geq -1096$ \\
  $-12000+14$ & $-1000$ & $m\geq -1153$ & $m\geq -1104$
\end{tabular}
\end{center}

Given the similarity to the proof of Theorem~\ref{t:main-arc}, we outline the argument and concentrate on the adjustments needed in the present context. 

\begin{lemma}\label{l:Deltamax}
If $B\geq 0.287$, the maximum of $|\Delta(x+iB)|$ is
  attained when $x=\frac{1}{2}$.
\end{lemma}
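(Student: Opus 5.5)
The plan is to reduce to a Fourier-type inequality in $x$ and then verify it numerically. Write $q=e^{2\pi i(x+iB)}$, $r=e^{-2\pi B}$, so that $|q|=r$ does not depend on $x$. Since $\Delta=q\prod_{n\ge1}(1-q^n)^{24}$, we have $|\Delta(x+iB)|=r\,|\prod(1-q^n)|^{24}$. Expanding $\log(1-q^n)=-\sum_k q^{nk}/k$ and collecting terms gives
\[
\log|\Delta(x+iB)|=\log r-24\sum_{N\ge1}a_N\cos(2\pi Nx),\qquad a_N=\sigma_{-1}(N)\,r^N,
\]
with $\sigma_{-1}(N)=\sum_{d\mid N}d^{-1}$. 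The function is even and $1$-periodic in $x$, so it suffices to prove $F(x):=\sum_N a_N\big(\cos(2\pi Nx)-(-1)^N\big)\ge 0$ for $x\in[0,\tfrac12]$. Note that $F(\tfrac12)=0$.

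Substitute $t=2\pi x=\pi-s$ with $s\in[0,\pi]$. Then $\cos(Nt)-(-1)^N=(-1)^{N+1}\cdot 2\sin^2(Ns/2)$, so
\[
F=2\sum_{N\ \mathrm{odd}}a_N\sin^2(Ns/2)-2\sum_{N\ \mathrm{even}}a_N\sin^2(Ns/2).
\]
The odd terms help and the even terms hurt. The dominant positive term is $2a_1\sin^2(s/2)=r(1-\cos s)$, and I would compare every other term against it using $\sin^2(Ns/2)\le \min\big(1,N^2\sin^2(s/2)\big)$.

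\emph{Away from $s=0$}, say for $s\ge s_0$ with a fixed $s_0$: bound the even sum by $2\sum_{N\ \mathrm{even}}a_N$ and drop the odd terms with $N\ge3$. It then suffices that $r(1-\cos s_0)>2\sum_{N\,\mathrm{even}}\sigma_{-1}(N)r^N$. For $r\le e^{-2\pi\cdot0.287}\approx0.165$ the right-hand side is about $3r^2$, so this holds once $1-\cos s_0\gtrsim 3r$; a moderate $s_0$ will do.

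\emph{Near $s=0$}: this is the delicate regime and, I expect, the main obstacle, because the constant $0.287$ is presumably where the second-order condition at $x=\tfrac12$ becomes tight. Here I would Taylor expand, $F(s)=\tfrac{s^2}{2}\,\Phi(r)+O(s^4)$ with
\[
\Phi(r)=\sum_N(-1)^{N+1}N^2\sigma_{-1}(N)r^N=r-6r^2+12r^3-28r^4+\cdots.
\]
One first checks that $\Phi(r)>0$ for $r\le e^{-2\pi\cdot 0.287}$; the root of $\Phi$ should sit essentially at $B=0.287$. Then one controls the quartic remainder uniformly on $[0,s_0]$ using $|\sin^2 u-u^2+u^4/3|\le 2u^6/45$ together with the geometric decay of $a_N$. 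A cleaner alternative is to write $\sin^2(Ns/2)=\sin^2(s/2)\,U_{N-1}(\cos(s/2))^2$ with Chebyshev $U$. This gives $F=2\sin^2(s/2)\,P(\cos(s/2))$, where $P(c)=\sum(-1)^{N+1}a_N U_{N-1}(c)^2$. It would then suffice to show $P(c)>0$ on $[\cos(s_0/2),1]$, truncating the series at some $N_0$ and bounding the tail by $\sum_{N>N_0}N^2\sigma_{-1}(N)r^N$.

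Since $P$ is decreasing in $B$ termwise only after suitable grouping, I would finish by monotonicity: each bound above is increasing in $r$, so verifying everything at the extreme value $r=e^{-2\pi\cdot0.287}$, with a rigorous numeric check (interval arithmetic in Sage) of the truncated polynomial on a grid of $c$, covers all $B\ge0.287$.
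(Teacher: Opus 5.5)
Your outline is sound and follows a genuinely different route from the paper. The paper argues through the derivative. It writes $f'(x)=2\sum_n n e^{-2\pi Bn}\sin(nx)$ for $f=\log|\Delta(x+iB)|^2$, divides by $\sin x$, and bounds every $n\ge2$ term in absolute value via $|\sin(nx)/\sin x|\le n$. It then closes with the closed form $\sum_{n\ge2}n^2A^n=\frac{A^2+A}{(1-A)^3}-A<A$, valid for $A=e^{-2\pi B}<0.1648$; that is where $0.287$ comes from. This is short and computer-free, but the coefficients are wrong: the paper differentiates an expression with the logarithm dropped.

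As your own expansion shows, $\partial_x\log|\Delta(x+iB)|=48\pi\sum_N\sigma_1(N)r^N\sin(2\pi Nx)$, so $n$ should be $\sigma_1(n)$. The crude bound then requires $6A+12A^2+28A^3+\cdots<1$. At $A\approx0.1648$ the first two terms already give about $1.31$, and the inequality only holds for $B\gtrsim0.33$. The reason is the one you identify. Near $x=\tfrac12$ one has $\sin(Nt)/\sin t\to(-1)^{N+1}N$, and only the sign alternation encoded in your $\Phi$ keeps the quantity positive. So your approach is what actually delivers the constant $0.287$: the value comparison $F\ge0$ with the exact coefficients $\sigma_{-1}(N)r^N$, the factorization $F=2\sin^2(s/2)\,P(\cos(s/2))$, and an interval-arithmetic check of the truncated $P$ with tail bounded via $|U_{N-1}|\le N$. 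The price is a computer-assisted verification in place of a one-line series estimate.

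Three corrections to your sketch.

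First, $0.287$ is not where $\Phi$ vanishes. At $r_0=e^{-2\pi\cdot0.287}\approx0.1648$ one finds $\Phi(r_0)\approx0.037\approx0.23\,r_0$, and $\Phi$ changes sign only near $B\approx0.241$. This is consistent with the paper's remark that the lemma holds for $B\ge0.25$ and fails at $B=0.235$. The constant $0.287$ is an artifact of the crude bound, so your ``delicate'' regime actually has comfortable margin.

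Second, the final reduction to the single value $r=r_0$ is not justified as stated. After dividing by $r$, the tail estimate and the estimate away from $s=0$ are increasing in $r$. The truncated alternating polynomial $P_{N_0}(c;r)$ is not, and you never supply the ``suitable grouping.'' An easy fix: $P/r\ge 1-6r-\sum_{N\ge3}N\sigma_1(N)r^{N-1}$ for all $c\in[0,1]$. This lower bound is decreasing in $r$ and still about $0.05$ at $r=0.12$. So only $r\in[0.12,r_0]$ needs a genuine two-parameter interval check in $(c,r)$.

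Third, the Taylor-remainder option is not viable on all of $[0,s_0]$ with $s_0\approx1.06$, since $Ns/2$ is not small there for $N\ge2$. Use the Chebyshev form, which needs no small-$s$ assumption and can be checked directly on the whole range $c\in[0,1]$.
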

\begin{proof}
Consider the function
\[f(x)=\log |\Delta(x+iB)|^2=-2\pi B+\sum_{n=1}^\infty (1-2e^{-2\pi B n}\cos (nx)+e^{-4\pi B n})\]
on
$[-\frac{1}{2},\frac{1}{2}]$, with derivative $\displaystyle
f'(x)=2\sum_{n=1}^\infty ne^{-2\pi B n}\sin (nx)$. It suffices
to show that $f'(x)\geq 0$ on $[0,\frac{1}{2}]$ by symmetry. In
fact, we'll show that
\[\frac{f'(x)}{2\sin x}=e^{-2\pi B}+\sum_{n=2}^\infty ne^{-2\pi
  nB}\frac{\sin (nx)}{\sin x}\geq 0\]
on this interval. Writing $A=e^{-2\pi B}<0.1648$ we want to show
that $\displaystyle A>\sum_{n=2}^\infty nA^n\frac{\sin (nx)}{\sin x}$.
But Chebyshev polynomials imply that $|\frac{\sin (nx)}{\sin
  x}|\leq n$ and whenever $A<0.1648$ we have
\[A>\frac{A^2+A}{(1-A)^3}-A=\sum_{n=2}^\infty n^2A^n\geq \sum_{n=2}^\infty nA^n|\frac{\sin (nx)}{\sin x}|.\]
\end{proof}
\begin{remark}
With a little more care, one can show that Lemma \ref{l:Deltamax} holds for $B\geq 0.25$. However, it fails, for instance, when $B=0.235$.
\end{remark}

\begin{lemma}\label{l:IB-weak}
Let $\frac{\pi}{2}\leq \alpha\leq \beta \leq\frac{2\pi}{3}$. 
For each $B>0.287$ such that $\Im \gamma e^{i\theta}\neq B$ for any $\gamma$ as above and all $\theta\in [\alpha,\beta]$, there is a constant $h_{\alpha,\beta,B}$ such that
\[\int_{-\frac{1}{2}+iB}^{\frac{1}{2}+iB}|G(\tau,e^{i\theta})|d\tau\leq e^{-2\pi m B+h_{\alpha,\beta,B}}\left(\frac{|\Delta(e^{i\theta})|}{|\Delta(\frac{1}{2}+iB)|}\right)^\ell .\]
\end{lemma}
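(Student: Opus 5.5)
The plan is to repeat the proof of Lemma \ref{l:IB} verbatim, changing only the step where the $\ell$-th power is bounded. The sign of $\ell$ is what forces the change. In the weakly holomorphic setting $k\ll 0$, so $\ell<0$. The factor $\left(\frac{|\Delta(e^{i\theta})|}{|\Delta(\tau)|}\right)^{\ell}$ then equals $\left(\frac{|\Delta(\tau)|}{|\Delta(e^{i\theta})|}\right)^{|\ell|}$. It is therefore maximized over the horizontal segment where $|\Delta(x+iB)|$ is \emph{largest}, not smallest. (If one also wants the statement for $\ell\ge 0$, the bound from Lemma \ref{l:IB} applies there instead; the relevant case here is $\ell<0$.)

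First, write $G(\tau,e^{i\theta})$ as a product of three pieces: $\left(\Delta(e^{i\theta})/\Delta(\tau)\right)^{\ell}$, the factor $e^{2\pi i m\tau}$, and the remaining factor
\[\frac{E_{k'}(e^{i\theta})E_{14-k'}(\tau)}{\Delta(\tau)(j(\tau)-j(e^{i\theta}))}.\]
On the segment $\tau=x+iB$, $|x|\le\frac12$, the exponential has modulus exactly $e^{-2\pi mB}$. This holds for either sign of $m$, since it is an equality rather than an inequality. Pull the maximum over $|x|\le \frac12$ of $\left(|\Delta(e^{i\theta})|/|\Delta(x+iB)|\right)^{\ell}$ outside the integral, as in Lemma \ref{l:IB}.

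Second, because $\ell<0$, this maximum equals $|\Delta(e^{i\theta})|^{\ell}\,\bigl(\max_{|x|\le 1/2}|\Delta(x+iB)|\bigr)^{-\ell}$. Since $B\ge 0.287$, Lemma \ref{l:Deltamax} says the maximum of $|\Delta(x+iB)|$ is attained at $x=\frac12$. This gives exactly the factor $\left(|\Delta(e^{i\theta})|/|\Delta(\frac12+iB)|\right)^{\ell}$.

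Third, the remaining factor is continuous on the compact set $[\alpha,\beta]\times\{x+iB: |x|\le\frac12\}$. It has no singularities there: the hypothesis $\Im\gamma e^{i\theta}\ne B$ ensures $j(\tau)\ne j(e^{i\theta})$ on the segment, and $\Delta$ does not vanish on $\h$. Hence it is bounded by some $e^{h_{\alpha,\beta,B}}$. The segment has length $1$, so this factor integrates to at most $e^{h_{\alpha,\beta,B}}$, which gives the claimed bound.

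The only real content is the second step, and it is supplied entirely by Lemma \ref{l:Deltamax}. The restriction $B>0.287$ appears precisely so that this lemma applies. The remaining care is to check the hypothesis that keeps the poles of $1/(j(\tau)-j(e^{i\theta}))$ off the segment, so that the constant $h_{\alpha,\beta,B}$ is finite.
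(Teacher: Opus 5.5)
Your proposal is correct and is the paper's argument: rerun the proof of Lemma \ref{l:IB}, and since $\ell<0$, use Lemma \ref{l:Deltamax} to take the \emph{maximum} of $|\Delta(x+iB)|$, attained at $x=\frac12$, in place of the minimum at $x=0$. The remaining factor is then bounded by the same compactness argument, and your observation that the $\frac12+iB$ form is only valid for $\ell<0$ is consistent with how the lemma is used.
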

\begin{proof}
The proof is identical to Lemma \ref{l:IB}, except in using Lemma \ref{l:Deltamax} to show that if $\ell<0$ \[\displaystyle \max_{|x|\leq \frac{1}{2}}\left(\frac{|\Delta(e^{i\theta})|}{|\Delta(x+iB)|}\right)^\ell = \left(\frac{|\Delta(e^{i\theta})|}{|\Delta(\frac{1}{2}+iB)|}\right)^\ell.\] 
\end{proof}

\begin{lemma}\label{l:miller-approx-arc-weak}
Let $\alpha, \beta$ as above and $0.287<B<\sin \alpha$, and let $k<0$. 
There exists a constant $I^{\w}(\alpha,\beta,B)\geq 1$ such that 
for all $\theta\in [\alpha,\beta]$ we have
\[\overline{g}_{k,m}(e^{i \theta})=2\cos\left(2\pi m\cos
(\theta)+k\theta/2\right)+o(1),\]
as $k\to -\infty$ and $\frac{m}{\ell}>I^w(\alpha,\beta,B)$.
\end{lemma}
\begin{proof}
As in Lemma \ref{l:miller-approx-arc-weak}, write $\delta=\frac{m}{\ell}\in [1,\infty)$ and denote $\mathcal{R}$ the finite set of elements $\gamma\in \SL_2(\mathbb{Z})$ such that $\Im \gamma e^{i\theta}\geq B$ for some $\theta\in [\alpha,\beta]$, excluding $I_2$ and $S$. From Lemma \ref{l:IB-weak} we conclude that 
\[\overline{g}_{k,m}(e^{i \theta})-2\cos\left(2\pi m\cos
(\theta)+k\theta/2\right)=o(1)\]
as $k\to-\infty$ as long as 
\begin{align}\label{eq:upperboundweak}
2\pi \delta(\sin\theta-B) +\ln
  |\Delta(e^{i\theta})|-\ln|\Delta(\frac{1}{2}+iB)|&>0\\
  2\pi\delta \sin\theta(1-|ce^{i\theta}+d|^{-2})-12\ln|ce^{i\theta}+d|&>0\ \ \forall \gamma\in \mathcal{R}.\notag
\end{align}
The bounds \eqref{eq:upperboundweak} are guaranteed for
all such $\theta\in [\alpha,\beta]$ if
\begin{align}\label{eq:upperboundabweak}
2\pi \delta (\sin\beta-B) -
  (\ln|\Delta(\frac{1}{2}+iB)|-\ln|\Delta(e^{i\alpha})|)&>0\\
  2\pi
  \delta\sin\beta(1-|ce^{i\beta}+d|^{-2})-12\ln|ce^{i\alpha}+d|&>0\ \ \ \forall \gamma\in \mathcal{R}\ \ cd\geq 0\notag\\
  2\pi
  \delta \sin\beta(1-|ce^{i\alpha}+d|^{-2})-12\ln|ce^{i\beta}+d|&>0\ \ \ \forall \gamma\in \mathcal{R}\ \ cd<0.\notag
\end{align}
Finally, consider $I^{\w}(\alpha, \beta, B)$ the
maximum of the following:
\begin{itemize}
\item $\dfrac{\ln|\Delta(\frac{1}{2}+iB)|-\ln|\Delta(e^{i\alpha})|}{2\pi
  (\sin\beta-B)}$
\item $\dfrac{12\ln|ce^{i\alpha}+d|}{2\pi\sin \beta
  (1-|ce^{i\beta}+d|^{-2})}$ for all $cd\geq 0$ and
\item $\dfrac{12\ln|ce^{i\beta}+d|}{2\pi\sin \beta (1-|ce^{i\alpha}+d|^{-2})}$ for all $cd< 0$.
\end{itemize}
\end{proof}

\begin{theorem}\label{t:main-arc-weak}
There exists a piece-wise constant function $\mathcal{P}_-(x)$, whose graph appears below, such that for $k\ll 0$ the proportion of roots of
$g_{k,m}$ on the boundary arc is at least
$\mathcal{P}_-(\frac{m}{\ell})$.

Moreover, there exists a piece-wise linear function $\mathcal{T}_-:[0,1]\to [\frac{\pi}{2},\frac{2\pi}{3}]$, such that for $\delta>1$, roots of $g_{k,m}$ equidistribute on the arc $[\mathcal{T}_-(\delta),\frac{2\pi}{3}]$ as $\frac{m}{\ell}>\delta$. 
\end{theorem}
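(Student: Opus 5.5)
The plan is to mirror the proof of Theorem \ref{t:main-arc}, now for $k<0$ (so $\ell<0$, $\delta=\frac{m}{\ell}\geq 1$, $D=\ell-m>0$), using Lemma \ref{l:miller-approx-arc-weak} in place of Lemma \ref{l:miller-approx-arc}.

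\textbf{Step 1: subdivide the arc and compute thresholds.} Subdivide $[\frac{\pi}{2},\frac{2\pi}{3}]$ into $N=1000$ arcs $I_r=[\alpha_r,\beta_r]$. On each arc, choose $B_r\in(\max(0.287,\frac12\tan\frac{\beta_r}{2}),\sin\alpha_r)$. This choice makes the set $\mathcal{R}$ empty, so only the first quantity in the definition of $I^{\w}$ matters, and it keeps Lemma \ref{l:Deltamax} available. A Sage search then minimizes $\delta^{\w}_r=I^{\w}(\alpha_r,\beta_r,B_r)$ over $B_r$. I expect by inspection that $\delta^{\w}_r$ is increasing in $r$, i.e. the hardest region is near $\rho$, as in the holomorphic case. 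This monotonicity is an empirical check, as it was there.

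\textbf{Step 2: lower bound for the zero count.} For $\delta>1$, let $r_0$ be minimal such that $\delta>\delta^{\w}_r$ for all $r\geq r_0$. Lemma \ref{l:miller-approx-arc-weak} then gives $\overline{g}_{k,m}(e^{i\theta})=2\cos(k\theta/2+2\pi m\cos\theta)+o(1)$ on $[\alpha_{r_0},\frac{2\pi}{3}]$.

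To count sign changes as in Proposition \ref{p:arc-count}, we need $\phi(\theta)=k\theta/2+2\pi m\cos\theta$ to be monotone there. Since $k,m<0$, we have $\phi'(\theta)=k/2-2\pi m\sin\theta$, and this is negative provided $\delta<\frac{3}{\pi}\cdot\frac{1}{\sin\theta}\cdot(\text{lower order})$. Concretely this needs $|k|/2>2\pi|m|\sin\theta$, which holds when $\delta<\frac{3}{\pi\sin\theta}$, i.e. roughly $\delta<\frac{2\sqrt3}{\pi}\approx1.10$ near $\rho$ and $\delta<\frac{3}{\pi}$ near $i$. I will restrict to the range where this holds, or else count only on the subarc where $\phi$ is monotone.

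The number of sign changes is then at least
\[\Big\lfloor \tfrac{k\alpha_{r_0}}{2\pi}+2m\cos\alpha_{r_0}\Big\rfloor-\Big\lceil\tfrac{k}{3}-m\Big\rceil,\]
up to orientation. Dividing by $D=\ell-m$ with $k=12\ell+k'$ gives an explicit expression in $\delta$ and $\alpha_{r_0}$, minus $O(1/|\ell|)$. We define $\mathcal{P}_-(\delta)$ to be this value clipped to $[0,1]$ on each interval $(\delta^{\w}_{r_0},\delta^{\w}_{r_0-1}]$. It equals $1$ once $\delta>\delta^{\w}_0\approx1.1598$, because then the count equals $\ell-m$, the total number of zeros in $\mathcal{F}$, by the valence formula. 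It vanishes below $\approx1.1026$.

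\textbf{Step 3: equidistribution.} Set $\mathcal{T}_-(\delta)=\alpha_{r_0}$, and take its piecewise linear upper hull. Equidistribution on $[\mathcal{T}_-(\delta),\frac{2\pi}{3}]$ then follows from Raveh's argument \cite[\S3.4]{raveh:miller}, which applies on any arc where the cosine asymptotic holds uniformly.

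\textbf{Main obstacle.} The delicate point is the sign and monotonicity bookkeeping of $\phi$ for negative $k,m$, together with verifying that the monotonicity condition is compatible with the thresholds $\delta^{\w}_r$. The rest is numerical computation in Sage.
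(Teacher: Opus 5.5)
Your route is the paper's own. Its proof of Theorem \ref{t:main-arc-weak} just says it is identical to Theorem \ref{t:main-arc} plus a Sage search: subdivide, optimise $B_r$ in Lemma \ref{l:miller-approx-arc-weak}, count sign changes, cite Raveh. But in moving to $k<0$ you have two orientations backwards, and one of them is exactly the ``main obstacle'' you leave open.

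\emph{Orientation of the thresholds.} The hard end of the arc is $i$, not $\rho$.
\begin{itemize}
\item At $\theta=\frac{\pi}{2}$ the quotient $\frac{\ln|\Delta(\frac12+iB)|-\ln|\Delta(e^{i\theta})|}{2\pi(\sin\theta-B)}$ has minimum $\approx 1.1598$ over $B$, attained near $B\approx 0.8$.
\item As $\theta\to\frac{2\pi}{3}$, expand to first order using $\partial_B\ln|\Delta(\frac12+iB)|=-2\pi E_2(\frac12+iB)$ and $\partial_\theta\ln|\Delta(e^{i\theta})|=-2\pi\Re\bigl(e^{i\theta}E_2(e^{i\theta})\bigr)$. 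The quotient then tends to $E_2(\rho)=\frac{3}{\pi\Im\rho}=\frac{2\sqrt3}{\pi}\approx 1.1027$ for every $B\in(\frac12\tan\frac{\theta}{2},\sin\theta)$.
\end{itemize}
These are the paper's two cutoffs, so $\delta^{\w}_r$ \emph{decreases} in $r$. Your Step 2 already presupposes this: it uses a final segment $r\geq r_0$, the arc $[\alpha_{r_0},\frac{2\pi}{3}]$, and $\mathcal{P}_-=1$ above $\delta^{\w}_0$. So the sentence in Step 1 contradicts the rest of your argument.

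\emph{Monotonicity of the phase.} With $k=12\ell+k'$, $m=\delta\ell$ and $\ell<0$,
\[\phi'(\theta)=|\ell|(2\pi\delta\sin\theta-6)+\tfrac{k'}{2}.\]
\begin{itemize}
\item The condition you propose, $\phi'<0$, means $\delta<\frac{3}{\pi\sin\theta}\leq\frac{2\sqrt3}{\pi}$.
\item The cosine approximation is available only for $\delta>\delta^{\w}_r\gtrsim\frac{2\sqrt3}{\pi}$. Up to negligible slivers these ranges are disjoint, so restricting to $\phi'<0$ leaves essentially nothing.
\item The relevant fact is the reverse inequality. If $\delta>\frac{2\sqrt3}{\pi}$ then $2\pi\delta\sin\theta>6$ on all of $[\frac{\pi}{2},\frac{2\pi}{3}]$, so $\phi$ is increasing.
\item The count on $[\alpha_{r_0},\frac{2\pi}{3}]$ is then $\lfloor\frac{k}{3}-m\rfloor-\lceil\frac{k\alpha_{r_0}}{2\pi}+2m\cos\alpha_{r_0}\rceil$, which equals $\ell-m$ when $\alpha_{r_0}=\frac{\pi}{2}$.
\end{itemize}
So the side condition $\delta<\frac{3}{\pi}$ of Proposition \ref{p:arc-count} becomes the lower bound $\delta>\frac{2\sqrt3}{\pi}$. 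This is precisely the limiting threshold at $\rho$, so it costs essentially nothing. The compatibility you were worried about is automatic.

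\emph{The window for $B_r$.} You inherited the hypothesis $B<\sin\alpha$ from Lemma \ref{l:miller-approx-arc-weak}, but in the weak case the worst value of $\sin\theta-B$ is at $\theta=\beta$.
\begin{itemize}
\item Reading $\delta>I^{\w}$ off \eqref{eq:upperboundabweak} therefore requires $B<\sin\beta$. For $B\geq\sin\beta$ the denominator is $\leq 0$, and your minimisation over $B_r$ is meaningless.
\item Since $\frac12\tan\frac{\beta}{2}<\sin\beta$ only when $\beta<\frac{2\pi}{3}$, the last arc admits no $B_r$ with $\mathcal{R}=\emptyset$.
\item A neighbourhood of $\rho$ must instead be handled as in Duke--Jenkins. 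Keep the residue at $-1/(\tau+1)$; near $\rho$ its normalised size is at most $1$ when $\delta\sin\theta>\frac{3}{\pi}$. Then use only that the total error is $<2$.
\end{itemize}
This is what your claim $\mathcal{P}_-=1$ above $1.1598$ actually requires.
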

\begin{proof}
The proof is identical to that of Theorem \ref{t:main-arc}, using an explicit search in Sage.
\end{proof}


\begin{figure}[H]
\centering
\includegraphics[scale=0.15]{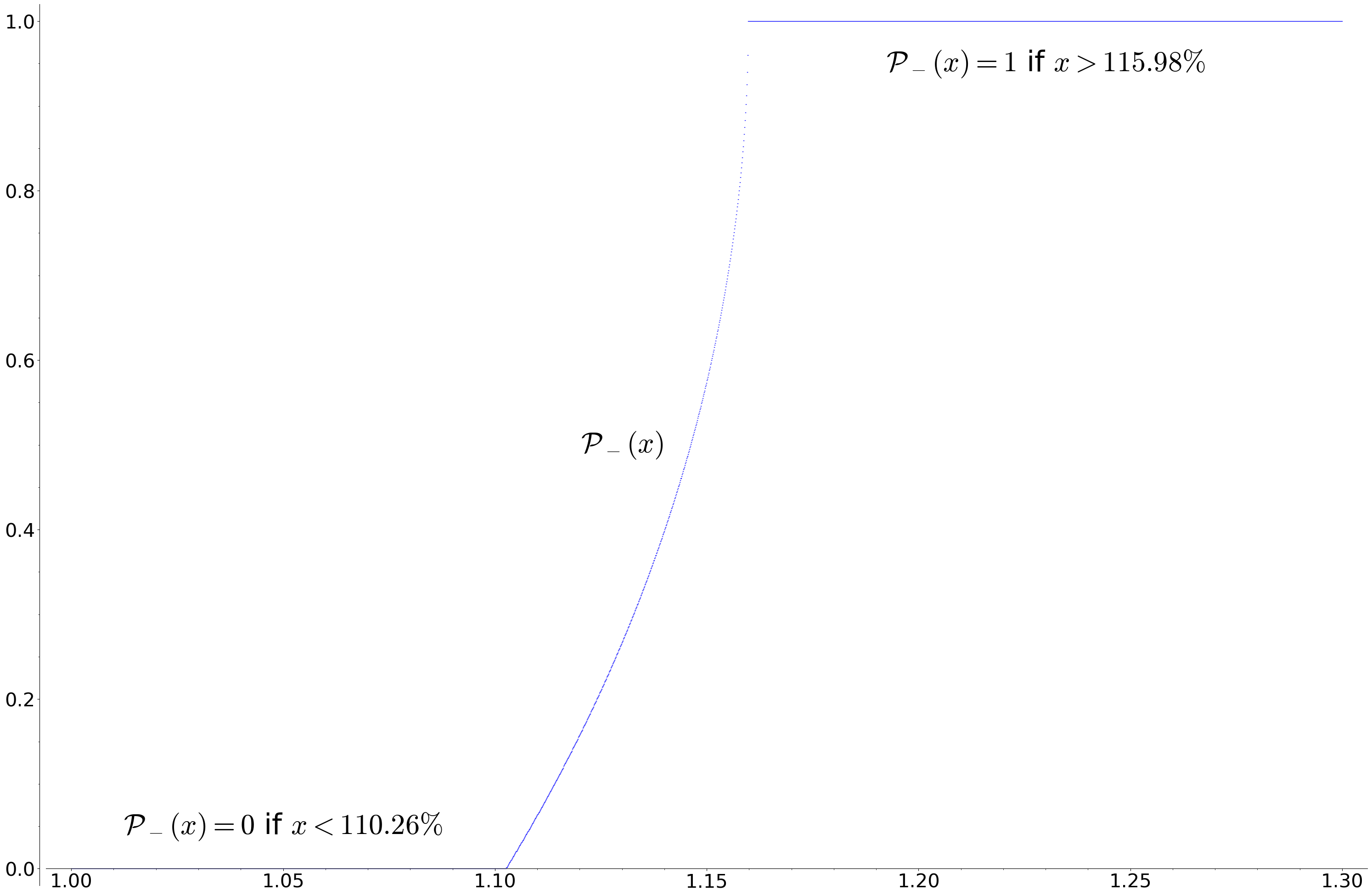}
\caption{Lower bound $\mathcal{P}_-$ for the proportion of roots on $\mathcal{A}$}
\end{figure}

\begin{figure}[H]
\centering
\includegraphics[scale=0.15]{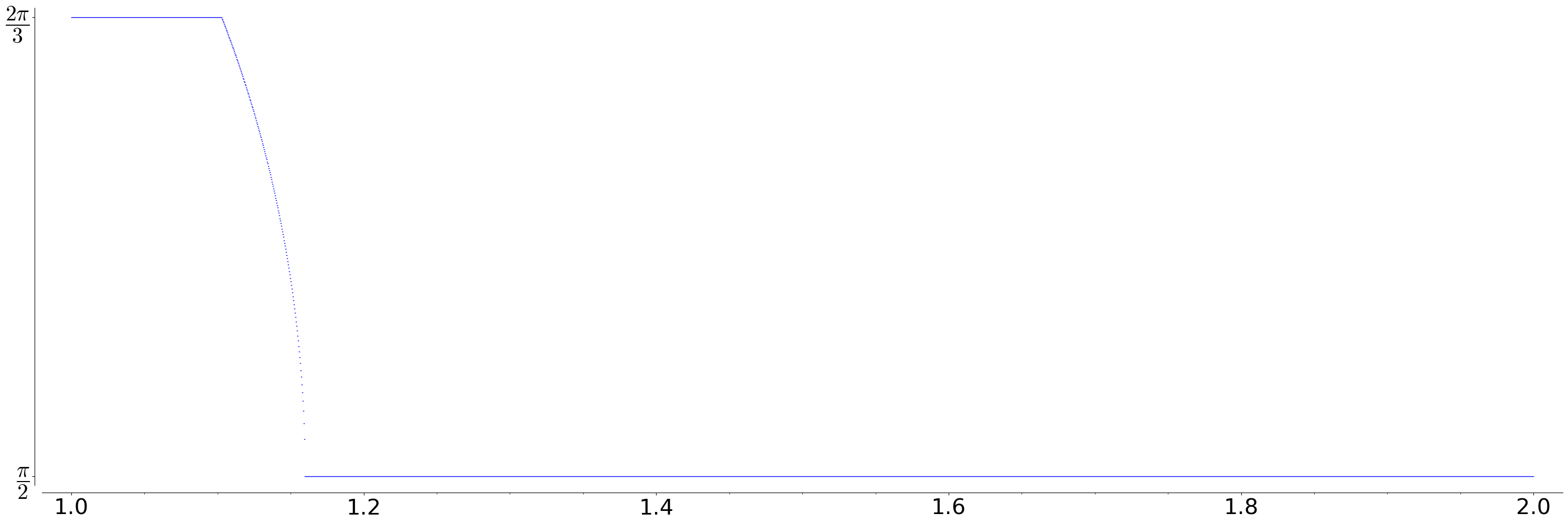}
\caption{Angle $\mathcal{T}_-(\delta)$ with $\mathcal{A}_{\frac{\pi}{2},\mathcal{T}_-(\delta)}$ containing Miller zeros}
\end{figure}

\section{Proof of Theorem B}

Consider again the Miller basis $g_{k,m}=q^m+O(q^{\ell+1})$ of weight $k=12\ell+k'$, with $k'\in \{0,4,6,8,10,14\}$. For some monic polynomial $P$ of degree $D=\ell-m$ we have
\[\frac{g_{k,m}}{\Delta^\ell E_{k'}}=P(j),\] where $j=E_4^3/\Delta$ is the usual $j$-invariant.  
We refer to $P$ as the Faber polynomial of $g_{k,m}$. Writing
\[ P(X)=\sum_{d=0}^D y_dx^{D-d} \quad  y_0=1,\] we will be concerned with estimating the coefficients $y_d$ when $D$ grows with $k$, albeit more slowly.  
For all $r\ge 1$ we also set \[ j^r=\sum_{n=-r}^\infty c_{r,n}q^n, \]  so that
\[P(j)=\sum_{d=0}^D y_{d}j^{D-d}=\sum_{d=0}^D\sum_{n\geq
  -(D-d)}y_dc_{D-d,n}q^{n}.\]
Since
\[\frac{g_{k,m}}{\Delta^\ell E_{k'}}=q^{-D}\frac{1+O(q^{D+1})}{(q^{-1}\Delta)^\ell E_{k'}},\]
it follows that
\[(q^{-1}\Delta)^{-\ell}(E_{k'})^{-1}=\sum_{d=0}^D\sum_{n=
  -(D-d)}^0y_dc_{D-d,n}q^{D+n}+O(q^{D+1}).\]
On the other hand, we may expand the power series formally as
\[(q^{-1}\Delta)^{-\ell}(E_{k'})^{-1}=\sum_{d=0}^\infty\mathcal{D}_{\ell,d}q^d.\]
\begin{lemma}\label{l:rudnick-coeffs}
  \begin{enumerate}
\item If $d=O(|k|^{\frac{1}{2}-\varepsilon})$ for some $\varepsilon\in (0,\frac{1}{2})$ then
\[\mathcal{D}'_{\ell,d}, \mathcal{D}_{\ell,d}=\frac{(2|k|)^d}{d!}\left(1+O\left(|k|^{-2\varepsilon}\right)\right).\]
\item If $d\leq |\ell|$ then
\[\mathcal{D}'_{\ell,d}, \mathcal{D}_{\ell,d}=O\left(\left(\frac{2 \alpha
  |k|}{d}\right)^d\right),\]
where $\alpha=\left(\frac{25}{24}\right)^{25}$.
\end{enumerate}
\end{lemma}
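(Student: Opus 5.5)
Both parts follow from the product formula for $\Delta$. We have
\[(q^{-1}\Delta)^{-\ell}=\prod_{n\ge1}(1-q^n)^{-24\ell}=\exp\Bigl(24\ell\sum_{n\ge1}\frac{\sigma_1(n)}{n}q^n\Bigr)=\exp\bigl(aq+aH(q)\bigr),\]
where $a=24\ell$ and $H(q)=\sum_{n\ge2}\frac{\sigma_1(n)}{n}q^n$. Note that $a=2k-2k'$, so $a^d=(2k)^d(1+O(d/|k|))$ whenever $d=o(|k|)$. Also $E_{k'}^{-1}=\sum_n e_nq^n$ is a fixed power series. Its coefficients satisfy $|e_n|\le C c_0^n$ with $c_0=e^{\pi\sqrt3}$, because the nearest zero of $E_{k'}$ in the $q$-disc is at $\rho$ or $i$, with $|q|\ge e^{-\pi\sqrt3}$. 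So $\mathcal D_{\ell,d}$ is the convolution of the coefficients of $\exp(aq+aH)$ with $(e_n)$. I expect $\mathcal D'_{\ell,d}$ to denote the coefficient without the $E_{k'}^{-1}$ factor, or a companion series of the same shape; the same argument should cover it.

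For (1), write $\exp(aH(q))=\sum_j h_jq^j$. Each $h_j$ is a sum over multisets of parts $\ge2$ summing to $j$, each weighted by $a^{\#\text{parts}}$ times bounded coefficients. Hence $h_0=1$, $h_1=0$, and $|h_j|\le C_1^j|a|^{\lfloor j/2\rfloor}$. The coefficient of $q^d$ in $\exp(aq+aH)$ is therefore
\[\frac{a^d}{d!}\sum_{j=0}^{d}h_j\,a^{-j}\frac{d!}{(d-j)!}.\]
The $j$-th term is bounded by $(C_1 d)^j|a|^{-j/2}\ll (C_1|k|^{-\varepsilon})^j$. The $j=0$ term is $1$, the $j=1$ term vanishes, and the tail $j\ge2$ is $O(|k|^{-2\varepsilon})$. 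Convolving with $(e_n)$ changes the sum by $\sum_{n\ge1}e_n\frac{d!}{(d-n)!}a^{-n}(\cdots)$, whose terms are $O((c_0d/|a|)^n)$. This is $O(d/|k|)=O(|k|^{-1/2-\varepsilon})$, which is absorbed. Finally, $(a/2k)^d=1+O(d/|k|)$ gives (1).

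For (2), I would use a Cauchy estimate on $|q|=r$ with $r=d/|a|\le 1/24$ (using $d\le|\ell|$). On this circle,
\[\Bigl|\exp\Bigl(a\sum\tfrac{\sigma_1(n)}{n}q^n\Bigr)\Bigr|\le\exp\Bigl(|a|\sum_n n r^n\Bigr)=\exp\bigl(|a|r(1-r)^{-2}\bigr)\le\exp\bigl(d\,(24/23)^2\bigr).\]
The Cauchy estimate then yields $(|a|/d)^d\exp(d(24/23)^2)$. Since $e^{(24/23)^2}\le(25/24)^{25}$ (about $2.97$ versus $2.77$ would fail, so the constants need tuning), one sharpens the geometric sum bound, e.g.\ via $\sigma_1(n)/n\le$ a smaller majorant, to land exactly on $\alpha=(25/24)^{25}$. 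The factor $E_{k'}^{-1}$ is handled by convolution rather than on the circle, since $r$ may exceed $e^{-\pi\sqrt3}$. One bounds $\sum_n |e_n|\,\bigl(|a|/(d-n)\bigr)^{d-n}\alpha^{d-n}$. Using $\bigl(|a|/(d-n)\bigr)^{d-n}\le(|a|/d)^d(d/|a|)^n e^{n}$, the ratio of consecutive terms is $O(c_0 e d/|a|)$. Because the target bound tolerates constant factors, this sum is $O((2\alpha|k|/d)^d)$ provided $d/|a|$ is small. For $d$ comparable to $|\ell|$, one absorbs the geometric factor into the slack between the exponent constant and $\alpha$.

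The main obstacle is this last bookkeeping: getting the precise constant $\alpha$ while controlling the $E_{k'}^{-1}$ convolution uniformly up to $d=|\ell|$. I would try first to choose the Cauchy radius as $r=d/(25|\ell|)$ or similar, so that both the exponential factor and the shift by $n$ fit inside $(25/24)^{25}$.
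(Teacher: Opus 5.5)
Your part (1) is correct, and it is cleaner than the paper's route. The paper expands $\prod(1-q^n)^{-2k}$ over partitions $\sum i r_i=d$, applies Stirling to each $\binom{2k+r_i-1}{r_i}$, and then shows that the term $r_1=d$ dominates the Laguerre-type sum $\sum_j\binom{d-1}{j-1}\frac{(2k)^j}{j!}$. You instead split off $e^{aq}$ and use $\exp(aH)=1+O(q^2)$ with $|h_j|\le C_1^j|a|^{\lfloor j/2\rfloor}$, which gives the relative error $O(d^2/|k|)$ in one step. Two small corrections:
\begin{itemize}
\item For $k'=6,10,14$ the zero of $E_{k'}$ at $i$ has $|q|=e^{-2\pi}<e^{-\pi\sqrt3}$. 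So the right bound is $|e_n|\ll e^{2\pi n}$, as in the paper; this is harmless for (1).
\item For $k<0$ your computation gives $(2k)^d/d!=(-1)^d(2|k|)^d/d!$, which is the correct sign.
\end{itemize}

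Part (2) has a genuine gap, and it is not a matter of tuning constants. With $\alpha=(25/24)^{25}$ the bound is false once $d$ is a positive proportion of $|\ell|$, so no sharper majorant or choice of radius can reach it. The constant is calibrated on the single term $r_1=d$: for $k=12\ell$ and $d=\ell$ one has $\binom{25\ell-1}{\ell}\asymp \ell^{-1/2}(24\alpha)^{\ell}$, and $(24\alpha)^\ell$ is exactly $(2\alpha k/d)^d$. But the full coefficient is $[q^0]\Delta^{-\ell}$. Your own Cauchy estimate, with the exact weights $\sigma_1(n)/n$ and the optimal radius, gives $\Delta(iy_0)^{-\ell}\approx 69.1^{\ell}$, where $E_2(iy_0)=0$ and $y_0\approx 0.5235$. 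By the saddle-point (large powers) theorem this is sharp up to a polynomial factor, whereas $24\alpha\approx 66.6$. Elementarily, the single nonnegative term with $r_1=0.92\ell$, $r_2=0.04\ell$ is already $e^{(4.233+o(1))\ell}$, against $(24\alpha)^{\ell}=e^{4.1986\ell}$.

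The factor $E_{k'}^{-1}$ makes things worse. For $k=12\ell+6>0$ the series $1-E_6$ has nonnegative coefficients, so $e_d\ge 504^d$ and hence $\mathcal{D}_{\ell,d}\ge e_d\ge 504^d$. This beats $(2\alpha k/d)^d$ as soon as $d>2\alpha k/504\approx 0.011k$, well inside $d\le \ell\approx k/12$. Your ratio argument needs $c_0 e d/|a|<1$, and beyond that range there is no slack to absorb anything.

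The paper's proof has the same hole. It asserts that $r_1=d$ dominates, but the last two terms of $\sum_j\binom{d-1}{j-1}x^j/j!$ have ratio $x/(d(d-1))$ with $x\asymp|k|$, so this fails once $d\gg\sqrt{|k|}$. Its convolution with $\beta_n=O(e^{2\pi n})$ is likewise not controlled for large $d$. What your Cauchy estimate does establish is $\mathcal{D}'_{\ell,d}=O\bigl((2\alpha'|k|/d)^d\bigr)$ uniformly for $d\le|\ell|$ with $\alpha'=e^{(24/23)^2}$; no constant below $1/(24\Delta(iy_0))\approx 2.88$ can work. You also get the same shape of bound for $\mathcal{D}_{\ell,d}$, but only for $d\le c|k|$ with a small absolute $c$. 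That corrected statement is what you should prove instead of aiming for $\alpha$.
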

\begin{proof}
Suppose $k>0$, as the proof when $k<0$ is almost identical.
As in \cite[\S4]{rudnick:faber}, we have 
\[\mathcal{D}'_{\ell,d}=\sum_{\sum i
  r_i=d}\prod_i\binom{2k+r_i-1}{r_i}.\]
By the Stirling approximation, we have
\begin{align*}
  \binom{2k+r-1}{r}&=\frac{2k}{2k+r}\binom{2k+r}{r}\\
  &=\frac{2k}{2k+r}\frac{(2k+r)^{2k+r}}{r!(2k)^{2k}}\frac{1}{e^r}\sqrt{\frac{2k+r}{2k}}\left(1+\frac{1}{24k}+O(k^{-2})\right)\\
  &=\frac{(2k)^r}{r!}\left[\frac{1}{e}\left(1+\frac{r}{2k}\right)^{1+\frac{2k}{r}-\frac{1}{2r}}\right]^r\left(1+\frac{1}{24k}+O(k^{-2})\right).
\end{align*}

(1) If $r\leq d=O(k^{\frac{1}{2}-\varepsilon})$, we see that
\begin{align*}
\log
  \left[\frac{1}{e}\left(1+\frac{r}{2k}\right)^{1+\frac{2k}{r}-\frac{1}{2r}}\right]^r&=(r+2k-\frac{1}{2})\log
                                                                            \left(1+\frac{r}{2k}\right)-r\\
  &=(r+2k-\frac{1}{2})\left(\frac{r}{2k}-\frac{r^2}{8k^2}+O\left(\frac{r^3}{k^3}\right)\right)-r\\
  &=\frac{r^2}{4k}-\frac{r}{4k}+\frac{r^2}{16k^2}+O\left(\frac{r^3}{k^2}\right)\\
 &=\frac{r^2}{4k}+O\left(\frac{r}{k}\right)=O(k^{-2 \varepsilon}).
\end{align*}
We conclude that
\begin{align*}
\sum_{\sum i
  r_i=d}\prod_i\binom{2k+r_i-1}{r_i}&=\sum_{\sum i
  r_i=d}\prod_i\frac{(2k)^{r_i}}{r_i!} \left(1+\frac{r_i^2}{4k}+O\left(\frac{r}{k}\right)\right)\left(1+\frac{1}{24k}+O(k^{-2})\right)^{d'},
\end{align*}
where $d'<\sqrt{2d}$ is the number of nonzero $r_i$. Thus
$\left(1+\frac{1}{24k}+O(k^{-2})\right)^{d'}=1+O(k^{-\frac{3}{4}-\frac{\varepsilon}{2}})$,
while $\prod
\left(1+\frac{r_i^2}{4k}+O\left(\frac{r}{k}\right)\right)=1+O(k^{-2
  \varepsilon})$ for any choice of $r=(r_1,\ldots,
r_d)$. Therefore, it suffices to verify that the term with
$r=(d,0,\ldots,0)$ dominates the sum.

Note that the sum
\begin{align*}
\sum_{\sum i
  r_i=d}\prod_i\frac{(2k)^{r_i}}{r_i!}&=\sum_{j=1}^d\binom{d-1}{j-1}\frac{(2k)^j}{j!}
\end{align*}
can be computed using the Laguerre polynomial
$L_d^{-1}(-2k)$. The terms in the latter sum are increasing in
$j$ as $d=O(k^{\frac{1}{2}-\varepsilon})$ so
\begin{align*}
\sum_{\sum i
  r_i=d}\prod_i\frac{(2k)^{r_i}}{r_i!}&=\frac{(2k)^d}{d!}+\frac{(2k)^{d-1}}{(d-1)!}+O\left(\frac{(d-2)(2k)^{d-2}}{(d-2)!}\right)\\
  &=\frac{(2k)^d}{d!}\left(1+\frac{d}{2k}+O\left(\frac{d^3}{k^2}\right)\right)
\end{align*}
and the desired formula follows.

(2) If $r\leq d\leq \ell$, we have
\[\left[\frac{1}{e}\left(1+\frac{r}{2k}\right)^{1+\frac{2k}{r}-\frac{1}{2r}}\right]^r<\left[\frac{1}{e}\left(1+\frac{r}{2k}\right)^{1+\frac{2k}{r}}\right]^r<e^{-r}\alpha^r,\]
as $\frac{r}{2k}\leq \frac{\ell}{2k}=\frac{1}{24}$. As above,
the term that dominates corresponds to $r_1=d$, in which case
Stirling's approximation gives
\[\mathcal{D}'_{\ell,d}=O\left(\frac{(2 e^{-1}\alpha k)^d}{d!}\right)=O\left(\left(\frac{2 \alpha k}{d}\right)^d\right).\]

Turning our attention to the coefficients $\mathcal{D}_{\ell,d}$, we recall that the coefficients in the expansion
\[\frac{1}{E_{k'}}=\sum_{n\ge 0} \beta_n q^n,\]
satisfy $\beta_n = O(e^{2\pi n})$. Indeed, when $k'=6,10,14$ this follows from \cite[Theorem 1]{heim-neuhauser}; for $k'=4$ one has 
 $\beta_n=O(e^{\pi\sqrt{3}n})$ by \cite[Theorem 3]{heim-neuhauser}), and the statement for $k'=8$ is clear since $E_8=E_4^2$. Thus, 
\[\mathcal{D}_{\ell,d}=\sum_{i=0}^d
\mathcal{D}'_{\ell,i}\beta_{d-i}=\frac{(2k)^d}{d!}\left(1+O\left(\frac{1}{k^{\frac{1}{2}+\varepsilon}}\right)\right),\]
if $d=O(k^{\frac{1}{2}-\varepsilon})$, while if $d\leq \ell$, we have $\mathcal{D}_{\ell,d}=O\left(\left(\frac{2 \alpha
  k}{d}\right)^d\right)$.

\end{proof}

Finally, we recall some estimates for the coefficients of the powers of $j$. Namely, according to \cite[Proposition 4.1]{brisebarre-philibert}, for all $r\ge 1$ and $n\in \Z$ 
such that $n\ge -r$ we have
\[ c_{r,n} \leq e^{2\pi n}1728^r.\] Sharper estimates are available when $-r+1\le n\le -rc$, where $c=\dfrac{e^{2\pi}}{1728}$. In this range, \cite[Proposition 4.2]{brisebarre-philibert} 
says that
\[ c_{r,n}\le (1728-e^{2\pi}) ^{r+n} \frac{ (-n)^n r^r} { (r+n)^{r+n} }.\] 


Having assembled the necessary ingredients, we proceed to the central technical component of this section.

\begin{proposition}\label{p:faber-coefficients}
If $D=O(|k|^{1/2-\varepsilon})$ for some $\varepsilon>0$, we have
\[y_d=\frac{(2|k|)^d}{d!}\left(1+O(|k|^{-2 \varepsilon})\right),\]
for each $0\leq d\leq D$.
\end{proposition}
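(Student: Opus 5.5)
The approach is to turn the identity coming from the Faber expansion into a triangular linear system for the $y_d$ and solve it by induction on $d$. Comparing coefficients of $q^{D+n}$ for $-D\le n\le 0$, i.e. of $q^{s}$ with $s=D+n\in[0,D]$, gives
\[
\mathcal{D}_{\ell,s}=\sum_{d=0}^{s} y_d\, c_{D-d,\,s-D},
\]
since only $d\le s$ contributes ($n\ge -(D-d)$). The diagonal coefficient $c_{D-s,-(D-s)}=1$ because $j^r=q^{-r}+\dots$. So
\[
y_s=\mathcal{D}_{\ell,s}-\sum_{d=0}^{s-1} y_d\,c_{D-d,\,s-D}.
\]
Lemma \ref{l:rudnick-coeffs}(1) already gives $\mathcal{D}_{\ell,s}=\frac{(2|k|)^s}{s!}(1+O(|k|^{-2\varepsilon}))$, because $s\le D=O(|k|^{1/2-\varepsilon})$. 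It therefore suffices to show that the correction sum is $\frac{(2|k|)^s}{s!}\,O(|k|^{-2\varepsilon})$.

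I would prove by strong induction on $s$ the crude bound $|y_d|\le 2\frac{(2|k|)^d}{d!}$ for all $d<s$. Each correction term involves $c_{r,n}$ with $r=D-d$ and $n=s-D$, so $r+n=s-d=:t\ge1$ is the gap and $n=-(r-t)$ is near $-r$. For these coefficients I would avoid the Brisebarre--Philibert bounds, which lose a factor $1728^r$ and are too big when $D$ is large compared with the gap. Instead I would use the exact structure near the leading term:
\[
j^r=q^{-r}(1+744q+\dots)^r,
\]
so $c_{r,-r+t}$ is the $t$-th coefficient of $(q j)^r$. Since $qj$ has nonnegative coefficients bounded by $e^{4\pi\sqrt{m}}$-type growth, majorization gives $|c_{r,-r+t}|\le \binom{r+t-1}{t}C^t$ for some absolute $C$, or more crudely $|c_{r,-r+t}|\le (C r)^t$ for $t\ge1$. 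With $r\le D$ this means
\[
|y_d c_{D-d,s-D}|\le 2\frac{(2|k|)^d}{d!}(CD)^{s-d}.
\]
Dividing by $\frac{(2|k|)^s}{s!}$, the term with gap $t$ is at most
\[
2\,\frac{s!}{d!}\Big(\frac{CD}{2|k|}\Big)^t\le 2\Big(\frac{CDs}{2|k|}\Big)^t\le 2\Big(\frac{CD^2}{2|k|}\Big)^t .
\]
Because $D^2=O(|k|^{1-2\varepsilon})$, the ratio is $O(|k|^{-2\varepsilon})$. The geometric series over $t\ge1$ is then dominated by $t=1$, giving a total correction of $O(|k|^{-2\varepsilon})\frac{(2|k|)^s}{s!}$. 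This closes the induction and proves the asymptotic at the same time, uniformly in $s\le D$.

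The main obstacle is making the coefficient bound $|c_{r,-r+t}|\le (Cr)^t$ precise and uniform. I expect to handle it by writing $(qj)^r=\exp(r\log(qj))$, where $\log(qj)=744q+\dots$ is analytic in $|q|<\rho$ for some small fixed $\rho>0$ (since $qj$ is nonvanishing near $0$). Cauchy's estimate on $|q|=\rho'$ with $\rho'$ of size $1/r$ then gives $|c_{r,-r+t}|\le \rho'^{-t}e^{O(r\rho')}\ll (C'r)^t$. The second thing to watch is the $t=1$ term, $744(D-s+1)y_{s-1}$, which the bound above already covers. If the derivation of the linear system has an off-by-one in the summation ranges, I would recheck it directly from the displayed formula for $(q^{-1}\Delta)^{-\ell}E_{k'}^{-1}$.
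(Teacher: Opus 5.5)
Your proof is correct, but it takes a different route from the paper at the one step that needs work: bounding the coefficients $c_{D-d,s-D}$ in the triangular system \eqref{eq:faber}. You set up that system exactly as the paper does.

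\textbf{The paper's route.} It imports the Brisebarre--Philibert estimates. Their sharp form $(1728-e^{2\pi})^{r+n}\frac{(-n)^n r^r}{(r+n)^{r+n}}$ holds only when the gap $t=r+n$ satisfies $t\le (1-c)r$. So the paper must split into the ranges $d<D(1-c)$ and $d>D(1-c)$. In the second range it falls back on the lossy bound $c_{r,n}\le e^{2\pi n}1728^r$. It then controls the resulting Stirling-type sums through monotonicity in $r$ and an auxiliary cut-off parameter.

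\textbf{Your route.} Majorizing $qj$ coefficientwise by $(1-Cq)^{-1}$ gives
\[
c_{r,-r+t}\le\binom{r+t-1}{t}C^t\le(2eCr)^t
\]
uniformly in every range. This buys three things:
\begin{itemize}
\item no case split is needed;
\item the correction becomes a geometric series in the gap with ratio $O(D^2/|k|)$, which makes the source of the exponent $-2\varepsilon$ transparent;
\item your crude inductive hypothesis $|y_d|\le 2(2|k|)^d/d!$ keeps the implied constants from compounding over the $D$ steps of the induction. The paper's hypothesis $y_r=\frac{(2k)^r}{r!}(1+O(k^{-2\varepsilon}))$ leaves this point implicit.
\end{itemize}

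The majorization already settles what you call the main obstacle. It only needs nonnegativity and subexponential growth of the coefficients of $qj$, so the Cauchy-estimate detour is unnecessary.

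\textbf{Why the paper's route exists.} Its advantage is not needed for this proposition. The Brisebarre--Philibert bounds are sharper when the gap is large: for example $c_{r,0}\le 1728^r$, versus roughly $(4C)^r$ from your majorant. The paper sets them up here because the same machinery, through $\BP$, Lemma \ref{l:BP} and Proposition \ref{p:faber-coefficients-general}, is reused later for $D$ as large as $k$.
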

\begin{proof}
Again, we assume $k>0$, the other case being similar.
We use induction on $d$; the base case is immediate because $y_0=1$. 

Suppose $y_r=\frac{(2k)^r}{r!}\left(1+O(k^{-2
  \varepsilon})\right)$ holds for all $0\leq r\leq d-1$. Since
\begin{equation}\label{eq:faber}\sum_{r=0}^d y_r
c_{D-r,-(D-d)}=\mathcal{D}_{\ell,d}=\frac{(2k)^d}{d!}(1+O(1/k^{\frac{1}{2}+\varepsilon})),
\end{equation}
it suffices to check that 
\[\frac{y_d-\mathcal{D}_{\ell,d}}{(2k)^d/d!}=\frac{d!}{(2k)^d}\sum_{r=0}^{d-1}y_rc_{D-r,-(D-d)}=O\left(\sum_{r=0}^{d-1}\frac{d!}{(2k)^{d-r}r!}c_{D-r,-(D-d)}\right)\]
is, in fact, $O(k^{-2 \varepsilon})$.

When $D-d>c(D-r)$, and hence whenever $d<D(1-c)$, we know that 
\[c_{D-r,-(D-d)}\leq
(1728-e^{2\pi})^{d-r}\frac{(D-r)^{D-r}}{(D-d)^{D-d}(d-r)^{d-r}}.\]
Otherwise, the bound $c_{D-r,-(D-d)}\leq 1728^{D-r}e^{-2\pi(D-d)}$ still applies. 

Thus, for $d<D(1-c)$, we have
\begin{align*}
\sum_{r=0}^{d-1}\frac{d!}{(2k)^{d-r}r!}c_{D-r,-(D-d)}&\leq
                                                       \sum_{r=0}^{d-1}\frac{d!}{(2k)^{d-r}r!}(1728-e^{2\pi})^{d-r}\frac{(D-r)^{D-r}}{(D-d)^{D-d}(d-r)^{d-r}}
\end{align*}
When $r=0$, the term above is
\begin{align*}
\frac{d!}{(2k)^d}\frac{(1728-e^{2\pi})^dD^D}{(D-d)^{D-d}d^d} &=O
\left(\sqrt{d}\left(\frac{d(1728-e^{2\pi})}{ke}\right)^d
\left(1+\frac{d}{D-d}\right)^{D-d}\right) \notag \\
&=O\left(\left(\frac{dC}{k}\right)^d\right)=O(k^{-2 \varepsilon})
\end{align*}
for any $C>1728-e^{2\pi}$.

If $r\geq 1$, Stirling's bound gives
\begin{align*}
& \frac{d!}{(2k)^{d-r}r!}(1728-e^{2\pi})^{d-r}\frac{(D-r)^{D-r}}{(D-d)^{D-d}(d-r)^{d-r}} \notag \\
&=O\left(\frac{d^d}{k^{d-r}r^r}(1728-e^{2\pi})^{d-r}\frac{(D-r)^{D-r}}{(D-d)^{D-d}(d-r)^{d-r}}\right),
\end{align*}
as $\sqrt{d/r}\leq 2^{d-r}$.
Note that the derivative with respect to $r$ of log of the big‑O term is
\[ \log(k)-\log(1728-e^{2\pi})-\log(D-r)+\log(d-r),\] which is always
positive if $D<k/(1728-e^{2\pi})$.

Fix $t>0$ such that $\left(\frac{1}{2}-\varepsilon \right)(2t+1)<t-2 \varepsilon$. Then $\frac{|y_d-\mathcal{D}_{\ell,d}|}{(2k)^d/d!}$ is
\begin{align*}
& O\left(\frac{(d-t)d^d}{k^{t}(d-t)^{d-t}}(1728-e^{2\pi})^{t}\frac{(D-d+t)^{D-d+t}}{(D-d)^{D-d}t^t}+\frac{td^d(1728-e^{2\pi})}{k(d-1)^{d-1}}\frac{(D-(d-1))^{D-(d-1)}}{(D-d)^{D-d}}\right)\\
  &=O\left( \frac{d(d(D-d+t))^tC_1^t}{k^tt^t} + \frac{td(D-d+t)C_2}{k}\right)=O\left( \frac{D^{2t+1}C_1^t}{k^t} + \frac{tD^2C_2}{k}\right)=O(k^{-2 \varepsilon}),
\end{align*}
for some constants $C_1$ and $C_2$.

If $d>D(1-c)$, we have to break up the sum into pieces based on $D-r$. 
\begin{align*}
  \frac{|y_d-\mathcal{D}_{\ell,r}|}{(2k)^d/d!}&=O\left(
                        \sum_{r<D-c^{-1}(D-d)}\frac{d!}{(2k)^{d-r}r!}1728^{D-r}e^{-2\pi(D-d)}\right)\\
  &\
    +O\left(\sum_{r>D-c^{-1}(D-d)}^{d-1}\frac{d!}{(2k)^{d-r}r!}(1728-e^{2\pi})^{d-r}\frac{(D-r)^{D-r}}{(D-d)^{D-d}(d-r)^{d-r}}\right)\\
  &=O\left(
                        \sum_{r<D-c^{-1}(D-d)}\frac{d^d}{k^{d-r}r^r}1728^{D-r}e^{-2\pi(D-d)}\right)+O(k^{-2
    \varepsilon})
\end{align*}
As in the previous situation, we see that the term in the sum
increases with $r$ as long as $D<k/1728$.

When $d\leq D-1$, in the above sum we take $r=D-c^{-1}(D-d)$ and the sum is
\[=O\left(\frac{Dd^d}{(D-c^{-1}(D-d))^{D-c^{-1}(D-d)}}\left(\frac{1728^{1/c}}{e^{2\pi}k^{\frac{1-c}{c}}}\right)^{D-d}\right).\]
The derivative with respect to $d$ of log of the above term is
\[\log d-c^{-1}\log (D-c^{-1}(D-d))+\frac{1-c}{c}\log k-O(1),\]
which is positive whenever $D=o(k)$. Therefore, we may take
$d=D-1$ and the sum is
\[=O\left(\frac{D(D-1)^{D-1}}{(D-c^{-1})^{D-c^{-1}}}\left(\frac{1728^{1/c}}{e^{2\pi}k^{\frac{1-c}{c}}}\right)\right)=O(k^{-1/2}),\]
as $D=O(k^{1/2-\varepsilon})$.

Finally, when $d=D$, $r\leq D-1$ and the terms in the sum are
maximized when $r=D-1$. Therefore, the sum is
\[O\left(\frac{(D-1)D^D}{k(D-1)^{D-1}}1728e^{-2\pi}\right)=O\left(\frac{D^2}{k}\right)=O(k^{-2 \varepsilon}).\]
\end{proof}

We now apply the above approximation to Faber polynomials to the
study of roots of $g_{k,m}$.
\begin{corollary}
Suppose $D=O(k^{\frac{1}{2}-\varepsilon})$ for some $\varepsilon>0$. Then $g_{k,m}$ has no non-elliptic roots on $\mathcal{A}$ for $k\gg 0$. 
\end{corollary}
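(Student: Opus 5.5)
On the arc $\mathcal{A}$ we have $j(e^{i\theta})\in[0,1728]$. The identity $g_{k,m}=\Delta^\ell E_{k'}P(j)$ reduces the question to the Faber polynomial: a non-elliptic zero of $g_{k,m}$ on $\mathcal{A}$ is a root of $P$ in the open interval $(0,1728)$. Indeed, $\Delta$ does not vanish on $\mathfrak{H}$, and $E_{k'}$ vanishes only at $i$ or $\rho$. So it suffices to show that $P(x)\neq 0$ for every $x\in[0,1728]$ once $k\gg 0$.

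Write $\lambda=2k$ and compare $P$ with the truncated exponential
\[
Q(x)=\sum_{d=0}^D \frac{\lambda^d}{d!}x^{D-d}=x^D\,e_D(\lambda/x),\qquad e_D(t)=\sum_{d=0}^D \frac{t^d}{d!}.
\]
By Proposition \ref{p:faber-coefficients}, each $y_d=\frac{\lambda^d}{d!}(1+\eta_d)$ with $|\eta_d|\le Ck^{-2\varepsilon}$, uniformly in $0\le d\le D$. All coefficients $y_d$ are therefore positive for $k\gg 0$. For real $x\ge 0$ this gives
\[
P(x)=\sum_{d=0}^D y_d x^{D-d}\ \ge\ (1-Ck^{-2\varepsilon})\sum_{d=0}^D \frac{\lambda^d}{d!}x^{D-d}>0,
\]
since every summand is nonnegative and the $d=D$ term $y_D>0$ is strictly positive even at $x=0$. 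So $P$ has no roots in $[0,\infty)\supset[0,1728]$, and no comparison with the Szeg\H{o} asymptotics is needed. The triviality here is precisely because on $\mathcal{A}$ the variable $j$ is a nonnegative real number while all Faber coefficients are asymptotically positive.

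The only genuine point is the uniformity of the error in Proposition \ref{p:faber-coefficients}: we need a single constant $C$ with $|\eta_d|<1$ for all $d\le D$ simultaneously once $k$ is large. I would check this by rereading the induction there. Each $O(k^{-2\varepsilon})$ bound is obtained from estimates depending only on $D=O(k^{1/2-\varepsilon})$, not on $d$. The inductive step adds a lower-order error, of size $O(\sum_r (d!/r!)(2k)^{r-d}c_{D-r,-(D-d)})$ with $y_r$ bounded by $2\lambda^r/r!$, and this error does not compound multiplicatively. Hence the implied constants are uniform, and for $k\gg 0$ every $y_d>0$. This uniformity check is the main (mild) obstacle; the rest is the positivity observation above.
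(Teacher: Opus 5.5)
Your argument is correct and is essentially the paper's proof: Proposition \ref{p:faber-coefficients} makes every Faber coefficient $y_d$ positive for $k\gg 0$, so $P$ has no roots in $[0,\infty)\supset j(\mathcal{A})=[0,1728]$, and since $\Delta$ never vanishes and $E_{k'}$ vanishes only at $i,\rho$, there are no non-elliptic zeros on $\mathcal{A}$. Your remark that the $O(k^{-2\varepsilon})$ error must be uniform in $d\le D$ is worth making, since the paper leaves it implicit; it holds because the error in the induction is additive, so it suffices to keep $|\eta_r|\le 1$ for all $r<d$.
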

\begin{proof}
By Proposition \ref{p:faber-coefficients}, the Faber polynomial
of $g_{k,m}$ is
\[\sum_{r=0}^D \frac{(2k)^r}{r!}\left(1+O(k^{-2
  \varepsilon})\right)x^{D-r}.\]
In particular, for $k\gg 0$, all the coefficients are positive and the polynomial cannot have nonnegative roots. Since $j(\mathcal{A})=[0,1728]$, the form $g_{k,m}$ cannot have non-elliptic roots on $\mathcal{A}$. 
\end{proof}

Finally, when $D$ is much smaller than $k$, we can prove Theorem~\ref{tl:conj-tinyD} on the location of the zeros of $g_{k,m}$. To this end, we will use the following stability result for the preimage of the Szeg\H{o} curve under the $j$-invariant.

\begin{proposition}\label{p:j-szego}
For a real number $\delta>0$, let $\mathcal{S}_\delta$ be the curve of points $\tau\in \mathcal{F}$ such that $\frac{24}{(1-\delta)j(\tau)}\in \mathcal{S}$. Then $\mathcal{S}_\delta-\frac{1}{2\pi}\log|1-\delta|$ approaches $\mathcal{L}_\pm$ as $\delta\to 1^\mp$. 
\end{proposition}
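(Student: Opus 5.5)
We would prove this by passing to the $q$-coordinate at the cusp, where the vertical shift $-\frac{1}{2\pi}\log|1-\delta|$ becomes the rescaling $q\mapsto |1-\delta|\,q$. Write $\varepsilon=|1-\delta|$ and $\tau=\tau'+\frac{i}{2\pi}\log(1/\varepsilon)$, i.e.\ $\tau'=\tau-\frac{1}{2\pi}\log(1/\varepsilon)\,i$, which is the shift in the statement. Then $q(\tau)=\varepsilon\, q(\tau')$ with $q(\tau')=e^{2\pi i\tau'}$. The first step is to observe that every point of $\mathcal{S}_\delta$ lies high in the cusp when $\varepsilon$ is small. Indeed, $\mathcal{S}$ is a bounded curve contained in $|w|\le 1$ and bounded away from $0$ (its closest points to the origin are $\pm e^{-1}i$ and $-W(e^{-1})$). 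Hence $\frac{24}{(1-\delta)j(\tau)}\in\mathcal{S}$ forces $|j(\tau)|\asymp \varepsilon^{-1}\to\infty$, so $\Im\tau\to\infty$ uniformly on $\mathcal{S}_\delta$. Equivalently, $|q(\tau')|$ stays in a fixed compact annulus $\{a\le |q'|\le b\}$ with $a,b>0$ independent of $\delta$.

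In this regime we use the expansion $j(\tau)=q(\tau)^{-1}+744+O(|q(\tau)|)$, which gives
\[
\frac{24}{(1-\delta)j(\tau)}=\frac{24\,\varepsilon q'}{(1-\delta)\bigl(1+744\,\varepsilon q'+O(\varepsilon^2)\bigr)}=\pm 24\,q'\,\bigl(1+\eta(\tau')\bigr),
\]
where $\pm=\operatorname{sign}(1-\delta)$ and $\eta(\tau')=O(\varepsilon)$ uniformly on the annulus. Thus $\mathcal{S}_\delta-\frac{1}{2\pi}\log|1-\delta|$ is exactly the set of $\tau'$ with $|\Re\tau'|\le\frac12$ and $\pm24 e^{2\pi i\tau'}(1+\eta)\in\mathcal{S}$. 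Here the $\mathcal{F}$-condition reduces to the strip condition, since $\Im\tau$ is large. The limiting set, obtained by setting $\eta=0$, is by definition $\mathcal{L}_\pm$. The sign convention also matches the statement: $\delta\to1^-$ gives $1-\delta>0$ and hence $\mathcal{L}_+$.

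It remains to show that the perturbed curve converges to $\mathcal{L}_\pm$, say in the Hausdorff sense, or more strongly as graphs over $x=\Re\tau'$. Let $F(w)=\log|w|+1-\Re w$, so that $\mathcal{S}$ is contained in $\{F=0\}$. We have $F(w(1+\eta))=F(w)+O(|\eta|)$ uniformly on the annulus. Writing $w=\pm24e^{2\pi i(x+iy)}=re^{i\phi}$, for fixed $x$ the radial derivative is $\partial_r F=\frac1r-\cos\phi$. This is nonzero except where $r\cos\phi=1$, which on $\mathcal{S}$ occurs only at the point $w=1$. Away from that point, the implicit function theorem shows that for each $x$ the perturbed equation has a unique solution $y=g_\pm(x)+O(\varepsilon)$ on the branch selected by the bijection $u$. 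The same monotonicity also excludes spurious solutions on the other branch of $\{F=0\}$ inside the annulus, because the $u$-parametrization picks out the relevant branch $|w|\le1$.

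The main obstacle is the corner of $\mathcal{S}$ at $w=1$. This corresponds to $x=0$ for $\mathcal{L}_+$ and to $x=\pm\frac12$ for $\mathcal{L}_-$, and there $\partial_rF=0$, so the implicit function theorem does not apply directly. Near this point we would argue by hand. Set $w=1+z$, so that $F=-\tfrac12\Re(z^2)+O(|z|^3)$ and the zero set is locally the pair of lines $\Re(z^2)=0$ crossing at $1$. Since $F(w(1+\eta))=F(w)+O(\varepsilon)$, the perturbed zero set lies in an $O(\sqrt{\varepsilon})$-neighbourhood of this cross. Intersecting with the branch cut out by $|w|\le1$ (equivalently $u^{-1}$) then shows that the perturbed curve lies within $O(\sqrt{\varepsilon})$ of $\mathcal{L}_\pm$ near the corner. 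Combined with the $O(\varepsilon)$ estimate elsewhere, this gives convergence of $\mathcal{S}_\delta-\frac{1}{2\pi}\log|1-\delta|$ to $\mathcal{L}_\pm$ as $\delta\to1^\mp$.
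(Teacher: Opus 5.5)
Your proof is correct and is the same argument as the paper's. The paper's entire proof is the remark that high in the cusp $\Im\tau=\frac{\log|j(\tau)|}{2\pi}+o(1)$ (i.e.\ $j\approx q^{-1}$), which is exactly your rescaling $q(\tau)=\varepsilon q(\tau')$. Your shift $\tau'=\tau+\frac{i}{2\pi}\log\varepsilon$ is the right one, even though it is not literally the statement's: as printed, the statement has the sign of $\frac{1}{2\pi}\log|1-\delta|$ reversed relative to $\mathcal{S}_\delta=\mathcal{L}_\pm-\frac{1}{2\pi}\log|1-\delta|$ in Theorem~B, so you have in effect corrected a sign typo.

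Your implicit-function and corner analysis can be bypassed, because $\mathcal{S}$ itself is fixed. Inverting $1/j=q-744q^2+\cdots$ gives $q'=\pm\frac{w}{24}\bigl(1+O(\varepsilon)\bigr)$ uniformly for $w\in\mathcal{S}$. So in the $q'$-coordinate the shifted $\mathcal{S}_\delta$ is the image of $\mathcal{S}$ under a map uniformly $O(\varepsilon)$-close to $w\mapsto\pm w/24$, whose image is $\mathcal{L}_\pm$. This gives Hausdorff distance $O(\varepsilon)$ everywhere, including near $w=1$, where you only obtained $O(\sqrt{\varepsilon})$.
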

\begin{proof}
Note that $|j(z)|\gg 0$ then $\Im z=\frac{\log |j(z)|}{2\pi}+o(1)$ and the conclusion follows.
\end{proof}

\begin{theorem}\label{t:conj-tinyD}
Suppose $D\to\infty$ such that $D<\frac{\alpha \log |k|}{\log\log |k|}$ for some $\alpha\in (0,1)$.
Then the zeros of $g_{k,m}$ asymptotically approach
$\mathcal{S}_\delta=\mathcal{L}_\pm -\frac{1}{2\pi}\log|1-\delta|$,
where $\delta=\frac{m}{\ell}$ and $\pm$ is the sign of $k$.

\begin{figure}[H]
     \centering
     \begin{subfigure}[b]{0.45\textwidth}
         \centering
         \includegraphics[width=\textwidth]{inverse-szego.png}
         \subcaption{$\mathcal{S}_\delta$ when $\delta=0.98$}
         \label{fig:positive_delta}
     \end{subfigure}
     \hfill
     \begin{subfigure}[b]{0.45\textwidth}
         \centering
         \includegraphics[width=\textwidth]{inverse-szego-negative.png}
         \subcaption{$\mathcal{S}_\delta$ when $\delta=1.02$}
         \label{fig:negative_delta}
     \end{subfigure}
     \label{fig:szego_comparison}
     \caption{Asymptotic location of zeros as $\delta\to 1$}
\end{figure}

\end{theorem}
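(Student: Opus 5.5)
The plan is to reduce the statement to Szegő's theorem on the zeros of truncated exponentials, using Proposition \ref{p:faber-coefficients} to identify the Faber polynomial. Since $D<\alpha\log|k|/\log\log|k|$, certainly $D=O(|k|^{1/2-\varepsilon})$. Proposition \ref{p:faber-coefficients} then gives
\[ P(x)=\sum_{d=0}^D \frac{(2|k|)^d}{d!}\bigl(1+\eta_d\bigr)x^{D-d},\qquad |\eta_d|\le C|k|^{-2\varepsilon}. \]
The zeros of $g_{k,m}$ in $\mathcal{F}$, other than the forced elliptic zeros coming from $E_{k'}$, are exactly the $\tau$ with $j(\tau)$ a root of $P$.

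Next, substitute $x=2|k|/(Dw)$ and multiply by $(Dw/2|k|)^D$. This turns $P$ into
\[ Q(w)=\sum_{d=0}^D \frac{(Dw)^{d}}{d!}(1+\eta_d)\cdot(\text{reindexed}), \]
that is, up to a nonzero constant, the truncated exponential $s_D(Dw)=\sum_{n\le D}(Dw)^n/n!$ with each coefficient perturbed by a factor $1+\eta$. By Szegő's theorem, the zeros of $s_D(Dw)$ approach $\mathcal{S}$ as $D\to\infty$. Moreover, on compact sets away from $\mathcal{S}$, the quantity $|s_D(Dw)|^{1/D}$ converges locally uniformly to $\max(|e^w|,|we^{1-w}|\cdot|e^{w}|/|w|\ldots)$. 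Concretely, I would use the standard representation $s_D(Dw)=e^{Dw}-\frac{(Dw e^{1-w})^{D}e^{Dw}}{\sqrt{2\pi D}}\cdot\frac{w}{1-w}(1+o(1))$ outside the disk, and the analogous one inside.

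The perturbation argument is the main obstacle, since $D$ grows. I would compare $Q$ with $s_D(Dw)$ directly. The difference is bounded by $C|k|^{-2\varepsilon}\sum_{n\le D}|Dw|^n/n!\le C|k|^{-2\varepsilon}e^{D|w|}$, while near any point at distance $\eta$ from $\mathcal{S}$ the function $|s_D(Dw)|$ is at least $c_\eta^D\cdot(\text{poly in }D)^{-1}$ times the dominant scale. Rouché's theorem on small circles, or equivalently Hurwitz applied to $Q(w)^{1/D}$-normalized functions, then localizes all zeros of $Q$ within $o(1)$ of $\mathcal{S}$. For this we need $|k|^{-2\varepsilon}\cdot A^D\to 0$ for the relevant exponential ratio $A$, together with polynomial factors like $D^D$ that can arise from $d!$ vs.\ $D^d$ comparisons. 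The hypothesis $D\log D<\alpha\log|k|$ is exactly what makes $D^D\le |k|^\alpha$. So I would take $\varepsilon$ close to $1/2$ and require $2\varepsilon>\alpha$. To do this I would re-run the proof of Proposition \ref{p:faber-coefficients} in this much smaller range, where the error is really $O(D^{2}/|k|)$, and check that the terms I need are controlled. This bookkeeping, namely making sure the relative errors $\eta_d$ are small compared with the worst-case cancellation in $s_D(Dw)$ near $\mathcal{S}$, is where I expect the difficulty.

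Finally, translate back. A root $x$ of $P$ corresponds to $w=2|k|/(Dx)$ near $\mathcal{S}$. With $k=12\ell+k'$ and $D=\ell-m=\ell(1-\delta)$, we get $2|k|/D=24/|1-\delta|+o(1)$. Since $k>0$ forces $\delta<1$ (and the case $k<0$, $\delta>1$ introduces the sign), we obtain $\frac{24}{(1-\delta)j(\tau)}\in\mathcal{S}$ up to $o(1)$, so $\tau$ is near the curve of Proposition \ref{p:j-szego}. Because $|1-\delta|=D/\ell\to0$, these $j(\tau)$ are huge. Hence $\Im\tau=\frac{\log|j(\tau)|}{2\pi}+o(1)$ and $e^{-2\pi i\tau}\approx j(\tau)$. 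Proposition \ref{p:j-szego} then identifies the limiting locus, after the shift $-\frac{1}{2\pi}\log|1-\delta|$, with $\mathcal{L}_\pm$, which is the statement.
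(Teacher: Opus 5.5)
Your proposal is correct and has the same skeleton as the paper's proof (Proposition \ref{p:faber-coefficients}, then comparison with the truncated exponential, then Szeg\H{o}'s theorem, then Proposition \ref{p:j-szego}). The perturbation step, however, is genuinely different.

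The paper treats that step as a black box. It applies Ostrowski's root-perturbation theorem to $E_D(x)=\sum_d x^{D-d}/d!$ and $F(2kx)/(2k)^D$ with $\Gamma\le 2$. This gives a root-by-root matching within $O(D|k|^{-\alpha/D})$, and the hypothesis on $D$ is used only to make that bound small.

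You instead bound $|Q(w)-s_D(Dw)|\le\max_d|\eta_d|\,s_D(D|w|)$ and compare it with a lower bound for $|s_D(Dw)|$ coming from the uniform asymptotics of the truncated exponential off $\mathcal{S}$. What this costs:
\begin{itemize}
\item You need more input: the asymptotics must be uniform on the whole complement of an $\eta$-neighbourhood of $\mathcal{S}$, including near $0$ and $\infty$. This is routine.
\item On its own it only shows that no zero lies at distance $\ge\eta$ from $\mathcal{S}$. For that Rouch\'e is unnecessary, since the strict inequality gives non-vanishing directly. Ostrowski's matching, combined with Szeg\H{o}, also gives for free that the zeros fill out the whole curve.
\end{itemize}
What it buys: your comparison takes place in the scale-invariant variable $w$ and is far more efficient. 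Off the $\eta$-neighbourhood one has $s_D(D|w|)/|s_D(Dw)|\le A^D\,\mathrm{poly}(D)$ for an absolute $A$. So you only need $|k|^{-2\varepsilon}A^D\to0$, which holds even for $D\le c\log|k|$ with a suitable absolute $c>0$; no $D^D$ factors arise. Ostrowski's bound, by contrast, is absolute in the $x$-variable, where the roots have size $\asymp 1/D$. Transferring it to $w=1/(Dx)$ therefore costs a factor $D$: one really needs $D^2|k|^{-\alpha/D}\to 0$, which is more than the paper's displayed check $D|k|^{-\alpha/D}\to0$. Your route avoids this loss.

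Three slips to fix:
\begin{itemize}
\item The expansion should read $s_D(Dw)=e^{Dw}-\frac{(ew)^D}{\sqrt{2\pi D}}\frac{w}{1-w}(1+o(1))$. Your $(Dwe^{1-w})^De^{Dw}$ carries a spurious $D^D$, and that is what suggested the $D^D$ factors you worried about.
\item That expansion is the one valid for $|w|<1$ (away from $w=1$). For $|w|>1$ one has instead $s_D(Dw)\sim\frac{(ew)^D}{\sqrt{2\pi D}}\frac{w}{w-1}$, so your inside/outside labels are reversed.
\item For $k<0$ the Faber coefficients are really $(2k)^d/d!$, carrying the sign of $k$. Substitute $x=2k/(Dw)$ and use $2k/D=24/(1-\delta)+o(1)$. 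This gives the condition $\frac{24}{(1-\delta)j(\tau)}\in\mathcal{S}$ with the correct sign directly, rather than passing through $|1-\delta|$.
\end{itemize}
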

\begin{proof}
Let $E_D(x)=\sum\limits_{k=0}^Da_kx^{D-k}$, where $a_k=\frac{1}{k!}$, in which
case, by Proposition \ref{p:faber-coefficients} applied to $\varepsilon\in (\alpha/2,1/2)$, the Faber polynomial satisfies
\[\frac{F(2kx)}{(2k)^D}=\sum_{k=0}^Db_kx^{D-k},\]
where $b_k=a_k(1+O(k^{-\alpha}))$.
As in \cite[\S 5.1]{rudnick:faber}, we use Ostrowski's theorem
which states that the roots of $E_D(x)$ and $F(2kx)$ are within
$2D \Gamma(\sum |a_k-b_k|\Gamma^{-k})^{1/D}$, where $\Gamma=\max
(a_k^{1/k},b_k^{1/k})$.

In our case, we may take $\Gamma=2$ for $k\gg 0$, in which case
\[\sum |a_k-b_k|\Gamma^{-k}=\sum_{k=0}^D \frac{O(k^{-\alpha})}{k! \Gamma^k}=O(e^{-\Gamma} k^{-\alpha}).\]
Thus, the roots of $F(2k x)$ and $E_D(x)$ are within $O(D
e^{-\Gamma/D}k^{-\alpha/D})=O(Dk^{-\alpha/D})$ of each
other. The hypothesis implies that $D=\frac{f(k)\log k}{\log\log k}$ for some $f(k)<\alpha$ so 
\[\log D-\frac{\alpha \log k}{D}=(1-\frac{\alpha}{f(k)})\log\log k +\log f(k)-\log\log\log k\to -\infty\]
so the error above is $o(1)$.

Finally, by \cite{szego} the roots of $E_D(\frac{1}{Dx})$ asymptotically approach the curve $\mathcal{S}$. However, by the above, the roots of $E_D(\frac{1}{Dx})$ asymptotically approach the roots of $F(\frac{2k}{D x})=F(\frac{24}{(1-\delta)x})$, and the result follows from Proposition \ref{p:j-szego}. 
\end{proof}

\section{Conjectural behavior of zeros of Miller forms} \label{Conject}
The asymptotic between the zeros of the Miller forms $g_{k,m}$
and the logarithmic Szeg\H{o} curve holds experimentally for all
values of $\delta=\frac{m}{\ell}$.

\begin{conjecture}\label{conj}
For each $\delta>0$, the zeros of $g_{k,m}$ asymptotically
approach the curve $\mathcal{C}_\delta$, defined as the upper
hull of the union of $\mathcal{A}$ and $\mathcal{S}_\delta$.
\begin{figure}[H]
     \centering
     \begin{subfigure}[b]{0.3\textwidth}
         \centering
         \includegraphics[width=\textwidth]{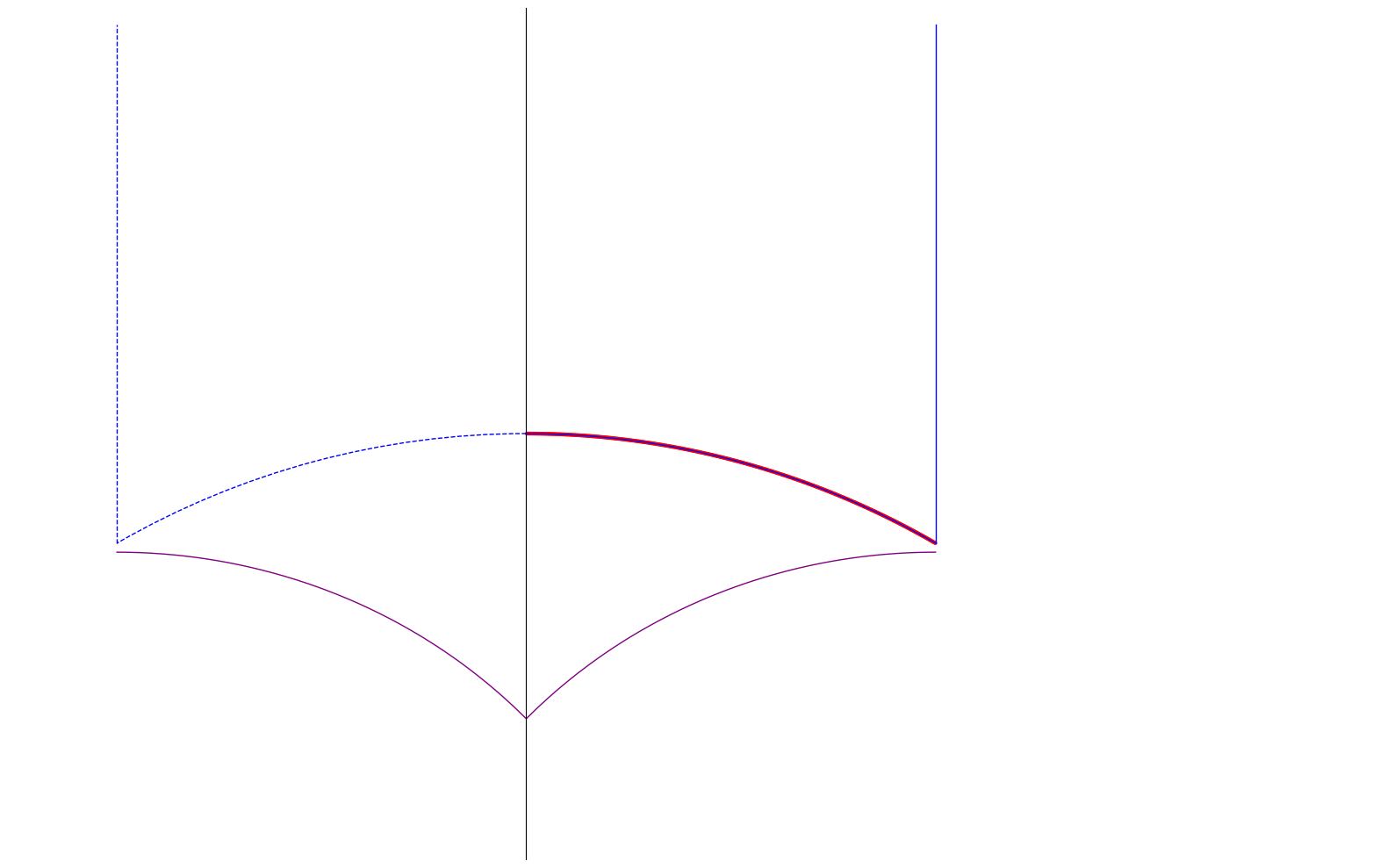}
         \subcaption{$\delta=0.6, k=12000$}
         \end{subfigure}
     \hfill
     \begin{subfigure}[b]{0.3\textwidth}
         \centering
         \includegraphics[width=\textwidth]{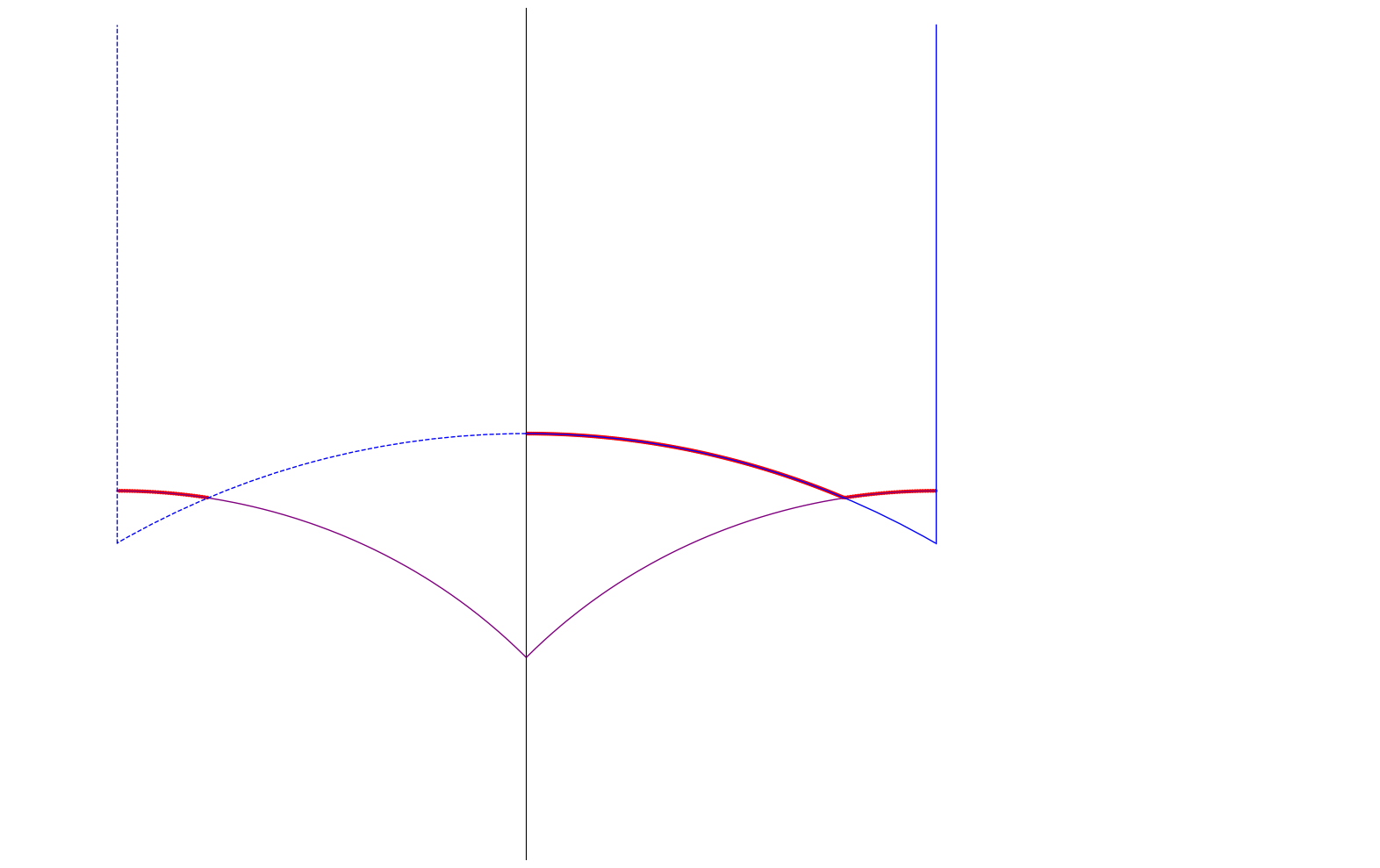}
         \subcaption{$\delta=0.75, k=12000$}
     \end{subfigure}
     \hfill
     \begin{subfigure}[b]{0.3\textwidth}
         \centering
         \includegraphics[width=\textwidth]{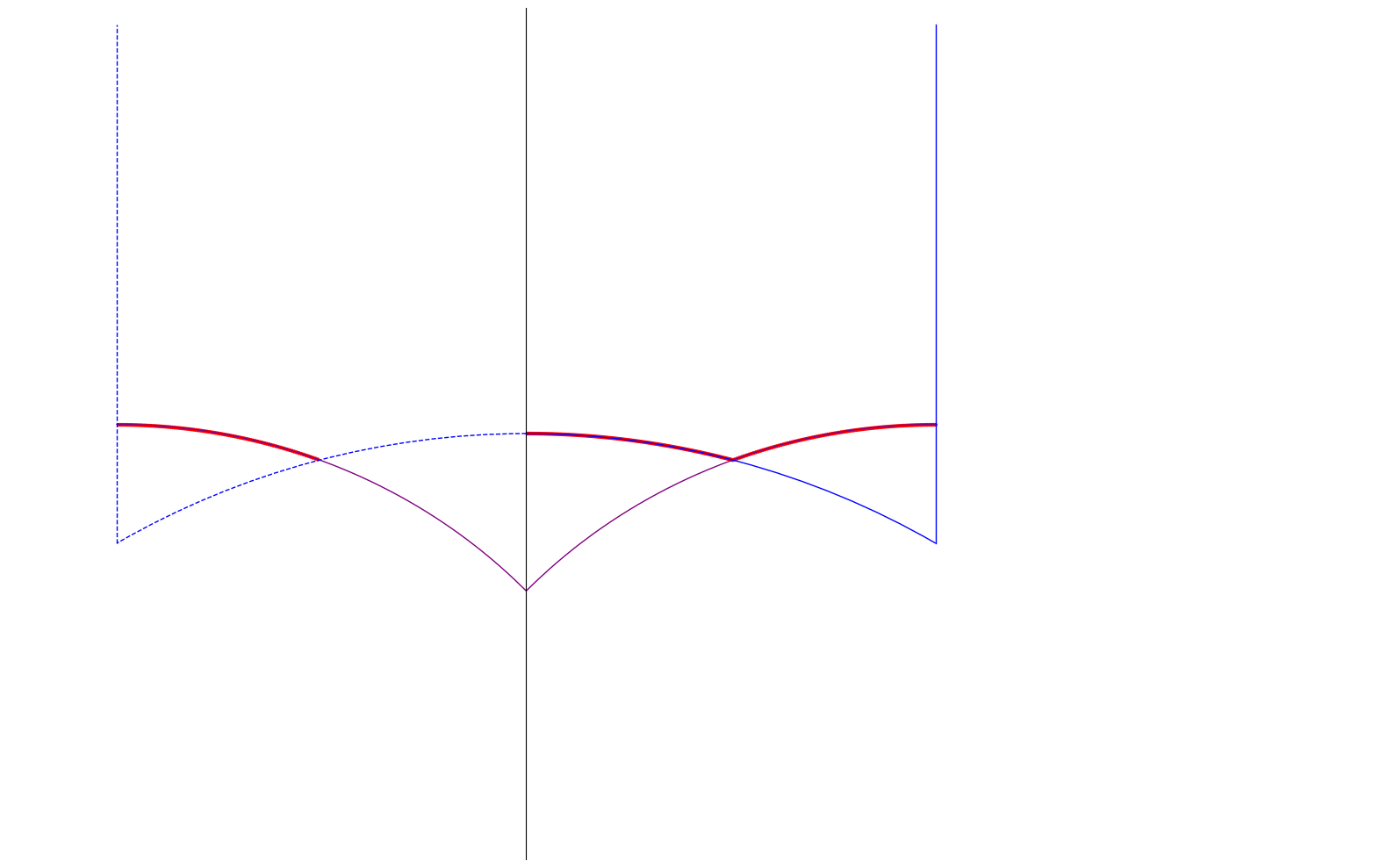}
         \subcaption{$\delta=0.85, k=24000$}
     \end{subfigure}
     \vspace{0.5cm} 

     \begin{subfigure}[b]{0.3\textwidth}
         \centering
         \includegraphics[width=\textwidth]{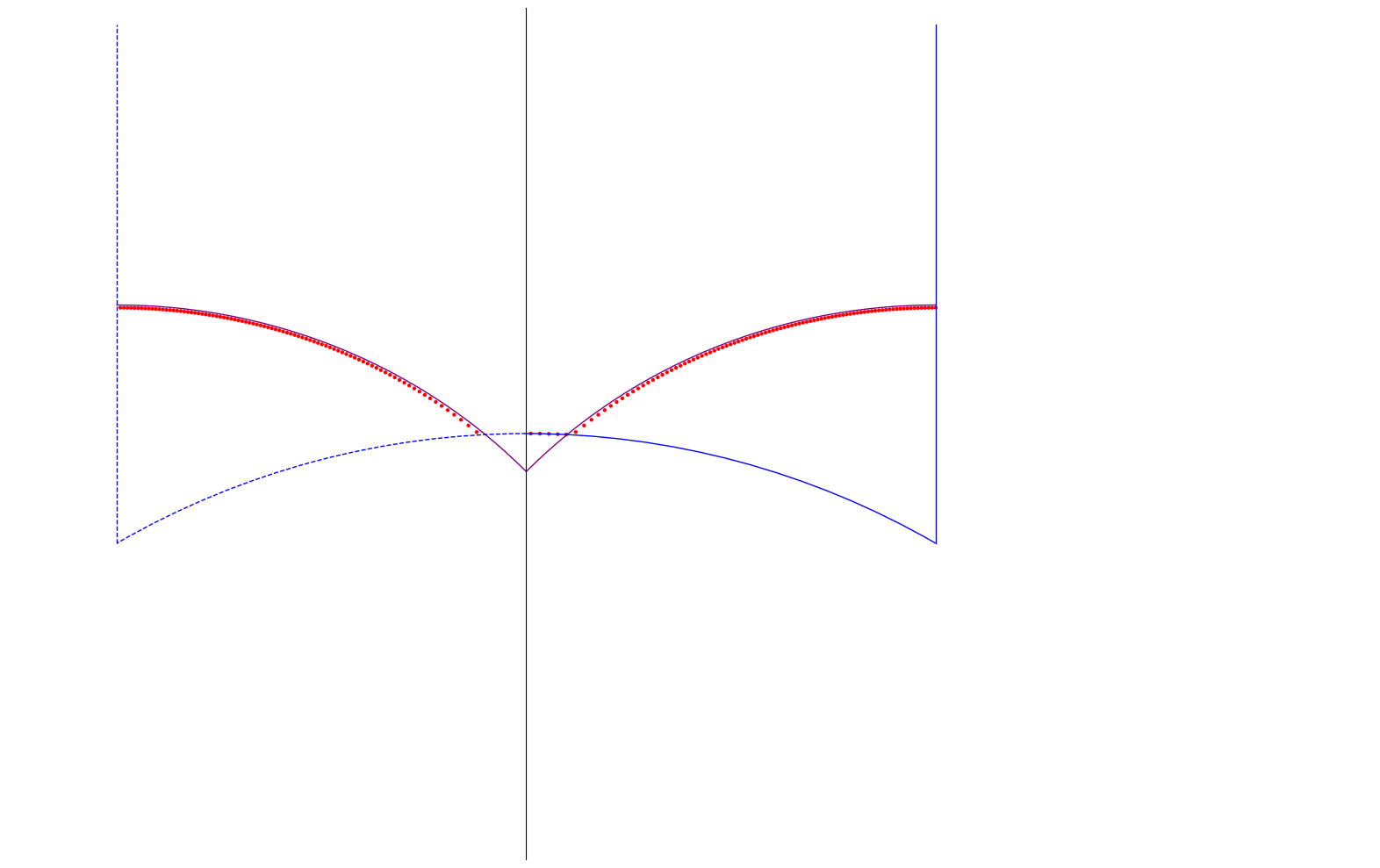}
         \subcaption{$\delta=0.94, k=36000$}
     \end{subfigure}
\hfill
     \begin{subfigure}[b]{0.3\textwidth}
         \centering
         \includegraphics[width=\textwidth]{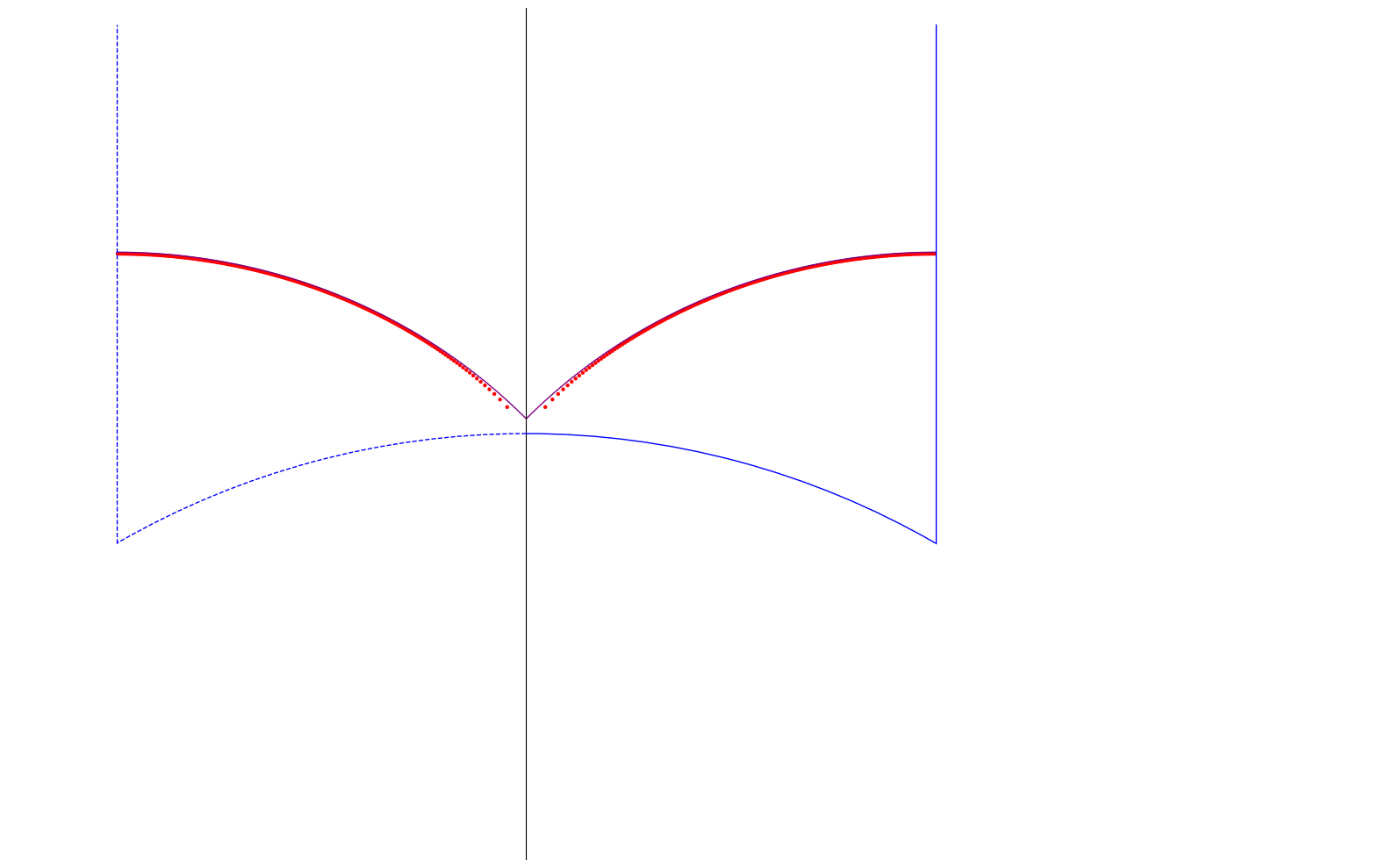}
         \subcaption{$\delta=0.96,k=120000$}
     \end{subfigure}
\hfill
     \begin{subfigure}[b]{0.3\textwidth}
         \centering
         \includegraphics[width=\textwidth]{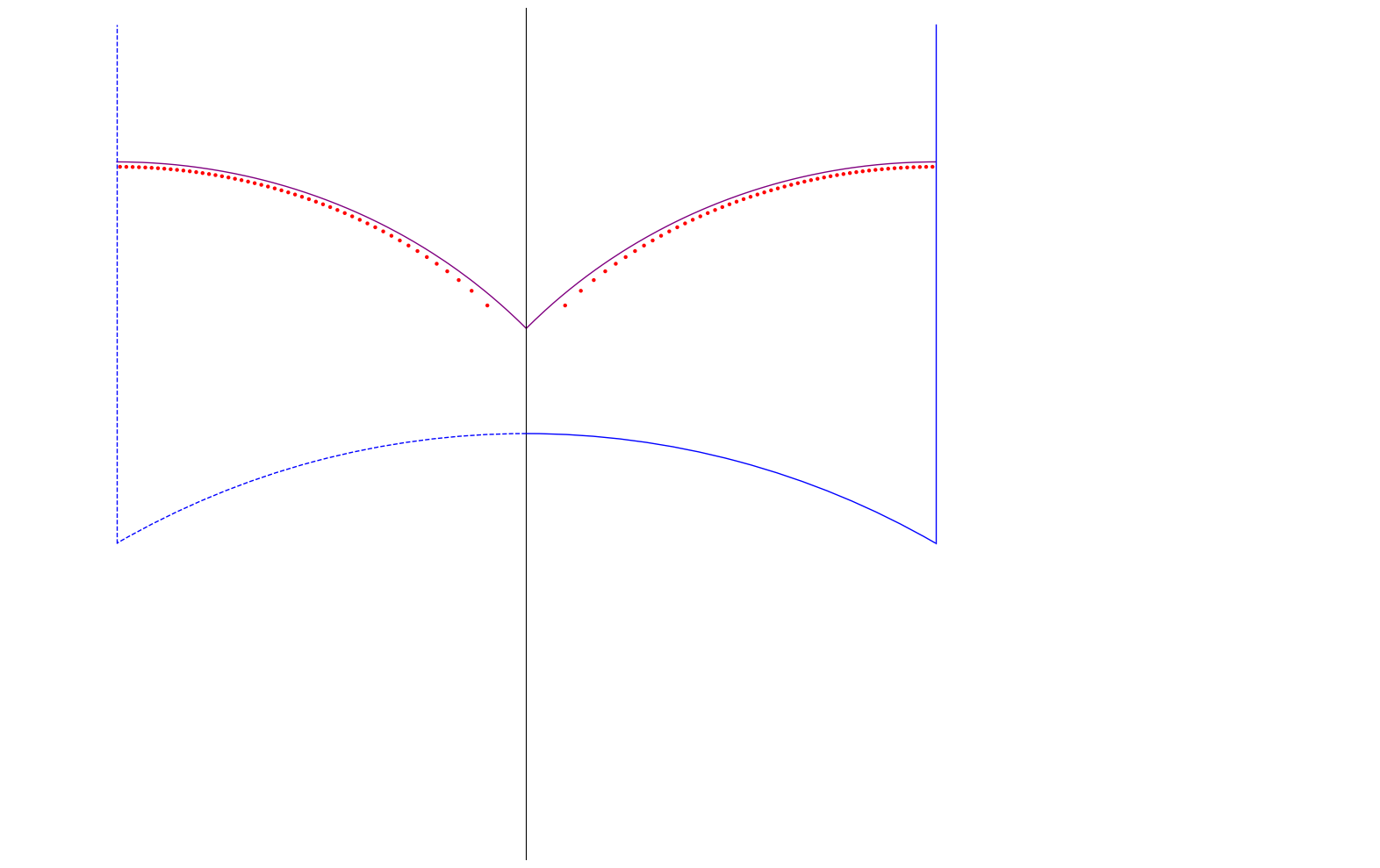}
         \subcaption{$\delta=0.98,k=240000$}
         \end{subfigure}
         \caption{Roots and $\mathcal{S}_\delta$ for several values of $\delta\in (0,1)$}
   \end{figure}
\begin{figure}[H]
     \centering
     \begin{subfigure}[b]{0.3\textwidth}
         \centering
         \includegraphics[width=\textwidth]{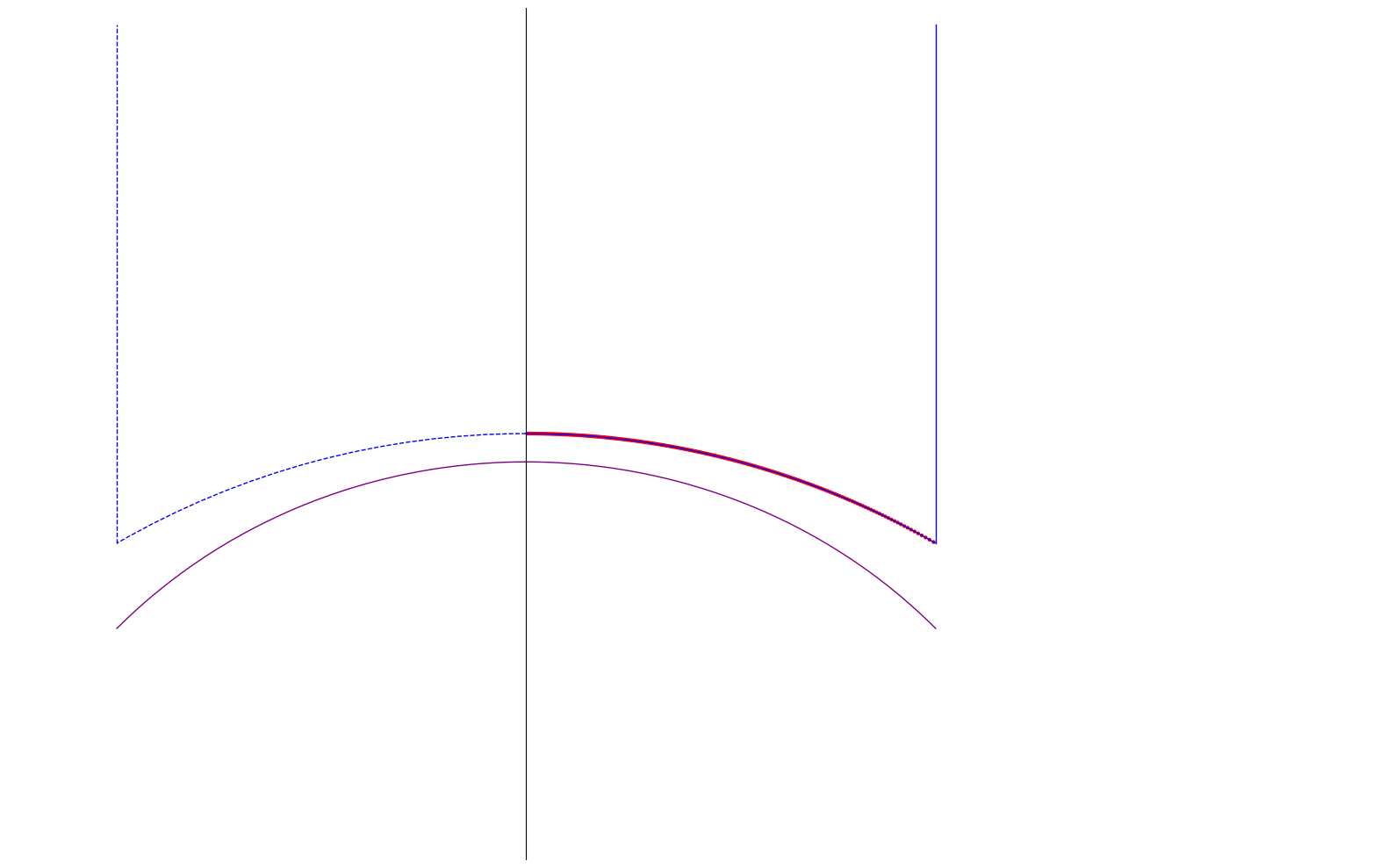}
         \subcaption{$\delta=1.2, k=-12000$}
     \end{subfigure}
     \hfill
     \begin{subfigure}[b]{0.3\textwidth}
         \centering
         \includegraphics[width=\textwidth]{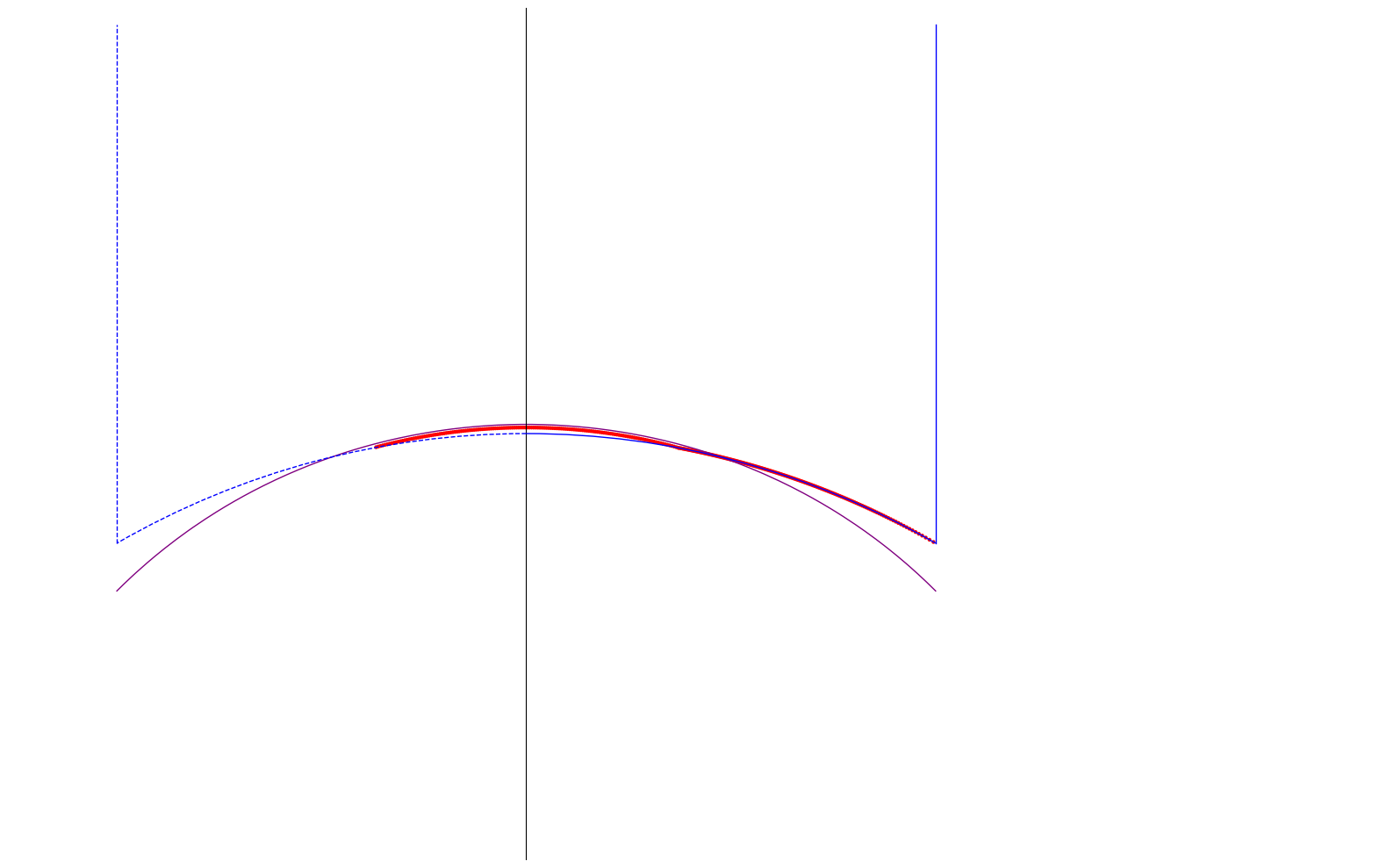}
         \subcaption{$\delta=1.15, k=-24000$}
     \end{subfigure}
     \hfill
     \begin{subfigure}[b]{0.3\textwidth}
         \centering
         \includegraphics[width=\textwidth]{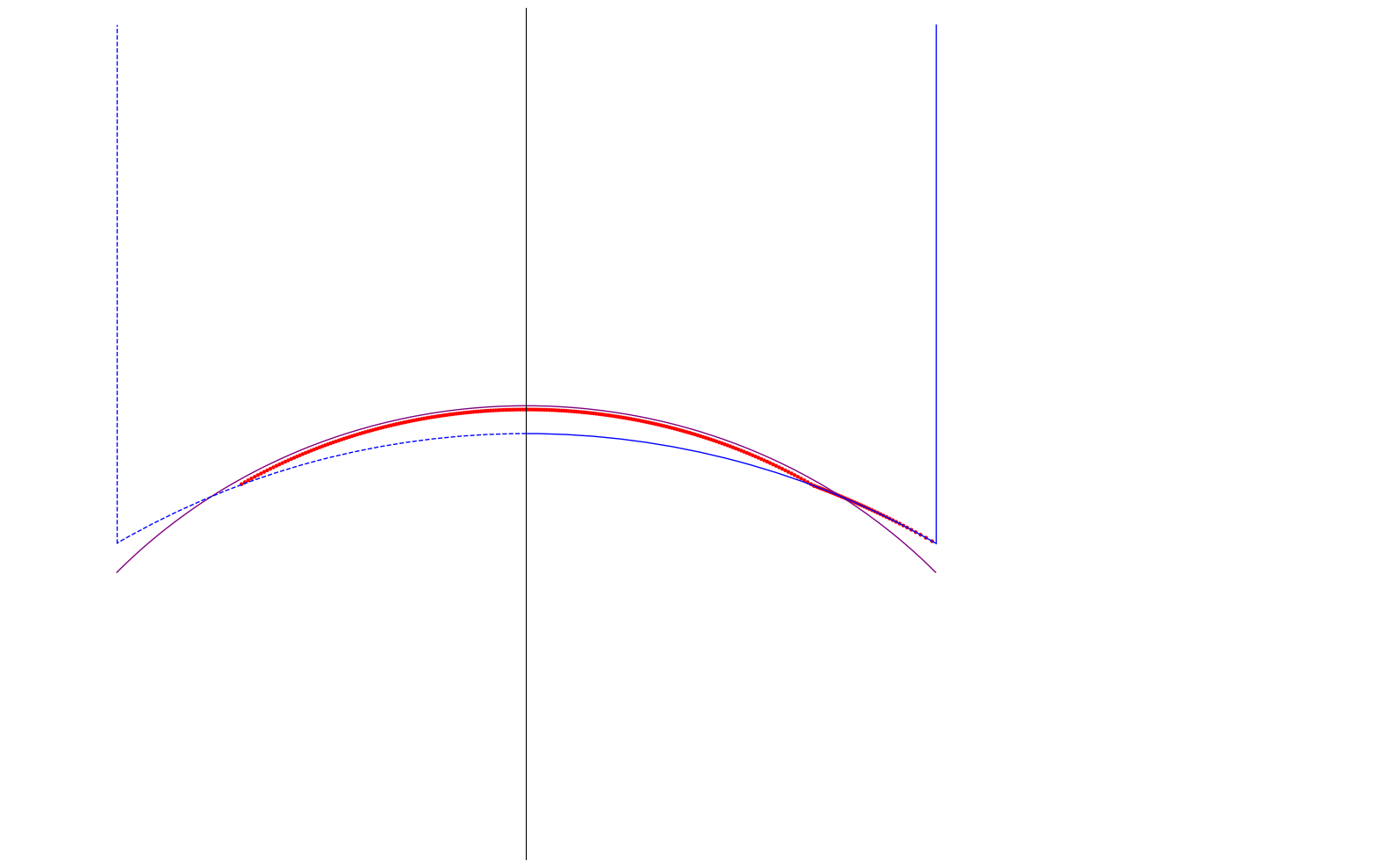}
         \subcaption{$\delta=1.13, k=-24000$}
     \end{subfigure}

     \vspace{0.5cm} 

     \begin{subfigure}[b]{0.3\textwidth}
         \centering
         \includegraphics[width=\textwidth]{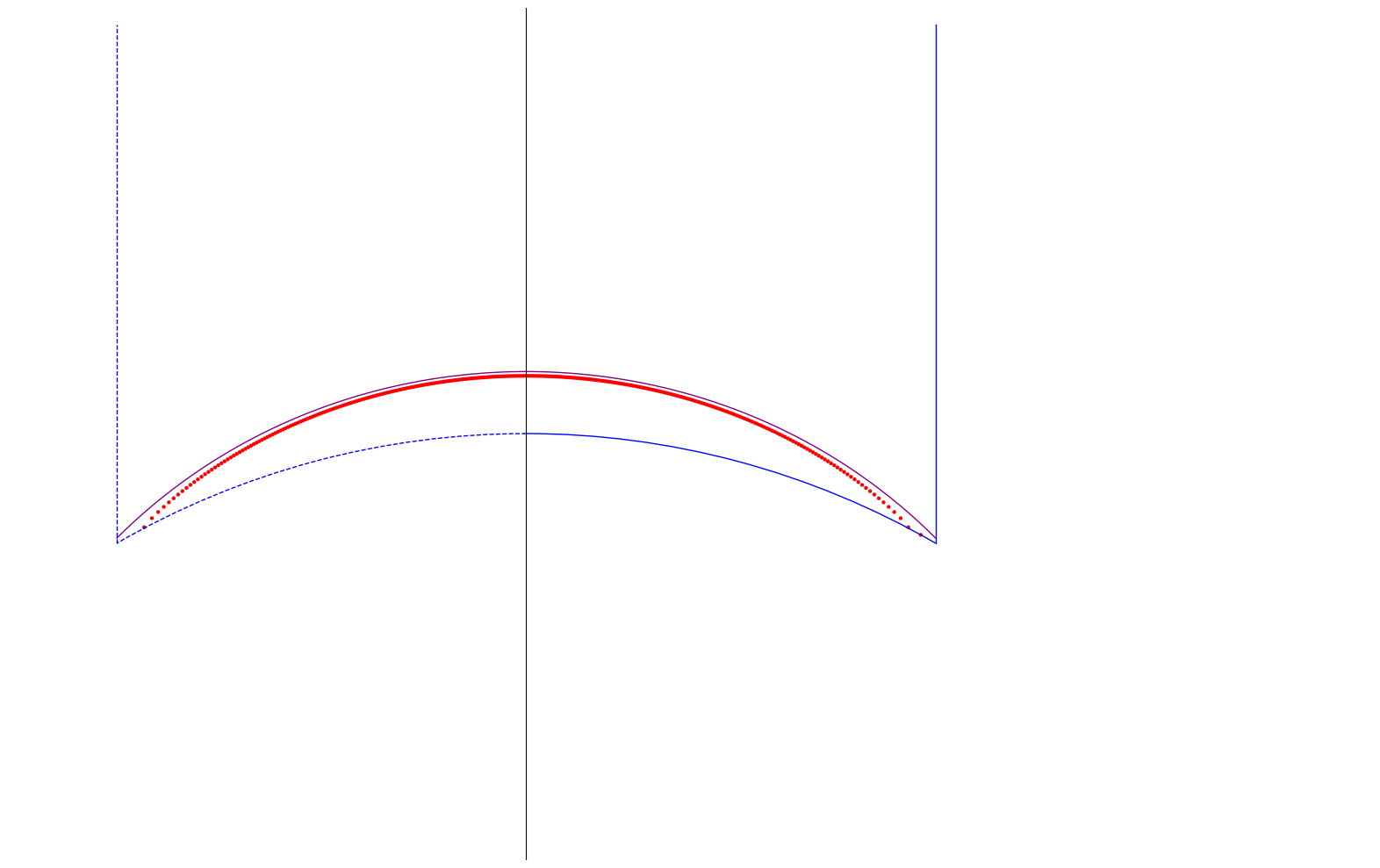}
         \subcaption{$\delta=1.1, k=-36000$}
     \end{subfigure}
     \hfill
     \begin{subfigure}[b]{0.3\textwidth}
         \centering
         \includegraphics[width=\textwidth]{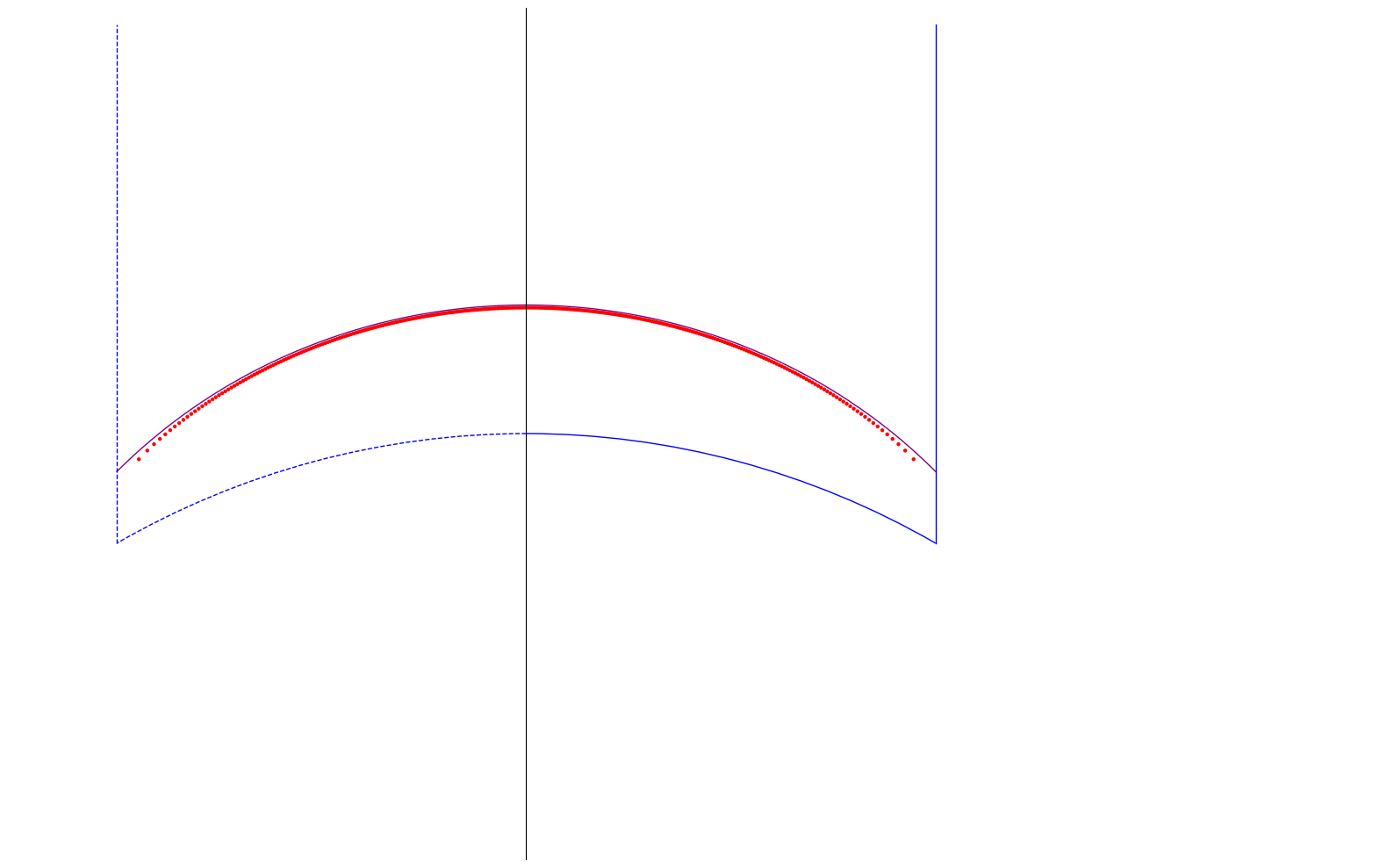}
         \subcaption{$\delta=1.06, k=-60000$}
     \end{subfigure}
     \hfill
     \begin{subfigure}[b]{0.3\textwidth}
         \centering
         \includegraphics[width=\textwidth]{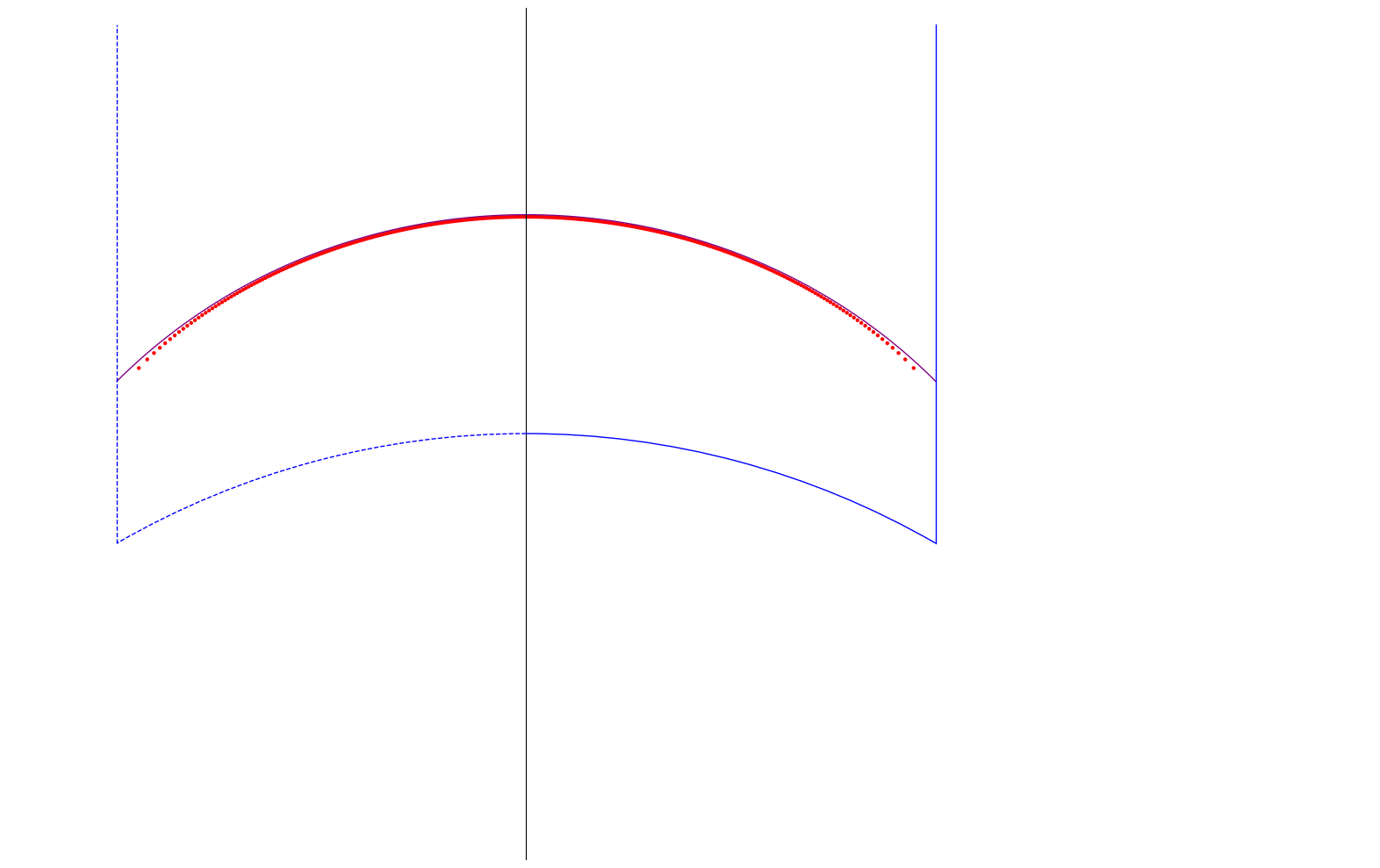}
         \subcaption{ for $\delta=1.03, k=-120000$}
         \end{subfigure}
         \caption{Roots and $\mathcal{S}_\delta$ for several values of $\delta\in (1,\infty)$}
     \end{figure}
\end{conjecture}

\begin{remark}\label{r:conjcutoffs}
Let
\begin{align*}
  \delta_{\mathcal{A}}^+&=1-\frac{24}{W(e^{-1})e^{\sqrt{3}\pi}}=0.6265\ldots&\delta_{\mathcal{A}}^-&=1+\frac{24}{W(e^{-1})e^{2\pi}}=1.1609\ldots\\
  \delta_{\mathcal{S}}^+&=1-\frac{24}{e^{2\pi}}=0.9551\ldots&\delta_{\mathcal{S}}^-&=1+\frac{24}{e^{\sqrt{3}\pi}}=1.1040\ldots
\end{align*}

The observation before Theorem B implies the following. 
\begin{enumerate}
\item If $\delta\in (0,1)$ then $\mathcal{C}_\delta=\mathcal{A}$
if $\delta<\delta_{\mathcal{A}}^+$ and
$\mathcal{C}_\delta=\mathcal{S}_\delta$ if
$\delta>\delta_{\mathcal{S}}^+$. If $\delta\in
(\delta_{\mathcal{A}}^+,\delta_{\mathcal{S}}^+)$, then
$\mathcal{C}_\delta$ contains
$\mathcal{A}_{\frac{\pi}{2},\mathcal{T}_\delta}$, where
$-\cos\mathcal{T}_\delta$ is the solution to
\[g_+(x)=\sqrt{1-x^2}+\frac{1}{2\pi}\log(1-\delta).\]
\begin{figure}[H]
\centering
\includegraphics[scale=0.15]{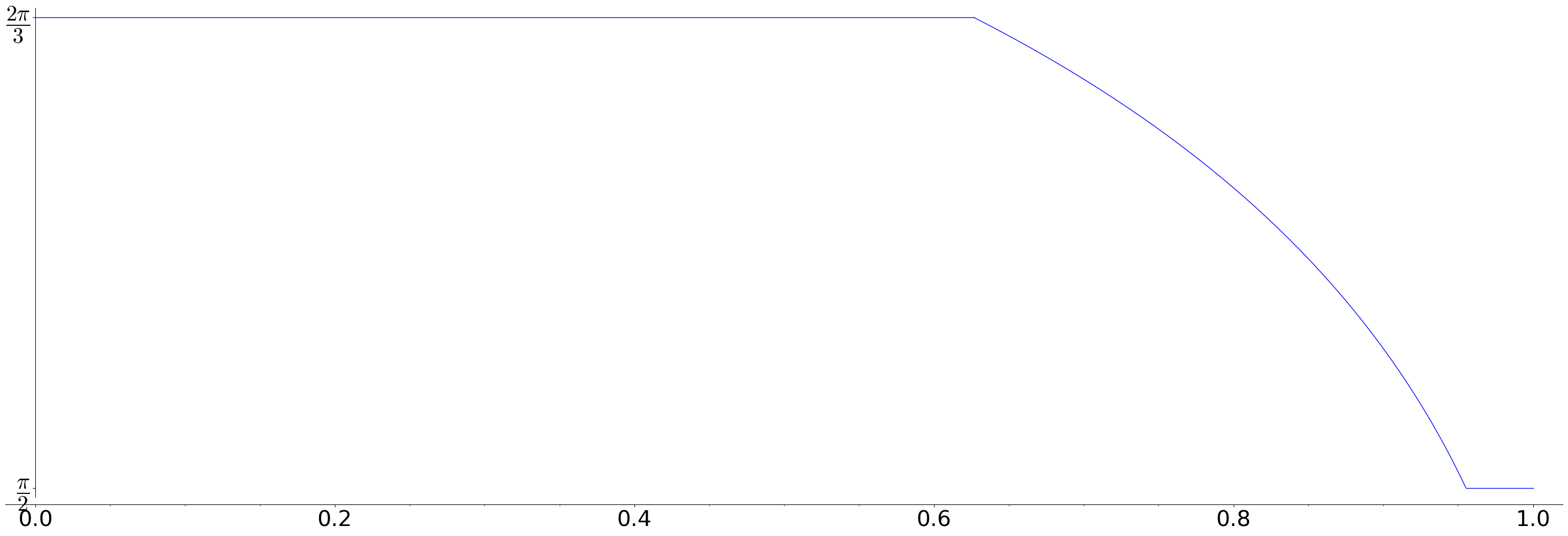}
\caption{Conjectural value of $\mathcal{T}_\delta$ with $\mathcal{A}_{\frac{\pi}{2},\mathcal{T}_-(\delta)}$ containing Miller zeros}
\end{figure}
The conjectural value of $\mathcal{T}_\delta$ is very close to the computational values of $\mathcal{T}(\delta)$ in Figure \ref{fig:calT1000}.
\item If $\delta\in (1,\infty)$ then $\mathcal{C}_\delta=\mathcal{A}$ if $\delta>\delta_{\mathcal{A}}^-$ and $\mathcal{C}_\delta=\mathcal{S}_\delta$ if $\delta<\delta_{\mathcal{S}}^-$. 
\end{enumerate}

\end{remark}
     
The results of the previous sections prove parts of this
Conjecture:
\begin{itemize}
\item By Theorem \ref{t:main-arc}, Conjecture \ref{conj} holds as $k\to \infty$ and
$\frac{m}{\ell}\to \delta<0.6194$. Moreover, if $\delta<0.9546$, the arc $\mathcal{A}_{\frac{\pi}{2},\mathcal{T}(\delta)}\subset \mathcal{C}_\delta$ contains some of the roots of $g_{k,m}$ for $\frac{m}{\ell}\to \delta$. 
\item By Theorem \ref{t:main-arc-weak}, Conjecture \ref{conj} holds as $k\to -\infty$ and
$\frac{m}{\ell}\to \delta>1.1598$. Moreover, if $\delta>1.1026$,
the arc $\mathcal{A}_{\mathcal{T}_-(\delta),\frac{2\pi}{3}}\subset \mathcal{C}_\delta$ contains
some of the roots of $g_{k,m}$ for $\frac{m}{\ell}\to \delta$.
\item Finally, by Theorem \ref{t:conj-tinyD}, Conjecture \ref{conj} holds if $|k|\to\infty$ and $D=\ell-m<\frac{\alpha\log |k|}{\log\log|k|}$ for some $\alpha\in (0,1)$. 
\end{itemize}
Thus, we successfully proved Conjecture \ref{conj} at the two
ends of the interval for $\delta$. When $\delta$ is close, but
not sufficiently close, to 1, we cannot account for the part of
the curve $\mathcal{C}_\delta$ that is not on the arc
$\mathcal{A}$, including showing that the curve
$\mathcal{C}_\delta$ goes up as $\delta$ increases to 1. We have
to contend with the following partial result. 

\begin{theorem}\label{t:logk}
If $k\to \infty$ then all the zeros of the Miller modular forms $g_{k,m}$ lie in the region $\{z\in \mathcal{F}\mid \Im z < \frac{2\log k}{1-c}+O(1)\}$, where $c=\frac{e^{2\pi}}{1728}$.
\end{theorem}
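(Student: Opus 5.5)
The plan is to work entirely with the Faber polynomial. Write $g_{k,m}=\Delta^\ell E_{k'}P(j)$ with $P(X)=\sum_{d=0}^D y_dX^{D-d}$ monic of degree $D=\ell-m$. Since $\Delta$ and $E_{k'}$ have no zeros in $\mathcal{F}$ away from the elliptic points, every non-elliptic zero $z$ of $g_{k,m}$ satisfies $P(j(z))=0$. For $\Im z$ large we have $\log|j(z)|=2\pi\Im z+O(1)$. So it suffices to bound every root $x$ of $P$ by
\[|x|\le C k^{\frac{4\pi}{1-c}}\]
for some constant $C$. Equivalently, $\log|x|\le \frac{4\pi\log k}{1-c}+O(1)$, which gives $\Im z<\frac{2\log k}{1-c}+O(1)$.

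For the root bound I will use the Fujiwara/Cauchy-type estimate: every root satisfies $|x|\le 2\max_{1\le d\le D}|y_d|^{1/d}$. The whole task is therefore to show $|y_d|^{1/d}\ll k^{2/(1-c)}$ uniformly in $d\le D\le \ell$. To get this, I use the triangular recursion \eqref{eq:faber}:
\[y_d=\mathcal{D}_{\ell,d}-\sum_{r=0}^{d-1}y_r c_{D-r,-(D-d)}.\]
Lemma \ref{l:rudnick-coeffs}(2) gives $|\mathcal{D}_{\ell,d}|\le (2\alpha k/d)^d\le (Ck)^d$. For the $c_{r,n}$, I use the Brisebarre–Philibert bounds: $c_{D-r,-(D-d)}\le 1728^{D-r}e^{-2\pi(D-d)}$ in general, and the sharper bound $(1728-e^{2\pi})^{d-r}\binom{\cdot}{\cdot}$-type estimate when $D-d\le c(D-r)$ fails.

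I will prove by induction an estimate of the form $|y_r|\le (Ak^{\lambda})^r$, with $\lambda=2/(1-c)$ and $A$ a large constant. Inserting this into the recursion, I split the sum over $r$ at $D-r=c^{-1}(D-d)$.
\begin{itemize}
\item In the range $r>D-c^{-1}(D-d)$, so $d-r<(c^{-1}-1)(D-d)$, the sharper bound gives $c_{D-r,-(D-d)}\le K^{d-r}\cdot(\text{binomial factor})$. The binomial factor is at most $2^{D-r}$, which is exponential in $d-r$ up to a factor controlled by $D-d$. These terms contribute at most $(Ak^\lambda)^{r}K'^{d-r}$, which is small compared with $(Ak^\lambda)^d$.
\item In the range $r\le D-c^{-1}(D-d)$, the crude bound gives $c\le 1728^{D-r}e^{-2\pi(D-d)}$. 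Here $D-d\le c(D-r)$, so $d-r\ge (1-c)(D-r)$, and hence $1728^{D-r}\le 1728^{(d-r)/(1-c)}$. The term is then at most $(Ak^\lambda)^r\,1728^{(d-r)/(1-c)}$.
\end{itemize}
The key point of the second range is that one power of $k$ gained per unit of $d-r$ has to absorb a factor growing like $D-r$, which can be as large as $(d-r)/(1-c)$. This is exactly where the exponent $1/(1-c)$ enters. The factor $2$ in the exponent comes from the combinatorial losses (the binomials and $D^{D}$-type quantities, bounded by $k^{d-r}$ since $D\le k$), which costs one more power of $k^{1/(1-c)}$.

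The main obstacle is making this induction close uniformly when $d$ is near $D$ and $D$ is of size comparable to $k$. There, Stirling factors such as $(D-r)^{D-r}/(D-d)^{D-d}$ are not exponential in $d-r$ alone. My first attempt will be to bound them by $e^{D-d}\cdot (D-r)^{d-r}\le e^{D-d}k^{d-r}$, and then use $D-d\le c(D-r)\le \frac{c}{1-c}(d-r)$ to convert everything into a factor $(Ck^{1/(1-c)})^{d-r}$. Summing the resulting geometric series in $d-r$, and choosing $A$ large, closes the induction with $\lambda=2/(1-c)$. The Fujiwara bound then yields the claimed region.
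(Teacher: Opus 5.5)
Your architecture, a direct induction $|y_r|\le (Ak^{\lambda})^r$ on the recursion \eqref{eq:faber} split at $D-r=c^{-1}(D-d)$, followed by the Fujiwara bound and $\log|j(z)|=2\pi\Im z+O(1)$, is a genuinely different and simpler route than the paper's. The paper instead inverts the unipotent matrix, bounds the entries $u_{D-i,-(D-j)}$ by maximizing $\BP$ over chains (Lemma~\ref{l:BP}), and then sums (Proposition~\ref{p:faber-coefficients-general}). Your bad-range bullet is correct. There $1728^{(d-r)/(1-c)}$ is a constant raised to the power $d-r$, so the constant $A$ absorbs it with no power of $k$ needed.

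\textbf{The gap.} It is in the good range $D-d>c(D-r)$, which is exactly the step you call the main obstacle. There $d-r<(c^{-1}-1)(D-d)$, so $D-d$ is bounded \emph{below} in terms of $d-r$, not above. The inequality $D-d\le \frac{c}{1-c}(d-r)$ you invoke holds only in the bad range. Consequently both of your bounds, $2^{D-r}=2^{D-d}2^{d-r}$ and $e^{D-d}(D-r)^{d-r}$, leave a factor exponential in $D-d$ that nothing absorbs. Take $d-r=1$ and $D-d=n$ of order $k$, which is possible since $D$ may be as large as $\ell$. The quantity you must make small is $c_{n+1,-n}/(Ak^{\lambda})$, but your bounds give roughly $2^{n}/k^{\lambda}$ or $e^{n}/k^{\lambda}$, so the induction does not close. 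For the same reason, your claim that good-range terms are at most $(Ak^\lambda)^rK'^{d-r}$ with a constant $K'$ is false, since $c_{n+1,-n}=744(n+1)$.

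\textbf{The fix.} The missing estimate is elementary. Set $n=D-d$ and $t=d-r$; the inequality $(1+t/n)^n\le e^t$ gives
\[\frac{(D-r)^{D-r}}{(D-d)^{D-d}(d-r)^{d-r}}=\Bigl(1+\frac{t}{n}\Bigr)^{n}\Bigl(\frac{n+t}{t}\Bigr)^{t}\le (eD)^{d-r}.\]
So each good-range term is at most $(Ak^\lambda)^d\bigl(e(1728-e^{2\pi})D/(Ak^{\lambda})\bigr)^{d-r}$. This is a geometric series, and it is small because $D\le \ell\le k/12$.

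With this repair your induction already closes for $\lambda=1$. That gives $|y_d|\le (Ak)^d$, hence $|j(z)|=O(k)$ and $\Im z\le \frac{\log k}{2\pi}+O(1)$. This is stronger than the stated theorem, and sharp when $D=1$, where the root of the Faber polynomial is about $-24\ell$. So your heuristic account of where the exponent $\frac{1}{1-c}$ and the factor $2$ come from is not right. They are artifacts of the paper's lossier chain-sum bounds, not something the recursion forces.
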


We will prove Theorem \ref{t:logk} using bounds on the coefficients of Faber polynomials. 

\subsection{Bounds on the coefficients of Faber polynomials}
When $D\leq k$, Faber polynomials are no longer approximated by truncated exponential polynomials, but we can still bound the growth of their coefficients. Indeed, we may rewrite \eqref{eq:faber} as 
\begin{align*}
  \begin{pmatrix} 1&0&0&0&0&0\\
  c_{D,-(D-1)}&1&0&0&0&0\\
  c_{D,-(D-2)}&c_{D-1,-(D-2)}&1&0&0&0\\
  c_{D,-(D-3)}&c_{D-1,-(D-3)}&c_{D-2,-(D-3)}&1&0&0\\
  &&&\vdots&&\\
  c_{D,0}&c_{D-1,0}&c_{D-2,0}&\ldots&c_{1,0}&1
  \end{pmatrix}\begin{pmatrix} y_0\\y_1\\\vdots\\y_D\end{pmatrix}=\begin{pmatrix} \mathcal{D}_{\ell,0}\\\mathcal{D}_{\ell,1}\\\vdots\\\mathcal{D}_{\ell,D}\end{pmatrix}.
\end{align*}
Inverting, we get
\begin{align*}
\begin{pmatrix} y_0\\y_1\\\vdots\\y_D\end{pmatrix}=  \begin{pmatrix} 1&0&0&0&0&0\\
  u_{D,-(D-1)}&1&0&0&0&0\\
  u_{D,-(D-2)}&u_{D-1,-(D-2)}&1&0&0&0\\
  u_{D,-(D-3)}&u_{D-1,-(D-3)}&u_{D-2,-(D-3)}&1&0&0\\
  &&&\vdots&&\\
  u_{D,0}&u_{D-1,0}&u_{D-2,0}&\ldots&u_{1,0}&1
  \end{pmatrix}\begin{pmatrix} \mathcal{D}_{\ell,0}\\\mathcal{D}_{\ell,1}\\\vdots\\\mathcal{D}_{\ell,D}\end{pmatrix},
\end{align*}
giving $\displaystyle y_d=\sum_{i=0}^d u_{D-i,-(D-d)}\mathcal{D}_{\ell,i}$,
where
\[u_{D-i,-(D-j)}=\sum_{D-j= i_s<i_{s-1}<\ldots <i_0=D-i}(-1)^s\prod_{r=0}^{s-1}c_{i_r,-i_{r+1}}.\]
Indeed, the inverse of lower unipotent matrices is given by $(I_m+N)^{-1}=\sum (-1)^s N^s$ where $N$ is nilpotent. We conclude that
\[|u_{D-i,-(D-j)}|\leq \sum_{D-j= i_s<i_{s-1}<\ldots
  <i_0=D-i}\prod_{r=0}^{s-1}c_{i_r,-i_{r+1}}.\]

We denote
\begin{align*}
  f(x,y)&=(x-y)\log(1728(1-c))+x\log x-y\log y -(x-y)\log(x-y)\\
  g(x,y)&=x\log 1728 - 2\pi y.
\end{align*}
Note that $f(x,cx)=g(x,cx)$ and $\lim\limits_{y\to x}f(x,y)=0$,
which means that
\[\BP(x,y)=\begin{cases}f(x,y)&y\geq cx\\g(x,y)&y<cx
\end{cases}
\]
is a continuous function on $\{(x,y)\mid x\geq y\geq
0\}$.

For convenience, we say a pair $(x,y)$ is good if $y\geq cx$ and is bad if $y<cx$. 
\begin{lemma}\label{l:BPdiff}
Given $z< x$, the function $h(y)=\BP(x,y)+\BP(y,z)$ on
the interval $(z,x)$ has the following monotonicity
\begin{center}
\begin{tabular}{llll}
  $(x,y)$&$(y,z)$&Increasing&Descreasing\\
  \hline
  good ($y\geq cx$)&good ($z\geq cy$)&$y< \frac{x+z}{2}$&$y> \frac{x+z}{2}$\\
  good ($y\geq cx$)&bad ($z< cy$)&$y< \frac{x}{2-c}$&$y> \frac{x}{2-c}$\\
  bad ($y< cx$)&good ($z\geq cy$)&always&\\
  bad ($y< cx$)&bad ($z< cy$)&always&
\end{tabular}
\end{center}
\end{lemma}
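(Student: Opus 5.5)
The plan is to differentiate $h(y)=\BP(x,y)+\BP(y,z)$ directly with respect to $y$ on each region where the two pairs keep a fixed good/bad type, and to read off the sign of $h'(y)$. On each such region $h$ is a sum of the smooth functions $f$ and $g$, each evaluated with $y$ in one slot. Since $\BP$ is continuous on $\{x\geq y\geq 0\}$ (as already observed, $f(x,cx)=g(x,cx)$), the monotonicity statements on the separate regions glue together into the table. The only inputs are elementary partial derivatives and the numerical facts $1728(1-c)=1728-e^{2\pi}$ and $c=e^{2\pi}/1728\approx 0.311<\tfrac12$.

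\textbf{Partial derivatives.} Write $L=\log(1728(1-c))$. When $y$ is the second argument, as in $\BP(x,y)$:
\[
\partial_y f(x,y)=-L+\log\frac{x-y}{y},\qquad \partial_y g(x,y)=-2\pi.
\]
When $y$ is the first argument, as in $\BP(y,z)$:
\[
\partial_y f(y,z)=L+\log\frac{y}{y-z},\qquad \partial_y g(y,z)=\log 1728.
\]
(The $\pm1$ terms coming from $\frac{d}{dt}(t\log t)$ cancel in $f$.)

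\textbf{Sign of $h'$ in the four cases.}
\begin{itemize}
\item Good/good: $h'(y)=\log\frac{x-y}{y-z}$. This is positive exactly when $x-y>y-z$, i.e.\ when $y<\frac{x+z}{2}$.
\item Good/bad: $h'(y)=-L+\log\frac{x-y}{y}+\log1728=\log\frac{x-y}{(1-c)y}$. This is positive exactly when $y<\frac{x}{2-c}$.
\item Bad/good: $h'(y)=-2\pi+L+\log\frac{y}{y-z}$. Using $\log(1728-e^{2\pi})-2\pi=\log(c^{-1}-1)$, this equals $\log\frac{(1-c)y}{c(y-z)}$. It is positive iff $(1-2c)y+cz>0$, which always holds because $c<\tfrac12$ and $y,z\geq 0$.
\item Bad/bad: $h'(y)=\log1728-2\pi=\log c^{-1}>0$, so $h$ is always increasing.
\end{itemize}

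The only step needing care is the bad/good case, where one must notice the identity $L-2\pi=\log\frac{1-c}{c}$ and then use $c<1/2$. Everything else is routine bookkeeping, and the table follows.
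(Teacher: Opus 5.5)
Your proof is correct and is essentially the paper's: the paper just lists the four derivatives
$\log\frac{x-y}{y-z}$, $\log\frac{x-y}{(1-c)y}$, $\log\frac{(1-c)y}{c(y-z)}$ and $-\log c$, which are exactly the ones you computed.
Your explicit sign check in the bad/good case fills in a step the paper leaves to the reader. You can use $c<\tfrac12$ as you did, or simply note $\frac{y}{y-z}\ge 1$ and $\frac{1-c}{c}>1$.
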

\begin{proof}
Follows from
\begin{align*}
\partial_y \left(f(x,y)+f(y,z)\right)&=\log(x-y)-\log(y-z)\\
\partial_y \left(f(x,y)+g(y,z)\right)&=\log(x-y)-\log y-\log(1-c)\\
\partial_y \left(g(x,y)+f(y,z)\right)&=\log y -\log(y-z)+\log\left(\frac{1-c}{c}\right)\\
\partial_y \left(g(x,y)+g(y,z)\right)&=-\log c.
\end{align*}

\end{proof}

\begin{lemma}\label{l:BP}
If $D-j=i_s<i_{s-1}<\ldots<i_0=D-i$ then
\[\prod_{r=0}^{s-1}c_{i_r,-i_{r+1}}\leq \max(T_1,T_2,T_2',T_3,T_3',T_4),\]
where
\begin{align*}
  T_1&=(1728-e^{2\pi})^{j-i}s^{j-i}\frac{(D-i)^{D-i}}{(D-j)^{D-j}(j-i)^{j-i}}\\
  T_2&=2981^{D-i}1.25^{D-j}\\
  T_2'&=(2393s)^{D-i}1.45^{j-i}\\
  T_3&=(2386s)^{D-i}2924^{-(D-j)}\\
  T_3'&=5849^{D-i}2924^{-(D-j)}\\
  T_4&=1728^{D-i}e^{-2\pi (D-j)}.
\end{align*}
When $D-j>c(D-i)$, it suffices to consider only $T_1$. In
particular, if $\mathcal{A}=5849$ and $\mathcal{B} = 1.5$ then
\[\prod_{r=0}^{s-1}c_{i_r,-i_{r+1}}\leq \max(T_1,(\mathcal{A}s)^{D-i}\mathcal{B}^{D-j}).\]
\end{lemma}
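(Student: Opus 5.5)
The plan is to take logarithms, bound each factor by the Brisebarre–Philibert estimates, and reduce the statement to maximizing a sum of the function $\BP$ over chains. For a pair $i_r>i_{r+1}$ with $i_{r+1}\ge c\,i_r$ (a good pair), \cite[Proposition 4.2]{brisebarre-philibert} gives
\[c_{i_r,-i_{r+1}}\le (1728-e^{2\pi})^{i_r-i_{r+1}}\frac{i_r^{i_r}}{i_{r+1}^{i_{r+1}}(i_r-i_{r+1})^{i_r-i_{r+1}}}=e^{f(i_r,i_{r+1})}.\]
This uses $1728(1-c)=1728-e^{2\pi}$. For a bad pair, \cite[Proposition 4.1]{brisebarre-philibert} gives $c_{i_r,-i_{r+1}}\le 1728^{i_r}e^{-2\pi i_{r+1}}=e^{g(i_r,i_{r+1})}$. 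Hence
\[\log\prod_{r=0}^{s-1}c_{i_r,-i_{r+1}}\le \sum_{r=0}^{s-1}\BP(i_r,i_{r+1}).\]
It then suffices to bound this sum for a real chain $x_0=D-i>x_1>\dots>x_s=D-j$. The case $D-j>c(D-i)$ is immediate: every $x_{r+1}\ge D-j>c(D-i)\ge c\,x_r$, so all pairs are good. Then $\sum f(x_r,x_{r+1})$ is $(j-i)\log(1728-e^{2\pi})$ plus the log of a product of binomial-type entropy factors. That product is at most the multinomial coefficient $\binom{D-i}{D-j,\,x_0-x_1,\dots}$ (in Stirling form), which in turn is at most $s^{j-i}\frac{(D-i)^{D-i}}{(D-j)^{D-j}(j-i)^{j-i}}$. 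This gives $T_1$.

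In general, I would optimize one interior point at a time using Lemma \ref{l:BPdiff}. With $x_{r-1},x_{r+1}$ fixed, $h(x_r)$ is increasing whenever the lower pair $(x_r,x_{r+1})$ is bad. So a bad step can be pushed up until either it becomes good or $x_r$ merges with $x_{r-1}$; the merge is allowed because $\BP(x,y)\to 0$ as $y\to x$ and the chain just shortens. Iterating, the extremal chains have a rigid shape:
\begin{itemize}
\item all steps good, which gives $T_1$;
\item a single bad step at the bottom, after a block of good steps;
\item a single bad step at the top;
\item the chain is exactly one bad step, $\BP(D-i,D-j)=g(D-i,D-j)$, which gives $T_4$.
\end{itemize}
In the good-then-bad case, the good block is optimized as in $T_1$ (equal spacing, contributing at most a factor $s^{\cdot}$). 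The junction point sits either at the critical value $y=x/(2-c)$ from the good/bad row of Lemma \ref{l:BPdiff}, or at the boundary $y=cx$. Evaluating $f(x,\cdot)+g(\cdot,D-j)$ at these points gives exponentials in $D-i$ and $D-j$ with explicit numerical bases. I expect these to produce $T_2$, $T_2'$ (critical point, with or without the $s$-factor from the good block) and $T_3$, $T_3'$ (boundary point, where $-2\pi c^{-1}$-type terms give the $2924^{-(D-j)}$).

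This explicit two-sided optimization is the main obstacle. The difficulty is that Lemma \ref{l:BPdiff} controls only local moves, so one must check that no other junction pattern (good-bad-good) survives. It is eliminated because the bad/good row is always increasing, which again pushes toward a merge. The numerical constants $2981$, $2393$, $2386$, $5849$, $1.25$, $1.45$ should come from crude upper bounds of expressions such as $(1728-e^{2\pi})^{1-1/(2-c)}(2-c)\cdots$, which I would verify numerically.

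Finally, the ``in particular'' follows by comparing each term with $(\mathcal{A}s)^{D-i}\mathcal{B}^{D-j}$, using $s\ge1$ and $0\le j-i\le D-i$:
\begin{itemize}
\item $2981^{D-i}1.25^{D-j}\le(5849s)^{D-i}1.5^{D-j}$;
\item $(2393s)^{D-i}1.45^{j-i}\le(2393\cdot1.45\,s)^{D-i}\le(5849s)^{D-i}$;
\item $T_3,T_3'\le(5849s)^{D-i}$, since $2924^{-(D-j)}\le1$;
\item $T_4\le 1728^{D-i}$.
\end{itemize}
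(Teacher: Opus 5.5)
Your overall plan is the paper's: bound each factor by $e^{\BP}$, maximize $\sum\BP$ over real ladders via Lemma~\ref{l:BPdiff}, and classify the extremal ladders. Your entropy/multinomial argument for all-good chains is a clean way to get $T_1$ and the case $D-j>c(D-i)$.

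The gap is in the reduction step, where you read Lemma~\ref{l:BPdiff} backwards. Its table says $h(x_r)=\BP(x_{r-1},x_r)+\BP(x_r,x_{r+1})$ is always increasing when the \emph{upper} pair $(x_{r-1},x_r)$ is bad. When the upper pair is good and the lower pair $(x_r,x_{r+1})$ is bad, $h$ increases only up to $x_r=x_{r-1}/(2-c)$ and then decreases. So ``increasing whenever the lower pair is bad'' is false exactly in the good/bad row that you later use for the junction. The accompanying picture also fails: raising $x_r$ keeps a bad lower pair bad, and $x_r$ cannot be merged into $x_{r-1}$ past the critical point. Consequently your list of extremal shapes is unjustified and internally inconsistent. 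Your rule would eliminate the bottom bad step you list, yet retain a ``bad step at the top'' followed by good steps, and that top-bad configuration is never bounded.

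The correct deduction, as in the paper, runs as follows. If $(x_{r-1},x_r)$ is bad with $r<s$, raising $x_r$ strictly increases the sum (bad/good and bad/bad rows). So at a maximizer, only the last pair $(x_{s-1},x_s)$ can be bad. The maximizer exists by compactness, after allowing repeated points and then deleting them, since $\BP(x,x)=0$.
\begin{itemize}
\item If the chain is a single step, this gives $T_4$.
\item Otherwise the good/bad row forces $x_{s-1}=x_{s-2}/(2-c)$ (the line $\mathcal{L}'$), and the good/good row forces equal spacing above it. Evaluating gives $T_2$ (two steps) and $T_2'$ (general).
\end{itemize}

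Your ``junction at $y=cx$'' is not an extra case. With the upper pair on $\mathcal{L}$ the point is not critical, since $c<1/(2-c)$. With the lower pair on $\mathcal{L}$ every pair is good, so $T_1$ already covers it. In the paper, $T_3,T_3'$ come from ladders whose tail lies on $\mathcal{L}$ ($y_{m-i}=c^{-i}A$). These are all-good as well, so your entropy bound in fact shows those two terms are superfluous.

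Finally, the numerical constants in $T_2,T_2'$ still need to be computed; your proposal only anticipates them.
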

\begin{proof}
We will bound
\[\sum_{r=0}^{s-1} \log c_{i_r,-i_{r+1}}\leq
\sum_{r=0}^{s-1}\BP(i_r,i_{r+1}).\]
To estimate an upper bound, we will determine the maximum of the continuous function
\[\BP(x_0,x_1,\ldots,x_n)=\sum_{r=0}^{s-1}\BP(x_r,x_{r+1}),\]
in the compact set
\[\{(x_0,\ldots, x_n)\mid A=x_n\leq x_{n-1}\leq\ldots \leq x_0=B\}.\]
Let $(x_0,\ldots, x_n)$ be the point
where the maximum is attained.

Since $\BP(x,x)=0$, we may instead assume that the maximum is
attained at a point $(y_0,\ldots, y_m)$ such that
$A=y_m<y_{m-1}<\ldots<y_0=B$, where $m\leq n$, by eliminating
duplicates.

We will represent the sequence $A=y_m<y_{m-1}<\ldots <y_0=B$ as
a ladder



\begin{center}
\includegraphics[scale=0.5]{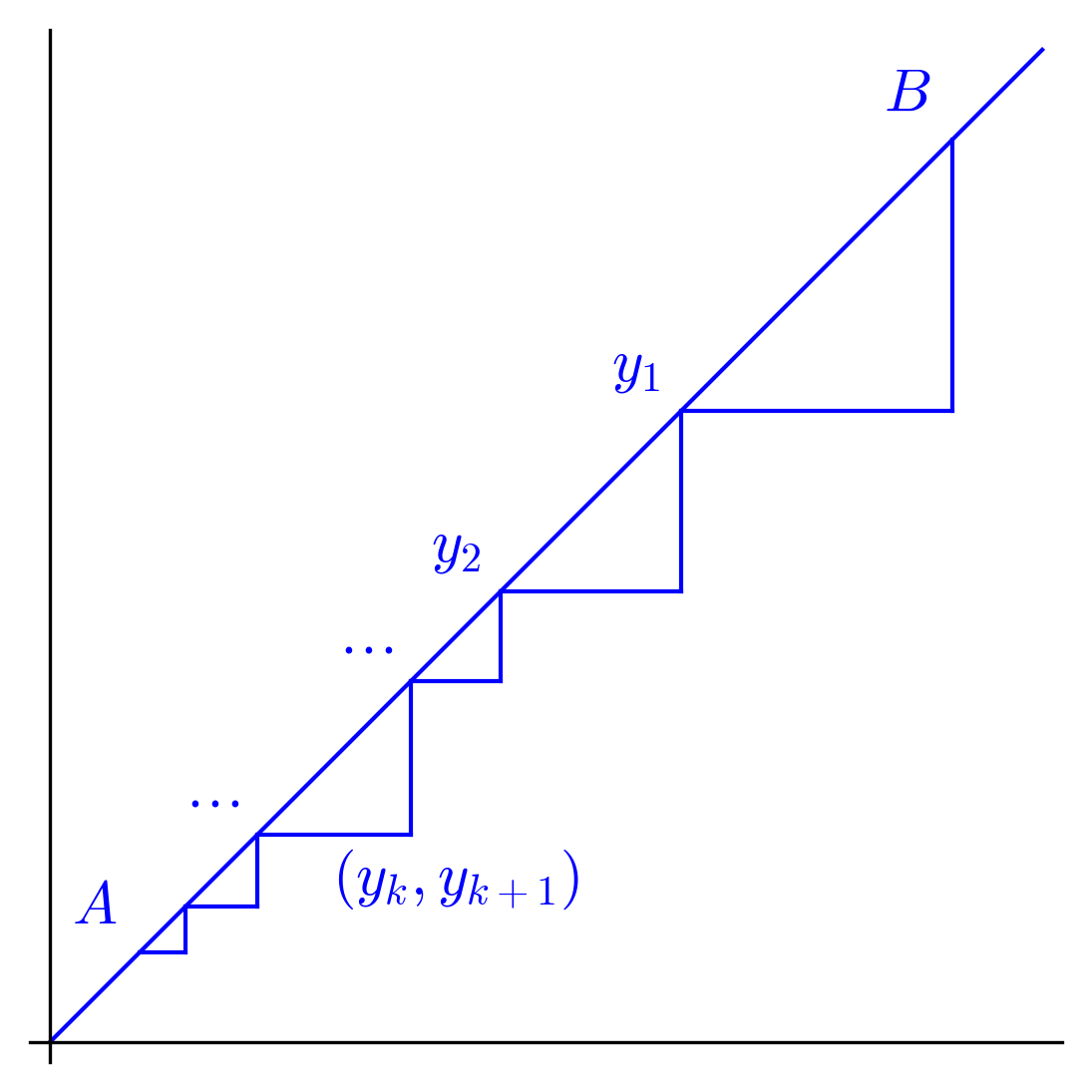}
\end{center}

To describe $(y_0,\ldots, y_m)$ at the maximum, we will
repeatedly use Lemma \ref{l:BPdiff}. We denote $\mathcal{L}$ the line $y=cx$ and $\mathcal{L}'$ the line $y=\frac{1}{2-c}x$.

Suppose that $P_k=(y_k,y_{k+1})$ is bad. First, note that $P_{k-1}=(y_{k-1},y_k)$
can't also be bad, since increasing
$y_k$ would increase the function. Thus 
$P_{k-1}$ is good and located on the line $\mathcal{L}'$, or else increasing or decreasing
$y_k$ would increase the function.

Suppose $P_k=(y_k,y_{k+1})$ is good, but not on the line
$\mathcal{L}$. If $P_{k-1}$ is bad, we could increase $y_k$ to increase
the value of the function, which means $P_{k-1}$ is also
good. If $P_{k-1}$ is on the line $\mathcal{L}$, then
$y_{k-1}+y_{k+1}>\frac{1}{c}y_k>2y_k$, so we may increase $y_k$
keeping both $P_k$ and $P_{k-1}$ good to increase the value of
the function. If $P_{k-1}$ is not on the line $\mathcal{L}$, it must be
that $2y_k=y_{k-1}+y_{k+1}$. We conclude that every good point not on the line $\mathcal{L}$ must be preceeded by evenly spaced out good points, also not on the line $\mathcal{L}$.

Finally, if $P_k$ is on the line $\mathcal{L}$, by the same
argument, the previous point $P_{k-1}$ must be good with
$y_{k-1}-y_k\geq y_k-y_{k+1}$.

Therefore, there are four possible configurations:
\begin{enumerate}
\item All points $P_0,\ldots, P_{m-1}$ are good, not on
$\mathcal{L}$, with $y_0,\ldots, y_m$ evenly spaced out.
\item The last point $P_{m-1}$ is bad, $P_{m-2}$ is on
$\mathcal{L}'$, and $y_0,\ldots, y_{m-1}$ are evenly spaced out.
\item The points $P_0,\ldots, P_{q-1}$ are good, not on
$\mathcal{L}$, with $y_0,\ldots, y_q$ evenly spaced out, and
$P_q,\ldots, P_{m-1}$ are all on $\mathcal{L}$.
\item The last, degenerate, case is $m=1$, $y_0=B$, $y_1=A$.
\end{enumerate}




\begin{figure}[htp]
\centering
\includegraphics[width=.3\textwidth]{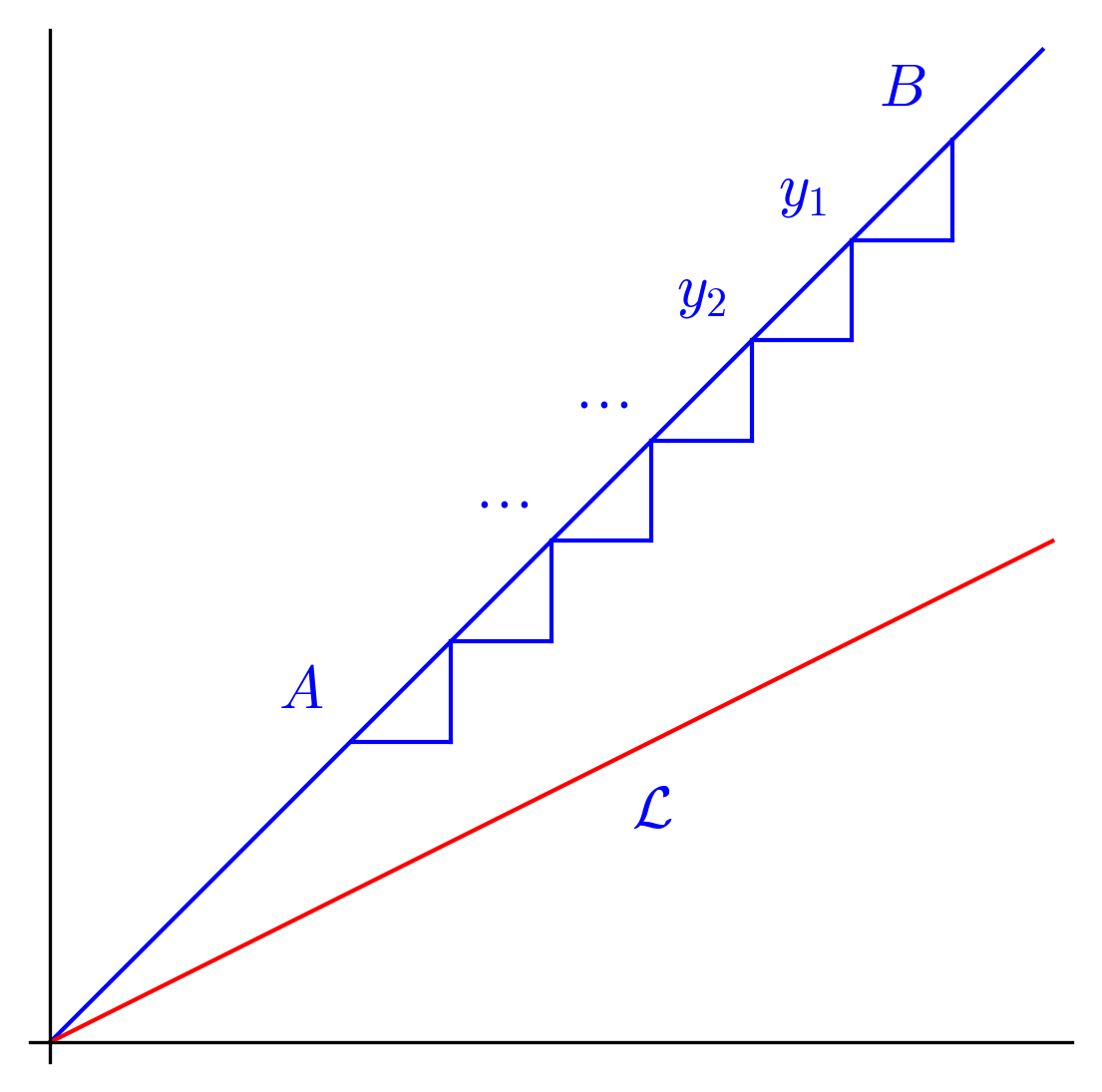}\hfill
\includegraphics[width=.3\textwidth]{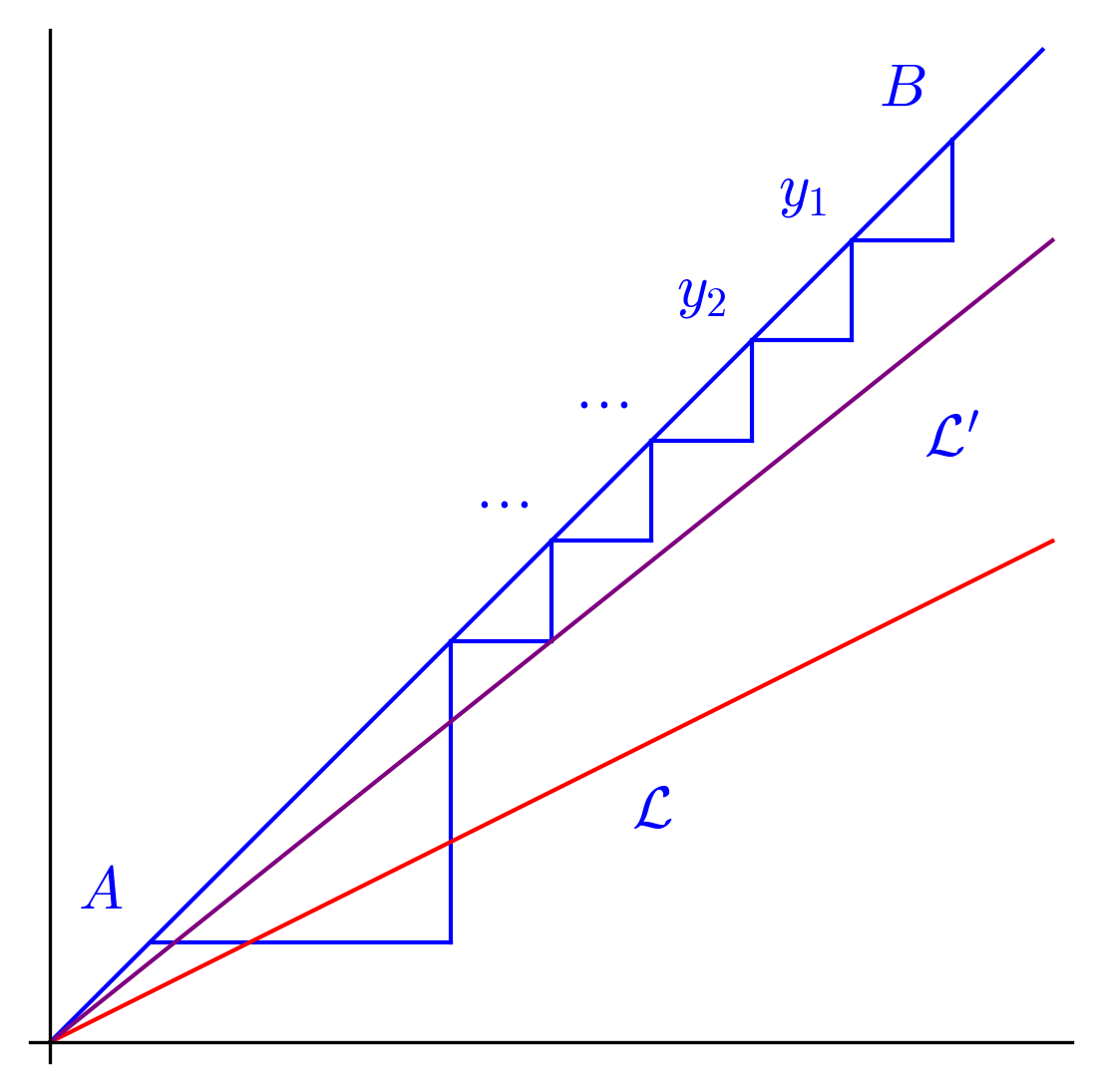}\hfill
\includegraphics[width=.3\textwidth]{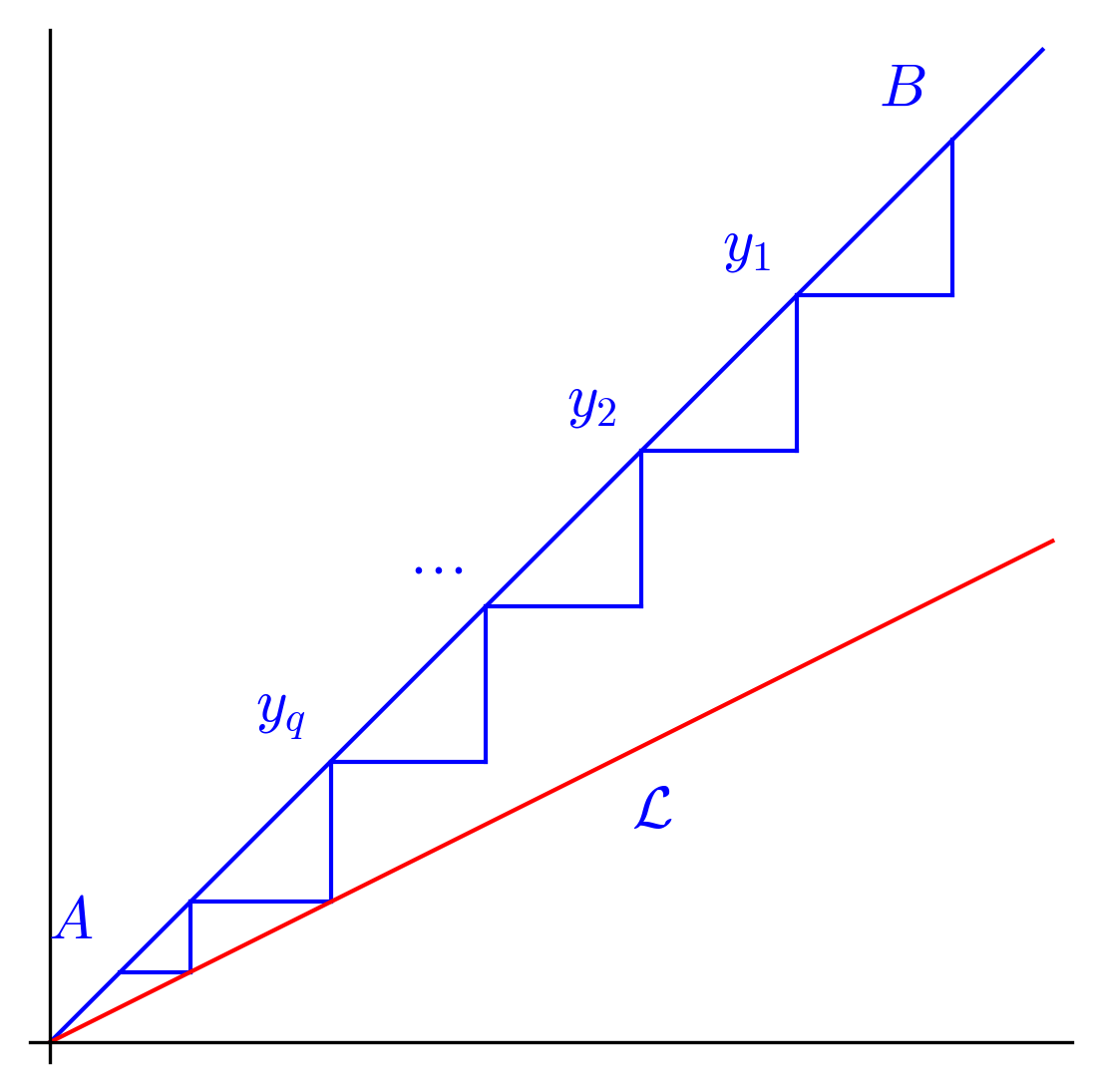}
\end{figure}

In each of these cases, we can determine $y_0,\ldots, y_m$
exactly. For convenience, we denote $[x,y]=x\log x-y\log y-(x-y)\log(x-y)\leq x\log 2$.
\begin{enumerate}
\item For $1\leq u\leq m$, $y_i = B-(B-A)i/m$. In this case, the
value of the function is
\begin{align*}
\BP(y_0,\ldots,y_m)&=\sum_{i=0}^{m-1}\left((y_i-y_{i+1})\log(1728(1-c))+y_i\log
                     y_i-y_{i+1}\log y_{i+1}-\frac{B-A}{m}\log
                     \frac{B-A}{m}\right)\\
  &=(B-A)(\log m+\log(1728(1-c)))+B\log B-A\log A-(B-A)\log (B-A)\\
  &=(B-A)(\log m+\log(1728(1-c)))+[B,A],
\end{align*}
which is maximized when $m=n$.
\item The remaining three cases can only occur if $A< \mathcal{B}$. We have $y_m=A$, $y_{m-1}=A+a$, $y_{m-2}=A+a+b$, \ldots,
$y_0=B=A+a+(m-1)b$ with $y_{m-1}=\frac{1}{2-c}y_{m-2}$. This
gives $b=(1-c)(A+a)$ and
\[a=\frac{B+A}{1+(m-1)(1-c)}-A.\]
In this case, denoting $N=1+(m-1)(1-c)$ the
value of the function is
\begin{align*}
g(A+a,A) &+\BP(y_0,\ldots,y_{m-1})\\ &=(A+a)\log 1728-2\pi
                                  A+(B-A-a)(\log(m-1)+\log(1728(1-c)))\\
                                &\ \ +B\log B-(A+a)\log (A+a)-(B-A-a)\log (B-A-a)\\
                                &=B(\log(1728)+\log(N-1))-(A+a)\log
                   (N-1)+[B,A+a]\\
  &\leq B(\log 1728+\log 2)+(B-\frac{A+B}{N})\log(N-1).
\end{align*}
When $m=2$, $N=2-c$ and the quantity above is
\[\leq B\left(\log 1728+\log 2+\left(1-\frac{1}{2-c}\right)\log(1-c)\right)-\frac{\log(1-c)}{2-c}A\leq 8B+0.22A.\]
When $m\geq 2$, $N=1+(1-c)(m-1)>2$ and the quantity above is
maximized when $m=n$ giving
\[\leq B\left(\log 1728+\log
2+\left(1-\frac{2}{N}\right)\log(N-1)\right)+(B-A)\frac{\log(N-1)}{N-1}\]
\[<B\left(\log 1728+\log
2+\log(1-c)+\log(n)\right)+e^{-1}(B-A)<(7.78+\log n)B+e^{-1}(B-A).\]

\item For $0\leq i\leq m-q$ we have $y_{m-i}=c^{-i}A$, and $y_j=B-(B-c^{q-m}A)j/(m-q)$ for $0\leq j\leq q$. In this case, the
value of the function is
\[A \left(\frac{c^{q-m}-1}{c^{-1}-1}\right)\left(c^{-1}\log
1728-2\pi\right)+(B-c^{q-m}A)(\log(q)+\log(1728(1-c)))+\left[B,c^{q-m}A\right]\]
\[\leq B\log 2 + A \left(\frac{c^{q-m}-1}{c^{-1}-1}\right)\left(c^{-1}\log
1728-2\pi\right)+(B-c^{q-m}A)(\log(q)+\log(1728(1-c)))\]
\[= B\log 2 - A \left(\frac{\log
  1728-2\pi c}{1-c}\right)+B(\log(q)+\log(1728(1-c)))+\]
\[+c^{q-m}A\left(\frac{(\log
1728-2\pi) c}{1-c}-(\log(q)+\log(1-c))\right)\]
Note that the last term is negative if $q\geq 3$. Keeping in
mind that $Ac^{q-m}\leq B$ we get that this is
\[\leq B\log 2 - A \left(\frac{\log
1728-2\pi c}{1-c}\right)+B(\log(q)+\log(1728(1-c)))\]
\[\leq B\left(\log 2 +\log(1728(1-c))+\log n\right)- A \left(\frac{\log
1728-2\pi c}{1-c}\right)\leq (7.78+\log n)B-7.98A\]
if $q\geq 3$ and 
\[\leq B\left(\log 2+\frac{\log
1728-2\pi c}{1-c}\right) - A \left(\frac{\log
1728-2\pi c}{1-c}\right)\leq 8.68B -7.98A\]
if $q\leq 2$.

We remark that this case can only occur if $A\leq cB$.
\item In the degenerate case when $y_0=B$ and $y_1=A$ the value
of the function is
\[B\log 1728-2\pi A.\]
\end{enumerate}

\end{proof}

\begin{proposition}\label{p:faber-coefficients-general}
We have $\displaystyle y_d=O\left(\max(U_1,U_2,U_3)\right)$,
where 
\begin{align*}
  U_1&=\left(\frac{2e \alpha k}{d}\right)^d\\
  U_2&=\frac{(\alpha k)^R(2e)^d(1728-e^{2\pi})^{d-R}(D-R)^{D-R}}{R^R(D-d)^{D-d}}\\
  U_3&=\left(\alpha \mathcal{B}^ckd)\right)^{\frac{d}{1-c}}.
\end{align*}
Here $\alpha=\left(\frac{25}{24}\right)^{25}$, $U_3$ is considered only when $d>(1-c)D$, and $U_2$ only when $R=\frac{1}{2}(D-\sqrt{D^2-4k/C})$ is real with $R<d$, where $C=\alpha^{-1}e^2(1728-e^{2\pi})$.
\end{proposition}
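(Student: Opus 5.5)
The plan is to start from the inversion formula $y_d=\sum_{i=0}^d u_{D-i,-(D-d)}\mathcal{D}_{\ell,i}$ and bound each term. For the coefficients $\mathcal{D}_{\ell,i}$ I will use Lemma \ref{l:rudnick-coeffs}(2), namely $|\mathcal{D}_{\ell,i}|=O((2\alpha k/i)^i)$. For the $u$'s I will combine the bound $|u_{D-i,-(D-d)}|\le\sum\prod c_{i_r,-i_{r+1}}$ with Lemma \ref{l:BP}. There are $2^{d-i-1}$ chains $D-d=i_s<\dots<i_0=D-i$, and each chain length satisfies $s\le d-i$. So Lemma \ref{l:BP} gives
\[|u_{D-i,-(D-d)}|\le 2^{d-i}\max\bigl(T_1,(\mathcal{A}(d-i))^{D-i}\mathcal{B}^{D-d}\bigr),\]
where $T_1$ is evaluated at $s=d-i$. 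Moreover, only $T_1$ is needed when $D-d>c(D-i)$. Writing $t=d-i$, this gives $y_d$ as a sum over $0\le i\le d$ of at most two types of terms. I then bound the sum by $(d+1)$ times its largest term, absorbing the polynomial factor into the $O$ (or into the exponential constants $2e$).

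\textbf{Step 1: the $T_1$ terms.} Using $s^{t}/t^{t}\le 1$ with $s=t$, the $T_1$ term reduces to
\[\frac{(2\alpha k)^i}{i^i}\,2^t(1728-e^{2\pi})^t\frac{(D-i)^{D-i}}{(D-d)^{D-d}}.\]
I will treat this as a function of a continuous variable $i$ and maximize its logarithm. The derivative in $i$ is, up to constants,
\[\log(\alpha k)-\log i-\log(1728-e^{2\pi})-\log(D-i)+O(1).\]
Its sign is governed by whether $i(D-i)$ is below or above $\alpha k/(e^{2}(1728-e^{2\pi}))=k/C$, after the $e$'s coming from Stirling are tracked. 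If $i(D-i)<k/C$ on all of $[0,d]$, which is automatic when $D^2<4k/C$ or when $d<R$, the expression is increasing and the maximum is at $i=d$. This gives $(2\alpha k/d)^d$, and after the $e^d$ from Stirling it is $U_1$. Otherwise the critical point is the smaller root $R=\tfrac12(D-\sqrt{D^2-4k/C})$. Evaluating there, with $i=R<d$, gives exactly $U_2$, with the factors $(2e)^d$ collecting $2^t$ together with the Stirling constants. The larger root gives a local minimum, so the endpoint $i=d$ again contributes $U_1$.

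\textbf{Step 2: the second term, only when $D-d\le c(D-i)$, i.e.\ $i\le D-(D-d)/c$.} This forces $d>(1-c)D$, which is why $U_3$ appears only in that range. The term is
\[(2\alpha k/i)^i\,(2\mathcal{A}t)^{D-i}\mathcal{B}^{D-d}.\]
Here $D-i\le (D-d)/c$ is small relative to $t$, and $D-d<cD$. I will bound $t\le d$ and $D-i\le (D-d)/c\le d\cdot c/(1-c)$. Then I maximize over $i$, either at the boundary $i=D-(D-d)/c$ or via the monotonicity in $i$ (for large $k$ the term increases in $i$). Using $D\le d/(1-c)$, this should collapse to $(\alpha\mathcal{B}^c k d)^{d/(1-c)}$ up to constant factors, absorbed as in the statement.

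\textbf{Main obstacle.} I expect the obstacle to be this last collapse. The constants $\mathcal{A}$ and $2^{t}$ must be absorbed without spoiling the exponent $d/(1-c)$. I would first try to show that the exponent of $k$ and $d$ is at most $D-i+i\le d/(1-c)$ and bound everything else crudely by $O(1)^{d/(1-c)}$. If the crude constants do not fit, I would redefine the implicit constant, since the statement is only up to $O$. A secondary, bookkeeping-level difficulty is tracking the Stirling factors precisely enough that $C$ comes out as $\alpha^{-1}e^2(1728-e^{2\pi})$.
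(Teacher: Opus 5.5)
Your proposal follows the paper's proof essentially step for step: the inversion $y_d=\sum_i u_{D-i,-(D-d)}\mathcal{D}_{\ell,i}$, Lemma \ref{l:rudnick-coeffs}(2), Lemma \ref{l:BP}, the log-derivative in $i$ changing sign at $i(D-i)=k/C$, and the split at $i=D-(D-d)/c$ that produces $U_3$. In that log-derivative, your $2^{t}$ from counting all $2^{t-1}$ chains with $s\le t$ replaces the paper's $e^{t}$ from $\sum_s t^s/s!$, and the $\log 2$'s cancel, so you recover exactly $C=\alpha^{-1}e^2(1728-e^{2\pi})$.

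The only slip is in Step 2, where two inequalities are wrong. In that range $D-i\ge (D-d)/c$, not $\le$, and $(D-d)/c<d/(1-c)$ rather than $\le dc/(1-c)$. This does no harm, since all you need is $i,\;D-i\le D<d/(1-c)$, which is exactly your fallback. The constants left in the base are ones the paper's $U_3$ also silently drops (e.g.\ its $(2e)^d$).
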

\begin{proof}
We know that
\[y_d=\sum_{r=0}^{d} u_{D-r,-(D-d)} \mathcal{D}_{\ell,r},\]
with
$\mathcal{D}_{\ell,d}=O\left(\left(\frac{2 \alpha
  |k|}{d}\right)^d\right)$ by Lemma \ref{l:rudnick-coeffs},
where $\alpha=\left(\frac{25}{24}\right)^{25}$. Here,
\[|u_{D-r,-(D-d)}|\leq
\sum_{D-d=i_s<i_{s-1}<\ldots<i_0=D-r}\prod c_{i_u,i_{u+1}}\leq
\sum_{s=1}^{d-r}\frac{(d-r)^s}{s!}\max \prod c_{i_u,i_{u+1}}.\]
Whenever $D-d>c(D-r)$ (which is always guaranteed when $d<D(1-c)$), we know from Lemma \ref{l:BP} that
\[\max \prod c_{i_u,i_{u+1}}\leq
(1728-e^{2\pi})^{d-r}s^{d-r}\frac{(D-r)^{D-r}}{(D-d)^{D-d}(d-r)^{d-r}}.\]
Thus, for $d<D(1-c)$, we have
\begin{align*}
|y_d|&=O\left(\sum_{r=0}^{d}\left(\frac{2 \alpha k}{r}\right)^r(1728-e^{2\pi})^{d-r}\frac{(D-r)^{D-r}}{(D-d)^{D-d}(d-r)^{d-r}}\sum_{s=1}^{d-r}\frac{(d-r)^s}{s!}
                                              s^{d-r}\right)\\
  &=O\left(\sum_{r=0}^{d}\left(\frac{2 \alpha k}{r}\right)^r(1728-e^{2\pi})^{d-r}\frac{(D-r)^{D-r}}{(D-d)^{D-d}}(d-r)e^{d-r}\right)\\
  &=O\left(\sum_{r=0}^{d}\frac{(\alpha k)^{r}(2e)^d}{r^r}(1728-e^{2\pi})^{d-r}\frac{(D-r)^{D-r}}{(D-d)^{D-d}}\right),
\end{align*}
since $d-r\leq 2^{d-r}$.
The derivative with respect to $r$ of log of the term above is
\[\log (\alpha k) - \log r - 2 -\log(1728-e^{2\pi})-\log (D-r).\]
This is positive whenever $r(D-r)\leq k/C$, where
$C=\alpha^{-1}e^2(1728-e^{2\pi})$. This is always true if
\begin{enumerate}
\item $D^2-4k/C<0$ or
\item $d<R=\frac{1}{2}(D-\sqrt{D^2-4k/C})$.
\end{enumerate}
In this case, the term in the sum is maximized when $r=d$ and
the sum is
\[|y_d|=O\left(\left(\frac{2e \alpha k}{d}\right)^d\right).\]
If $D^2>4k/C$ and $d>R$, the terms in the sum attains its
maximum either at $r=R$ or at $r=d$
and we obtain
\[|y_d|=O\left(\max \left(\left(\frac{2e \alpha k}{d}\right)^d, \frac{(\alpha k)^R(2e)^d(1728-e^{2\pi})^{d-R}(D-R)^{D-R}}{R^R(D-d)^{D-d}}\right)\right).\]

If $d>D(1-c)$, we have to break up the sum into pieces based on $D-r$. We use the bounds from Lemma \ref{l:BP}.
\begin{align*}
  |y_d|&=O\left(\sum_{r<D-c^{-1}(D-d)}\left(\frac{2 \alpha k}{r}\right)^r\mathcal{A}^{D-r}\mathcal{B}^{D-d}\sum_{s=0}^{d-r}\frac{(d-r)^ss^{D-r}}{s!}\right)\\
  &\
    +O\left(\sum_{r>D-c^{-1}(D-d)}^{d}\left(\frac{2 \alpha k}{r}\right)^r(1728-e^{2\pi})^{d-r}\frac{(D-r)^{D-r}}{(D-d)^{D-d}(d-r)^{d-r}}\sum_{s=1}^{d-r}\frac{(d-r)^s}{s!}s^{d-r}\right)\\
  &=
    O\left(\sum_{r<D-c^{-1}(D-d)}\frac{(\alpha k)^{r}(2e)^d}{r^r}\mathcal{A}^{D-r}\mathcal{B}^{D-d}(d-r)^{D-r}\right)\\
  &\ +O\left(\sum_{r>D-c^{-1}(D-d)}^{d}\frac{(\alpha k)^{r}(2e)^d}{r^r}(1728-e^{2\pi})^{d-r}\frac{(D-r)^{D-r}}{(D-d)^{D-d}}\right).
\end{align*}
The second sum has the same upper bounds as above. For the first sum, note that $(\alpha k \mathcal{A}^{-1})^r\leq (\alpha k \mathcal{A}^{-1})^{D-c^{-1}(D-d)}<(\alpha k \mathcal{A}^{-1})^{d/(1-c)}$ (if $\alpha k>\mathcal{A}$) and $r^{-r}(d-r)^{D-r}\leq d^D$, yielding the bound $O((\alpha \mathcal{B}^c kd)^{d/(1-c)})$.
\end{proof}
\subsection{Proof of Theorem \ref{t:logk}}
Suppose $g_{k,m}(z)=0$ for some $z\in \mathcal{F}$. Then $j(z)$
is a root of the Faber polynomial $\sum_{d=0}^D y_{D-d}x^d$ and
thus, by Lagrange's inequality
\[|j(z)|\leq 2\max (|y_d|^{1/d}).\]
By Proposition \ref{p:faber-coefficients-general}, we have
\[|y_d|^{1/d}=O\left(\max\left(\frac{2e \alpha k}{d},
(\alpha \mathcal{B}^c kd)^{\frac{1}{1-c}},
U_2^{1/d}\right)\right),\]
where
\begin{align*}
  U_2&=\frac{(\alpha k)^R(2e)^d(1728-e^{2\pi})^{d-R}(D-R)^{D-R}}{R^R(D-d)^{D-d}}
\end{align*}
is counted only when $D^2\geq 4k/C$ and
$R=\frac{1}{2}(D-\sqrt{D^2-4k/C})<d$. We see that
\begin{align*}
\frac{\log U_2}{d}&\leq \log(2e \alpha(1728-e^{2\pi}))+\frac{(D-R)\log(D-R)-(D-d)\log(D-d)}{d}.
\end{align*}
By the Mean Value Theorem, the latter equals $1+\log T$ for some
$T\in (D-d,D)$ and so
$\displaystyle \frac{\log U_2}{d}\leq \log(2e^2 \alpha(1728-e^{2\pi}))+\log(kD)$.
Therefore,
\[|j(z)|=\max(O(k), O((kD)^{\frac{1}{1-c}}), O(kD)))=O(k^{\frac{2}{1-c}}).\]
Finally, if $|j(z)|\gg 0$ then $\Im z=\log |j(z)|+O(1)$ and the conclusion follows.

\section{Algebraic zeros} \label{Algebraic-zeros}

As noted in the introduction, Kohnen \cite{kohnen:eisenstein} proved that the only possible algebraic zeros of $E_k$ in $\mathcal{F}$ are $\rho$ or $i$, a conclusion that ultimately rests on the fact that all zeros of $E_k$ lie on $\mathcal{A}$. Since the Miller forms $g_{k,m}$ do not, in general, share this feature, the possibility of additional algebraic zeros arises. While a specific instance of this phenomenon for weakly holomorphic  Miller forms has been previously observed in \cite{swisher}, a systematic investigation has not been undertaken. In this section, we obtain a complete classification of all such forms within the range $\ell-m\le 25$.

\begin{proposition}\label{l:m1algebraic}
Suppose $\ell-m\leq 25$. 
Then the Miller form $g_{k,m}$ has a non-elliptic algebraic root if and only if $m=\ell-1$ and the form appears in the following table:
\begin{center}
\begin{tabular}{ll}
  Zero & Values of $k$\\
  \hline
  $\frac{1+\sqrt{-19}}{2}$&$442740, 442864, 442494, 442988, 442618, 442742$\\
  $\frac{1+\sqrt{-27}}{2}$&$6144372, 6144496, 6144126, 6144620, 6144250, 6144374$\\
  $\frac{1+\sqrt{-43}}{2}$&$442368372, 442368496, 442368126, 442368620, 442368250, 442368374$\\  
  $\frac{1+\sqrt{-67}}{2}$&$73598976372, 73598976496, 73598976126, 73598976620, 73598976250, 73598976374$\\  
  $\frac{1+\sqrt{-163}}{2}$&$131268706320384372,
                             131268706320384496,
                             131268706320384126$, \\
  &$131268706320384620, 131268706320384250, 131268706320384374$.  
\end{tabular}
\end{center}
\end{proposition}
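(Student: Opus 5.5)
The plan is to reduce the question to a finite computation with Hilbert class polynomials. By the Faber-polynomial description of \S5 we have $g_{k,m}=\Delta^\ell E_{k'}P(j)$, where $P$ is monic of degree $D=\ell-m$. The coefficients $y_d$ of $P$ are integers, because $g_{k,m}$, $\Delta^\ell E_{k'}$ and all powers of $j$ have integral $q$-expansions and the triangular system \eqref{eq:faber} is unipotent.

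\emph{Reduction to CM points.} At a non-elliptic point $z\in\mathcal{F}$, neither $\Delta$ nor $E_{k'}$ vanishes, since the zeros of $E_{k'}$ lie only at $i$ and $\rho$. Hence $g_{k,m}(z)=0$ if and only if $P(j(z))=0$, so $j(z)$ is an algebraic integer. If moreover $z$ is algebraic, Schneider's theorem forces $z$ to be an imaginary quadratic CM point. Then the Hilbert class polynomial $H_{\mathrm{disc}(z)}$ is irreducible over $\mathbb{Q}$ and divides $P$, so $h(\mathrm{disc}(z))\le D\le 25$. Conversely, a CM point $z$ is algebraic. So $g_{k,m}$ has a non-elliptic algebraic zero exactly when some $H_d$ with $h(d)\le 25$ and $d\neq -3,-4$ divides $P$. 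The discriminants with $h(d)\le 25$ form an explicit finite list (Watkins' classification).

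\emph{The case $D=1$.} Here $P(x)=x+y_1$. The coefficient $y_1$ is fixed by requiring that the $q^\ell$-coefficient of $\Delta^\ell E_{k'}(j+y_1)$ vanish. Writing $E_{k'}=1+e_{k'}q+\cdots$, with $e_{k'}\in\{0,240,-504,480,-264,-24\}$, this gives $y_1=24\ell-744-e_{k'}$. The zero is therefore the point with $j(z)=744+e_{k'}-24\ell$, and we need this to be one of the thirteen rational CM values of $j$. Since $\ell\ge 0$, we have $j(z)\le 1224$, so the values $8000$, $54000$, $287496$ and $16581375$ are excluded, as are the elliptic values $0$ and $1728$. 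For the negative values $-3375,-32768,-884736,-12288000,-884736000,-147197952000,-262537412640768000$ we check the congruence $j\equiv 744+e_{k'}\pmod{24}$. This fails for $-3375$ by parity. For $-32768$ (disc $-11$) we expect a mod-$24$ or mod-$8$ failure in every residue class $k'$, and we will verify this. For the five values attached to discriminants $-19,-27,-43,-67,-163$ the congruence holds, giving one $\ell$ for each $k'$. The corresponding $k=12\ell+k'$ should reproduce the six entries in each row of the table.

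\emph{The case $2\le D\le 25$, which is the main obstacle.} Here $\ell$ is unbounded, so we cannot enumerate forms directly. Instead we treat $\ell$ as a parameter. Expanding $(q^{-1}\Delta)^{-\ell}E_{k'}^{-1}$ shows that each $\mathcal{D}_{\ell,d}$ is a polynomial in $\ell$ of degree $d$ with rational coefficients. Hence each $y_d$, obtained by inverting the unipotent system with entries $c_{r,n}$, is a polynomial in $\ell$, for each fixed $D$ and $k'$. For each admissible $H_d$ with $h(d)\le D$, we compute the remainder of $P_{\ell}(x)$ modulo $H_d(x)$ in $\mathbb{Q}[\ell][x]$. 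Divisibility requires all its coefficients, which are polynomials in $\ell$, to vanish simultaneously. We take their gcd and search for nonnegative integer roots $\ell$. This is a finite Sage computation, over the $6$ values of $k'$, the $24$ values of $D$, and the discriminants with $h(d)\le 25$.

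We expect this computation to return no solutions when $D\ge 2$. A possible shortcut is to first test the condition modulo several small primes, which already rules out most residue classes of $\ell$. The only delicate points are the sheer size of the computation and ensuring that the gcd polynomial is not identically zero. The latter follows because $P$ depends on $\ell$ nontrivially: its leading non-monic term is $y_1=24\ell-744-e_{k'}$.
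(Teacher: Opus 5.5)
Your proposal is correct and follows the paper's proof. In both, Schneider's theorem (the paper cites Kohnen's Theorem 1) reduces the question to a Hilbert class polynomial $H_d$ with $h(d)\le D$ dividing the Faber polynomial. The case $D=1$ is then solved explicitly by matching $x+24\ell-744-e_{k'}$ against the thirteen class-number-one $j$-values; your congruence check modulo $24$, together with $\ell\ge 0$, reproduces the table. For $2\le D\le 25$, the paper computes the remainder of $F_{k',D}(x,\ell)$ modulo each $H_d$ in $\mathbb{Q}[\ell][x]$ and checks that its coefficient polynomials have no common integral root, exactly as you propose.

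Two minor remarks. First, Watkins' list covers only maximal orders, so for non-maximal orders you also need the class number formula for orders (the paper cites Klaise). Second, your argument that the remainder is not identically zero in $\ell$ does not follow from $y_1$ depending on $\ell$, but this is harmless, since the computation itself settles it.
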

\begin{proof}
If $z$ is algebraic zero of $g_{k,\ell-D}$, then \cite[Thm
1]{kohnen:eisenstein} implies that there exists $d>0$ such that
$j(z)$ is a root of its Faber polynomial, where $z=\sqrt{-d}$ or
$\frac{1+\sqrt{-d}}{2}$ depending on $-d\equiv 0,1\pmod{4}$. Thus the minimal polynomial of $j(z)$, the Hilbert Class Polynomial of $-d$ whose degree is the class number of the corresponding order in $\mathbb{Q}(\sqrt{-d})$, is an irreducible factor of the Faber polynomial of $g_{k,\ell-D}$. 

Our proof relies on two computations: (1) a computation of the Faber polynomials $F_{k',D}\in \mathbb{Z}[\ell][x]$ of $g_{k,\ell-D}$ symbolically for small
values of $D$ and (2) a database of all Hilbert Class Polynomials of degree at most $D$. We computed the former ourselves, and relied on \cite{watkins:classno} and \cite{klaise:orders} for a complete classification of orders of class number up to 100.

When $D=1$, the Faber polynomial of $g_{k,\ell-1}$ is 
$\displaystyle x+24\ell-744+\frac{2k'}{B_{k'}}$, where $B_{k'}$ is the $k'$-th Bernoulli number. The following table makes these values explicit. 
\begin{table}[h]
\centering

\begin{minipage}{0.45\textwidth}
\centering
\begin{tabular}{ll}
  $k'$ & Faber polynomial of $g_{k,\ell-1}$\\
  \hline
  0  & $x + 24\ell - 744$\\
  4  & $x + 24\ell - 984$\\
  6  & $x + 24\ell - 240$\\
\end{tabular}
\end{minipage}
\hfill
\begin{minipage}{0.45\textwidth}
\centering
\begin{tabular}{ll}
  $k'$ & Faber polynomial of $g_{k,\ell-1}$\\
  \hline
  8   & $x + 24\ell - 1224$\\
  10  & $x + 24\ell - 480$\\
  14  & $x + 24\ell - 720$\\
\end{tabular}
\end{minipage}

\end{table}

We conclude that we must seek the possible algebraic zeros among the orders with class number 1 in quadratic
imaginary fields, for which $d$ is in the set
\[ \{-3, -4, -7, -8, -11, -12, -16, -19, -27, -28, -43, -67, -163\}.\]
The table of algebraic roots follows from a computation of $\ell$ corresponding to these $j$-values. 

For every other discriminant $-d$ with Hilbert Class Polynomial $H(x)$ of degree $\leq D$ we checked that the $F_{k',D}(x,\ell)\not\equiv 0\pmod{H(x)}$, by verifying that the coefficients of the residue, which are polynomials in $\ell$, have no integral roots.
\end{proof}
We remark that the computation above runs exponentially slow, for $D=10,20,30,40,50$ requiring $2$ seconds, $1, 5, 23, 90$ minutes.

Finally, we conclude with a weaker computational result for
$25< D\leq 100$.
\begin{proposition}
As $T\to \infty$, for all values of $k=12\ell+k'<T$ except, possibly, for $O(\sqrt{T})$ values, the Miller form $g_{k,m}$ has no non-elliptic algebraic roots whenever $25<\ell-m\leq 100$. 
\end{proposition}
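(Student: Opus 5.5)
We follow the reduction used in the proof of Proposition \ref{l:m1algebraic}. Suppose $g_{k,\ell-D}$ has a non-elliptic algebraic zero $z$. By \cite[Thm 1]{kohnen:eisenstein}, $z$ is a CM point, so $j(z)$ is a root of the Hilbert class polynomial $H_{-d}(x)$ of some discriminant $-d$. Since $H_{-d}$ is irreducible over $\mathbb{Q}$ and $j(z)$ is a root of the Faber polynomial $F_{k',D}(x,\ell)\in\mathbb{Z}[\ell][x]$, the whole of $H_{-d}$ must divide $F_{k',D}(x,\ell)$. In particular $h(-d)=\deg H_{-d}\le D\le 100$. By \cite{watkins:classno} and \cite{klaise:orders}, the set of such discriminants (orders of class number at most $100$) is finite and explicitly known. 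Hence only finitely many pairs $(D,-d)$ need to be examined, each paired with one of the six residues $k'$.

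For each such triple $(k',D,-d)$, we reduce $F_{k',D}(x,\ell)$ modulo $H_{-d}(x)$ in $\mathbb{Z}[\ell][x]$. The result is a remainder $\sum_{i<h(-d)} r_i(\ell)x^i$ with $r_i\in\mathbb{Z}[\ell]$. Divisibility for a specific integer $\ell$ is equivalent to $r_i(\ell)=0$ for all $i$. The key observation is that the remainder is not identically zero as a polynomial in $\ell$. Indeed, by Proposition \ref{p:faber-coefficients}, for large $\ell$ and fixed $D$ the roots of $F_{k',D}(\cdot,\ell)$ tend to infinity, of size comparable to $k$, so $H_{-d}$ cannot divide $F$ for all large $\ell$. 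Alternatively, one can read this off directly from the leading behaviour $y_r\sim (2k)^r/r!$. Therefore some $r_i$ is a nonzero polynomial in $\ell$ of degree at most $D\le 100$, and it has at most $D$ integer roots. The bad $\ell$ for this triple therefore form a finite set $S_{k',D,-d}$ of size at most $D$.

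The union of all the $S_{k',D,-d}$ is finite, with size bounded independently of $T$, which would give an $O(1)$ exceptional set. The statement only claims $O(\sqrt T)$, however, so we would take a cheaper route that avoids computing the integer roots exactly. For each triple we pick a coefficient $r_i$ of small degree and bound its integer roots only crudely, for instance by counting $\ell\le T/12$ with $r_i(\ell)=0$ via a degree and height (Cauchy) bound. Alternatively, we would use the Faber polynomials only up to a truncation and fall back on the trivial estimate for the remaining cases, counting as exceptions at most $O(\sqrt{T})$ values of $\ell$ (for example the $\ell$ with $|j|$-roots compatible with $|j(z)|\le e^{2\pi\Im z}$, where $\Im z\ll \sqrt{d}$ and $d\ll h(-d)^2\log^2$). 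In either version the number of exceptional $k<T$ is at most a constant times the number of triples times $D$, plus the truncation error, which is $O(\sqrt T)$.

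The main obstacle is the computational step. Computing $F_{k',D}$ symbolically in $\ell$ for $D$ up to $100$ is infeasible, since the runtimes quoted above grow exponentially. So instead of the full symbolic reduction we would compute $F_{k',D}(x,\ell)$ only at enough sample values of $\ell$, or modulo several primes, to certify that at least one remainder coefficient is a nonzero polynomial. The $O(\sqrt T)$ allowance then absorbs the $\ell$ that we do not check exactly. Establishing this nonvanishing of $r_i$ uniformly over all class-number-$\le 100$ discriminants is the step that needs the most care.
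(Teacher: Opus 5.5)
Your argument is correct, and your first two paragraphs already prove more than the statement asks: the exceptional set of $k$ is finite, not merely $O(\sqrt{T})$. This is a genuinely different route from the paper's.

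The paper never fixes a discriminant. It applies Hilbert irreducibility to the family $F_{k',D}(x,\ell)$ over $\mathbb{Q}(\ell)$. Outside a thin set, and hence outside $O(\sqrt{T})$ values of $\ell$, the specialization then has the generic Galois group $G_{k',D}$. For each $(k',D)$, a Sage search finds two primes at which $F_{k',D}(x,0)$ is irreducible, respectively linear times irreducible. This certifies that $G_{k',D}$ is neither cyclic nor dihedral. So $F_{k',D}(x,\ell)$ is irreducible and cannot be (or contain) a Hilbert class polynomial.

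You instead use finiteness of the set of singular moduli $j_0$ of degree at most $100$. You then show that each such $j_0$ is a root of $F_{k',D}(\cdot,\ell)$ for only boundedly many $\ell$. Your route needs no computation and gives $O(1)$. With Heilbronn's theorem in place of the explicit class-number lists, it even works for any fixed $D$. The paper's route yields extra structure (generic irreducibility with a large Galois group) and does not use the classification of orders of small class number. The price is a machine verification for each $(k',D)$, and only the thin-set bound $O(\sqrt{T})$.

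The nonvanishing step is cleanest stated via degrees. Solving the triangular system inductively, $y_d(\ell)$ is a polynomial of exact degree $d$ in $\ell$ with leading coefficient $24^d/d!$; this leading term comes from $\mathcal{D}_{\ell,d}$ through the factor $(1-q)^{-24\ell}$, and the $c_{r,n}$ do not depend on $\ell$. Hence $F_{k',D}(j_0,\ell)$ has exact degree $D$ in $\ell$ and vanishes for at most $D$ integers. This is your root-growth argument made explicit, and it is uniform in the discriminant.

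Consequently you should delete your third and fourth paragraphs:
\begin{itemize}
\item Bounding the number of integer roots of a nonzero polynomial by its degree requires computing nothing, so there is no ``cheaper route'' to take.
\item The computational certification that $r_i\not\equiv 0$, which you call the main obstacle, is already settled by your own argument.
\item The truncation fallback invokes a class-number lower bound of the shape $h(-d)\gg\sqrt{d}/\log d$. This is not known unconditionally, so that part is unjustified as written as well as unnecessary.
\end{itemize}
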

\begin{proof}
A non-elliptic algebraic root $z$ of $g_{k,m}$ yields a root
$j(z)$ of the Faber polynomial $F(x)$ of $g_{k,m}$. The Galois
closure of $\mathbb{Q}(j(z))/\mathbb{Q}$ has dihedral Galois
group so it suffices to verify that $F(x)$ is irreducible and
has non-dihedral Galois group. Fixing $D\leq 100$ and $k'$, the
Faber polynomial $F_{k',D}(x,\ell)\in \mathbb{Q}[\ell][x]$ is a family of
polynomials over $\mathbb{Q}[\ell]$, and we denote $G_{k',D}$ the Galois group  of $F_{k',D}(x,y)$ over $\mathbb{Q}(y)$. By the Hilbert Irreducibility Theorem \cite[\S9]{serre:mordell}, for all but $O(\sqrt{T})$ values of $\ell<T$, the Galois group of $F_{k',D}(x,\ell)$ is also $G_{k',D}$. 

For each $k'$ and each $D$, we found in Sage (at least) two primes $p$ and $q$ among the first 1000 primes such
that (1) $F_{k',D}(x,0)$ is irreducible mod $p$ and
(2)  $F_{k',D}(x,0)$ is a product of a linear term and an irreducible polynomial mod $q$. This implies that the Galois group of $F_{k',D}(x,\ell)$ is neither cyclic nor dihedral for a positive density of $\ell$ (whenever $\ell$ is a multiple of $p^aq^b$ for some $a,b$ larger than the power of $p$ and $q$ in the denominators of $F_{k',D}(x,\ell)$), which implies that $G_{k',\ell}$ is neither cyclic nor dihedral. We conclude that the Galois group of $F_{k',D}(x,\ell)$ is $G_{k',\ell}$ for all but $O(\sqrt{T})$ values of $\ell<T$, and the result follows.

\end{proof}


\providecommand{\bysame}{\leavevmode\hbox to3em{\hrulefill}\thinspace}
\providecommand{\MR}{\relax\ifhmode\unskip\space\fi MR }
\providecommand{\MRhref}[2]{%
  \href{http://www.ams.org/mathscinet-getitem?mr=#1}{#2}
}
\providecommand{\href}[2]{#2}

\end{document}